\documentclass[11pt,letterpaper,reqno]{amsart}

\usepackage{tikz}
\usetikzlibrary{positioning, shapes.geometric, arrows.meta}
\usepackage{amssymb}
\usepackage{amsmath}
\usepackage{amsthm}
\usepackage{amsfonts}
\usepackage{bbm}
\usepackage{enumitem}
\usepackage{pgfplots}
\pgfplotsset{compat=1.18}
\usepackage{booktabs}

\usepackage{graphicx}
\usepackage[T1]{fontenc}
\usepackage{doi}
\usepackage{float}
\usepackage{bookmark}
\usepackage{hyperref}
\hypersetup{pdfstartview={FitH}}

\newtheorem{thm}{Theorem}[section]
\newtheorem{lem}[thm]{Lemma}
\newtheorem{prop}[thm]{Proposition}
\newtheorem{cor}[thm]{Corollary}
\newtheorem{prob}[thm]{Problem}
\newtheorem{ques}[thm]{Question}
\newtheorem{conj}[thm]{Conjecture}
\theoremstyle{definition}
\newtheorem{eg}[thm]{Example}
\newtheorem{rem}[thm]{Remark}

\numberwithin{equation}{section}

\newcommand{\Mn}{\mathbb{M}_n}
\newcommand{\C}{\mathbb{C}}
\newcommand{\R}{\mathbb{R}}
\newcommand{\Id}{I}
\newcommand{\opnorm}[1]{\left\lVert #1 \right\rVert_{\infty}}
\newcommand{\hsnorm}[1]{\left\lVert #1 \right\rVert_{2}}
\newcommand{\tr}{\operatorname{Tr}}
\newcommand{\diag}{\operatorname{diag}}

\newcommand{\absv}[1]{\left|#1\right|}
\newcommand{\syma}[1]{\left|#1\right|_{\mathrm{sym}}}
\newcommand{\qsma}[1]{\left|#1\right|_{\mathrm{qsym}}}

\allowdisplaybreaks[4]
\begin{document}
	
	\title[symmetric moduli and triangle inequalities]{Operator symmetric moduli and sharp triangle inequalities}
	
	\author[T.~Zhang]{Teng Zhang}
	\address{School of Mathematics and Statistics, Xi'an Jiaotong University, Xi'an 710049, P. R. China}
	\email{teng.zhang@stu.xjtu.edu.cn}
	
	\subjclass[2020]{15A60, 47A30, 15A42}
	\keywords{unitarily invariant norms; operator symmetric moduli; triangle inequality; Frobenius norm; Thompson's inequality}

\begin{abstract}
	We compare the usual operator modulus with two symmetrized variants, the  arithmetic symmetric modulus and the quadratic symmetric modulus. For every unitarily invariant norm, we determine sharp equivalence constants among these three moduli. We also establish sharp triangle-type inequalities for unitarily invariant norms, controlling sums of matrices by sums of symmetrized moduli, including optimal Schatten $p$-norm bounds and a phase transition phenomenon for the quadratic version. Explicit low-dimensional examples are provided to show that the constants are best possible. In particular, we answer two questions posed by Bourin and Lee in \cite{BL26b}.
\end{abstract}

	\maketitle
	\tableofcontents
	
	\section{Introduction}
	
	\subsection{Operator moduli and their equivalence}
	
	For a complex number $z=\Re z+i\,\Im z\in\C$, the modulus is \emph{classically} defined by
	\begin{equation}\label{eq:z-def-1}
		|z|:=\sqrt{(\Re z)^2+(\Im z)^2},
	\end{equation}
	which immediately implies
	\begin{equation}\label{eq:z-def-2}
		|z|=(\bar z z)^\frac{1}{2}
	\end{equation}
	and
	\begin{equation}\label{eq:z-def-3}
		|z|=\frac{|z|+|\bar{z}|}{2}.
	\end{equation}
	
Let $\mathbb{M}_{n_1,n_2}$ denote the set of all $n_1\times n_2$ complex matrices. When $n_1=n_2=n$, we write $\mathbb{M}_n$. For $Z\in\mathbb{M}_n$, the \emph{usual modulus}, in analogy with \eqref{eq:z-def-2}, is defined by
\[
|Z| := (Z^*Z)^{\frac{1}{2}},
\]
where $Z^*$ denotes the conjugate transpose of $Z$.

On the other hand, writing the Cartesian decomposition $Z=\Re Z+i\,\Im Z$, where
\[
\Re Z := \frac{Z+Z^*}{2},\qquad \Im Z := \frac{Z-Z^*}{2i},
\]
a more ``geometric” analogue of \eqref{eq:z-def-1} motivates the definition of the \emph{quadratic symmetric modulus}:
\[
|Z|_{\mathrm{qsym}} := \sqrt{(\Re Z)^2+(\Im Z)^2}
= \left(\frac{|Z|^2+|Z^*|^2}{2}\right)^{\frac{1}{2}}.
\]
We also define the \emph{arithmetic symmetric modulus}, following \eqref{eq:z-def-3}, by
\[
|Z|_{\mathrm{sym}} := \frac{|Z|+|Z^*|}{2}.
\]
	
	A norm $\|\cdot\|$ on $\Mn$ is called \emph{unitarily invariant} (or \emph{symmetric}) if
	\[
	\|UZV\|=\|Z\|\quad\text{for all unitaries }U,V.
	\]
	Our first objective is to characterize the equivalence among the three moduli introduced above, that is, to determine the \emph{sharp constants} $C_1,C_2$ such that
	\[
	C_1\,\bigl\|\,|Z|_1\,\bigr\|\ \le\ \bigl\|\,|Z|_2\,\bigr\|\ \le\ C_2\,\bigl\|\,|Z|_1\,\bigr\|
	\quad\text{for all } Z\in\Mn,
	\]
	where $|\cdot|_1,|\cdot|_2\in\{\,|\cdot|,\ |\cdot|_{\mathrm{sym}},\ |\cdot|_{\mathrm{qsym}}\,\}$ are two distinct choices of modulus.
	
	We state our main results as follows.	
	
	\begin{thm}[Usual vs arithmetic symmetric modulus]\label{thm:abs-vs-sym}
		For every unitarily invariant norm $\|\cdot\|$ on $\Mn$ and every $Z\in\Mn$,
		\[
		\frac12\,\bigl\||Z|\bigr\|\ \le\ \bigl\||Z|_{\mathrm{sym}}\bigr\|\ \le\ \bigl\||Z|\bigr\|.
		\]
		Moreover, both constants $\tfrac12$ and $1$ are sharp.
	\end{thm}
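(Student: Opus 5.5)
The upper bound follows from the triangle inequality and unitary invariance. Since $\|\cdot\|$ is a norm,
\[
\bigl\||Z|_{\mathrm{sym}}\bigr\|\le \tfrac12\bigl(\||Z|\|+\||Z^*|\|\bigr).
\]
Write the polar decompositions $Z=U|Z|$ and $Z^*=U^*|Z^*|$ with $U$ unitary. Then $|Z^*|=U|Z|U^*$, so $|Z|$ and $|Z^*|$ have the same singular values. Unitary invariance gives $\||Z^*|\|=\||Z|\|=\|Z\|$, and the upper bound follows. Sharpness of the constant $1$ is immediate from any normal $Z$, for instance $Z=I$, since then $|Z|=|Z^*|$ and $|Z|_{\mathrm{sym}}=|Z|$.

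For the lower bound I use positivity. Both $|Z|$ and $|Z^*|$ are positive semidefinite, so $|Z|\le |Z|+|Z^*|$ in the Löwner order. Hence the eigenvalues satisfy $\lambda_j(|Z|)\le \lambda_j(|Z|+|Z^*|)$ for every $j$, by Weyl's monotonicity principle. For positive semidefinite matrices the singular values are the eigenvalues, so the singular values of $|Z|+|Z^*|$ dominate those of $|Z|$ entrywise. By the Fan dominance theorem, or simply because a unitarily invariant norm is a monotone symmetric gauge function of the singular values, this gives
\[
\||Z|\|\le \bigl\||Z|+|Z^*|\bigr\| = 2\bigl\||Z|_{\mathrm{sym}}\bigr\|.
\]
This is the left inequality.

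The main task is sharpness of $\tfrac12$, which must hold for every unitarily invariant norm on $\Mn$ (with $n\ge 2$). I would take the nilpotent matrix $Z=E_{12}$. Then $|Z|=E_{22}$ and $|Z^*|=E_{11}$, so $|Z|_{\mathrm{sym}}=\tfrac12(E_{11}+E_{22})$. This gives $\||Z|\|=\|E_{11}\|$ and $\||Z|_{\mathrm{sym}}\|=\tfrac12\|E_{11}+E_{22}\|$, and the ratio is $\|E_{11}+E_{22}\|/(2\|E_{11}\|)$. This equals $\tfrac12$ only for norms such as the operator norm, where $\|E_{11}+E_{22}\|=\|E_{11}\|$. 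For the trace norm the ratio is $1$.

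So sharpness should be read as: the constant $\tfrac12$ cannot be improved uniformly over all unitarily invariant norms, and the operator norm together with $Z=E_{12}$ attains it. If the author intends sharpness for each fixed norm, the example above does not suffice. The trace norm, for instance, has $\||Z|_{\mathrm{sym}}\|_1=\||Z|\|_1$ exactly, so the best constant there is $1$. I therefore interpret sharpness as uniform over norms and present the operator-norm example. I expect this interpretive point, rather than any computation, to be the main subtlety.
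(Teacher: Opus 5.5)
Your proof is correct and follows the paper's argument essentially verbatim. The upper bound comes from the triangle inequality together with $\||Z^*|\|=\||Z|\|$, with equality for normal $Z$. The lower bound comes from $|Z|\le |Z|+|Z^*|$ and the monotonicity of unitarily invariant norms, and $Z=E_{12}$ under the operator norm shows that $\tfrac12$ is sharp. Your reading of sharpness as uniform over the class of unitarily invariant norms is also how the paper proceeds, since it too certifies $\tfrac12$ only with the operator norm; as you note, $\tfrac12$ is not attained for the trace norm.
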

	
	\begin{thm}[Usual vs quadratic symmetric modulus]\label{thm:abs-vs-qsym}
		For every unitarily invariant norm $\|\cdot\|$ on $\Mn$ and every $Z\in\Mn$,
		\[
		\frac{\sqrt2}2\,\bigl\||Z|\bigr\|\ \le\ \bigl\||Z|_{\mathrm{qsym}}\bigr\|\ \le\ \sqrt2\,\bigl\||Z|\bigr\|.
		\]
		Moreover, both constants $\frac{\sqrt{2}}{2}$ and $\sqrt2$ are sharp.
	\end{thm}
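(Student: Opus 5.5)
Both inequalities will follow from comparing singular values, via Fan dominance, with the quantity $|Z|_{\mathrm{qsym}}^2=\frac{|Z|^2+|Z^*|^2}{2}$. Recall that $|Z|$ and $|Z^*|$ are unitarily similar, since $|Z^*|=U|Z|U^*$ for the unitary $U$ in the polar decomposition $Z=U|Z|$. Hence they have the same eigenvalues $s_1\ge\dots\ge s_n$, namely the singular values of $Z$. Consequently $\||Z^*|\|=\||Z|\|$, and likewise $\||Z|^2\|=\||Z^*|^2\|$ for every unitarily invariant norm.

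\textbf{Upper bound.} The eigenvalues of $|Z|_{\mathrm{qsym}}$ are the square roots of those of $A:=\frac{|Z|^2+|Z^*|^2}{2}$. By Ky Fan's maximum principle,
\[
\sum_{j\le k}\lambda_j(A)\le\frac12\Bigl(\sum_{j\le k}s_j^2+\sum_{j\le k}s_j^2\Bigr)=\sum_{j\le k}s_j^2 .
\]
This gives only weak majorization of the squares, and taking square roots does not preserve weak majorization. So I will use a cruder but correct bound: $A\le |Z|^2+|Z^*|^2$, whose $k$-th eigenvalue is at most $2s_{\lceil k/2\rceil}^2$ by Weyl's inequality $\lambda_{i+j-1}(X+Y)\le\lambda_i(X)+\lambda_j(Y)$. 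Hence $\lambda_k(|Z|_{\mathrm{qsym}})\le\sqrt2\,s_{\lceil k/2\rceil}$.

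The resulting sequence $(\sqrt2 s_1,\sqrt2 s_1,\sqrt2 s_2,\sqrt2 s_2,\dots)$ is not weakly majorized by $\sqrt2(s_1,\dots,s_n)$. So this route fails, and this is the main obstacle.

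Instead I will use the Bourin–Lee-type unitary orbit decomposition: for positive $X,Y$ there exist unitaries $U,V$ with
\[
\frac{X+Y}{2}\le \frac{UXU^*+VYV^*}{2}\quad\text{with }U,V\text{ chosen suitably}.
\]
In its concave form, for $f(t)=\sqrt t$ this yields $f\bigl(\frac{X+Y}{2}\bigr)\le \frac{Uf(X)U^*+Vf(Y)V^*}{2}$ (Bourin–Uchiyama). Applied to $X=|Z|^2$ and $Y=|Z^*|^2$, it gives
\[
|Z|_{\mathrm{qsym}}\le \frac{U|Z|U^*+V|Z^*|V^*}{2},
\]
and the triangle inequality then gives $\||Z|_{\mathrm{qsym}}\|\le\||Z|\|$. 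That is even better than $\sqrt2$, which cannot be right given the claimed sharpness. Checking the subadditivity statement more carefully, the Bourin–Uchiyama inequality $f(X+Y)\le Uf(X)U^*+Vf(Y)V^*$ holds for concave $f$ with $f(0)=0$, with no averaging. Therefore
\[
|Z|_{\mathrm{qsym}}=\frac{1}{\sqrt2}\sqrt{X+Y}\le\frac1{\sqrt2}\bigl(U|Z|U^*+V|Z^*|V^*\bigr),
\]
and the norm is at most $\frac{2}{\sqrt2}\||Z|\|=\sqrt2\||Z|\|$. This is the upper bound.

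\textbf{Lower bound.} Since $|Z|_{\mathrm{qsym}}^2\ge\frac12|Z|^2$, operator monotonicity of the square root gives $|Z|_{\mathrm{qsym}}\ge\frac1{\sqrt2}|Z|$. Eigenvalue monotonicity then yields $\||Z|_{\mathrm{qsym}}\|\ge\frac{\sqrt2}{2}\||Z|\|$.

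\textbf{Sharpness.} Take $Z=E_{12}\in\mathbb M_2$. Then $|Z|=E_{22}$, $|Z^*|=E_{11}$, and $|Z|_{\mathrm{qsym}}=\frac1{\sqrt2}I$. For the trace norm, $\||Z|_{\mathrm{qsym}}\|_1=\sqrt2=\sqrt2\,\||Z|\|_1$, so the upper constant is attained. For the operator norm, $\||Z|_{\mathrm{qsym}}\|_\infty=\frac1{\sqrt2}=\frac{\sqrt2}{2}\||Z|\|_\infty$, so the lower constant is attained. Padding with zeros extends both examples to all $n\ge2$.
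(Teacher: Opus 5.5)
Your final argument is correct and is essentially the paper's proof. The lower bound uses operator monotonicity of $t^{1/2}$ applied to $|Z|_{\mathrm{qsym}}^2\ge\frac12|Z|^2$. The upper bound uses Bourin--Uchiyama subadditivity of $t^{1/2}$ applied to $|Z|^2$ and $|Z^*|^2$, followed by the triangle inequality and $\|\,|Z^*|\,\|=\|\,|Z|\,\|$. The paper uses the norm form $\|f(X+Y)\|\le\|f(X)+f(Y)\|$ and passes through $\sqrt2\,\|\,|Z|_{\mathrm{sym}}\|$ (Theorems~\ref{thm:sym-vs-qsym} and~\ref{thm:abs-vs-sym}), whereas you use the unitary-orbit form. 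Sharpness uses $E_{12}$ with the trace and operator norms in both.

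In the write-up, delete the abandoned Ky Fan/Weyl attempts. Above all, delete the averaged ``concave form'' $f\bigl(\tfrac{X+Y}{2}\bigr)\le\tfrac{Uf(X)U^*+Vf(Y)V^*}{2}$: it is false outright, not merely at odds with sharpness. For $Z=E_{12}$ its left side is $\tfrac{1}{\sqrt2}I_2$, of trace $\sqrt2$, while the right side has trace $1$. Also quote the unitary-orbit subadditivity with its actual hypothesis, namely $f\ge0$ concave on $[0,\infty)$.
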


	\begin{thm}[Arithmetic vs quadratic symmetric modulus]\label{thm:sym-vs-qsym}
		For every unitarily invariant norm $\|\cdot\|$ on $\Mn$ and every $Z\in\Mn$,
		\[
		\bigl\||Z|_{\mathrm{sym}}\bigr\|\ \le\ \bigl\||Z|_{\mathrm{qsym}}\bigr\|\ \le\ \sqrt2\,\bigl\||Z|_{\mathrm{sym}}\bigr\|.
		\]
		Moreover, both constants $1$ and $\sqrt2$ are sharp.
	\end{thm}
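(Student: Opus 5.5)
We must prove Theorem~\ref{thm:sym-vs-qsym}. Write $A=|Z|$ and $B=|Z^*|$; both are positive semidefinite and unitarily similar, since $|Z^*|=U|Z|U^*$ for a unitary $U$ from the polar decomposition. Then
\[
\syma{Z}=\frac{A+B}{2},\qquad \qsma{Z}=\left(\frac{A^2+B^2}{2}\right)^{1/2}.
\]
The whole problem becomes a comparison between the arithmetic mean and the quadratic mean of two positive matrices.

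\emph{Lower bound.} The function $t\mapsto t^2$ is operator convex on $[0,\infty)$, so
\[
\left(\frac{A+B}{2}\right)^2\le \frac{A^2+B^2}{2}.
\]
Since $t\mapsto t^{1/2}$ is operator monotone, we get $\frac{A+B}{2}\le \qsma{Z}$ in the Löwer order. Löwner order between positive matrices implies the eigenvalue inequalities $\lambda_j(\syma{Z})\le\lambda_j(\qsma{Z})$ by Weyl monotonicity. By Fan dominance this gives $\|\syma{Z}\|\le\|\qsma{Z}\|$ for every unitarily invariant norm.

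\emph{Upper bound.} A naive Löwner comparison fails here: $\frac{A^2+B^2}{2}\le (A+B)^2$ is false in general for noncommuting $A,B$. So we work with singular values instead. Consider the $2n\times 2n$ matrix
\[
X=\frac{1}{\sqrt2}\begin{pmatrix} A & B\\ 0&0\end{pmatrix},
\]
so that
\[
XX^*=\frac{A^2+B^2}{2}\oplus 0.
\]
Hence the nonzero singular values of $X$ are those of $\qsma{Z}$, padded with zeros. On the other hand, $X=\frac1{\sqrt2}\bigl(\begin{smallmatrix} A&0\\0&0\end{smallmatrix}\bigr)+\frac1{\sqrt2}\bigl(\begin{smallmatrix} 0&B\\0&0\end{smallmatrix}\bigr)$, and the second summand has the same singular values as $B\oplus 0$.

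Instead of this splitting, I would use the cleaner route through Bourin's type result: for positive $A,B$ there exist unitaries $U,V$ with
\[
\begin{pmatrix} A&B\\0&0\end{pmatrix}\text{ having } |X^*|\ \text{unitarily congruent to}\ \cdots
\]
This gets messy, so the simpler plan is the following. Note that $\begin{pmatrix}A\\B\end{pmatrix}$ has modulus $(A^2+B^2)^{1/2}$. Writing
\[
\begin{pmatrix}A\\B\end{pmatrix}=\begin{pmatrix}A^{1/2}&0\\0&B^{1/2}\end{pmatrix}\begin{pmatrix}A^{1/2}\\B^{1/2}\end{pmatrix}
\]
does not help directly either. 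Instead, use the known fact (Bourin–Lee) that for positive $A,B$,
\[
\left\|(A^2+B^2)^{1/2}\right\|\le\|A+B\|
\]
for all unitarily invariant norms. This holds because $\begin{pmatrix}A&B\\0&0\end{pmatrix}$ is a sum of two matrices whose modulus-squared stack can be compared with $\begin{pmatrix}A^{1/2}&B^{1/2}\\0&0\end{pmatrix}^{*}$-type factorizations.

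More concretely, let $T=\begin{pmatrix}A^{1/2}&B^{1/2}\\0&0\end{pmatrix}$. Then
\[
TT^*=(A+B)\oplus0,\qquad T^*T=\begin{pmatrix}A&A^{1/2}B^{1/2}\\B^{1/2}A^{1/2}&B\end{pmatrix}.
\]
Using $\|(T^*T)^{2}\|$-type pinching, the diagonal blocks of $(T^*T)^2$ dominate via pinching: the pinching of $(T^*T)^2$ is
\[
\bigl(A^2+A^{1/2}BA^{1/2}\bigr)\oplus\bigl(B^2+B^{1/2}AB^{1/2}\bigr)\ \ge\ A^2\oplus B^2.
\]
For the square-root norms we apply Fan dominance together with the fact that $\|\,\mathcal{C}(P)^{1/2}\|\le\|P^{1/2}\|$ for a pinching $\mathcal C$ (by concavity of $t^{1/2}$ and the majorization $\lambda(\mathcal C(P))\prec\lambda(P)$). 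This gives $\|(A^2\oplus B^2)^{1/2}\|\le\|T^*T\|=\|A+B\|$. Finally, $(A^2+B^2)^{1/2}$ is, up to unitary equivalence, the modulus of $\begin{pmatrix}A&B\end{pmatrix}$. Its singular values are those of $\begin{pmatrix}A\\B\end{pmatrix}^*$, and $\|\,|(A\ B)|\,\|\le\|A\oplus B\|$ fails in general. The safe comparison is therefore with the factor $\sqrt2$ lost elsewhere: use $\|(A^2+B^2)^{1/2}\|\le\|A+B\|$ from Bourin–Lee (the main obstacle, which I would verify via the $2\times2$ block argument above). Dividing by the normalizations gives
\[
\|\qsma{Z}\|=\tfrac1{\sqrt2}\|(A^2+B^2)^{1/2}\|\le\tfrac{1}{\sqrt2}\|A+B\|=\sqrt2\,\|\syma{Z}\|.
\]

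\emph{Sharpness.} For the constant $1$, take $Z$ normal, so $A=B$ and the two moduli coincide. For the constant $\sqrt2$, take $Z=E_{12}$ and the operator norm. Then $A=E_{22}$ and $B=E_{11}$, so $\syma{Z}=\frac12 I$ with norm $\frac12$. Also $\qsma{Z}=\frac1{\sqrt2}I$ with norm $\frac1{\sqrt2}$, and the ratio is $\sqrt2$.
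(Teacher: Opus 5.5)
Your lower bound and your sharpness examples match the paper. Operator convexity of $t^2$ followed by operator monotonicity of $t^{1/2}$ is an elementary route to the same L\"owner inequality $|Z|_{\mathrm{sym}}\le |Z|_{\mathrm{qsym}}$ that the paper gets from operator concavity of $t^{1/2}$. Normal $Z$ and $E_{12}$ are exactly the paper's extremizers. For the upper bound, the inequality you finally rely on is $\|(A^2+B^2)^{1/2}\|\le\|A+B\|$ for $A,B\ge 0$. This is precisely what the paper uses: Bourin--Uchiyama subadditivity $\|f(X+Y)\|\le\|f(X)+f(Y)\|$ (Lemma~\ref{lem:Bourin-Uchiyama}), with $f(t)=t^{1/2}$, $X=|Z|^2$, $Y=|Z^*|^2$. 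It also follows from the Ando--Zhan theorem, since $t^{1/2}$ is operator monotone. If you cite it this way, your proof coincides with the paper's.

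However, the block-matrix argument you say you would use to verify this inequality does not prove it, and you should drop it. The auxiliary claim $\|\mathcal{C}(P)^{1/2}\|\le\|P^{1/2}\|$ for a pinching $\mathcal{C}$ is false. The majorization $\lambda(\mathcal{C}(P))\prec\lambda(P)$ combined with a \emph{concave} function gives the reverse inequality for the trace. For example, $P=\bigl(\begin{smallmatrix}1&1\\1&1\end{smallmatrix}\bigr)$ gives $\tr\,\mathcal{C}(P)^{1/2}=2>\sqrt2=\tr P^{1/2}$. What your construction actually yields is $\|A\oplus B\|\le\|A+B\|$, obtained directly by pinching $T^*T$, whose diagonal blocks are $A$ and $B$. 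As you observe yourself, $\|(A^2+B^2)^{1/2}\|\le\|A\oplus B\|$ fails: take $A=B=I$ in the operator norm. So this route cannot close the argument. The upper estimate rests on the concave-function subadditivity theorem, not on a pinching computation, and your write-up should present it as a citation rather than promise a verification via the $2\times 2$ block.
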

	
	\subsection{Operator triangle inequality}\label{subsec:operator-triangle}
	
	As an operator analogue of the scalar triangle inequality
	\[
	|z_1+z_2|\le |z_1|+|z_2|\qquad (z_1,z_2\in\C),
	\]
	the celebrated Thompson's inequality~\cite{Tho76} asserts the following.
	
	\begin{thm}[Thompson]\label{thm:thompson}
		Let $X,Y\in\Mn$. Then there exist unitaries $U,V$ such that
		\[
		|X+Y|\le U|X|U^*+V|Y|V^*.
		\]
	\end{thm}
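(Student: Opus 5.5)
The statement immediately above is Thompson's theorem (Theorem~\ref{thm:thompson}). I would prove it by reducing it to a singular-value majorization and then realizing the domination through the min--max principle.

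\textbf{Reduction.} Write $s_1(\cdot)\ge\cdots\ge s_n(\cdot)$ for singular values and $\lambda_j(\cdot)$ for decreasingly ordered eigenvalues of Hermitian matrices. For positive semidefinite $A,B$, the relation $A\le UBU^*$ for some unitary $U$ is equivalent to $\lambda_j(A)\le\lambda_j(B)$ for all $j$. Indeed, if the eigenvalues are ordered this way, diagonalize $A=W\,\diag(\lambda_j(A))W^*$ and $B=W'\diag(\lambda_j(B))W'^*$, and take $U=WW'^*$. It therefore suffices to find a single positive semidefinite matrix $P$ of the form $P=U|X|U^*+V|Y|V^*$ such that $P\ge |X+Y|$.

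\textbf{Key construction (polar decomposition).} Write $X+Y=W|X+Y|$ with $W$ unitary, so that $|X+Y|=W^*X+W^*Y$. The left-hand side is Hermitian, so
\[
|X+Y|=\Re(W^*X)+\Re(W^*Y).
\]
It now suffices to show that for any $T\in\Mn$ there is a unitary $U$ with $\Re T\le U|T|U^*$. Applying this to $T=W^*X$ and $T=W^*Y$ finishes the proof, since $|W^*X|=|X|$ and $|W^*Y|=|Y|$.

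\textbf{Main step: $\Re T\le U|T|U^*$.} By the reduction, this amounts to the eigenvalue inequality $\lambda_j(\Re T)\le s_j(T)$ for every $j$. By Courant--Fischer,
\[
\lambda_j(\Re T)=\max_{\dim S=j}\ \min_{x\in S,\ \|x\|=1}\Re\langle Tx,x\rangle .
\]
For a unit vector $x$ we have $\Re\langle Tx,x\rangle\le |\langle Tx,x\rangle|\le\|Tx\|$. Hence
\[
\min_{x\in S}\Re\langle Tx,x\rangle\le \min_{x\in S}\|Tx\|=\min_{x\in S}\langle |T|^2x,x\rangle^{1/2}\le \lambda_j(|T|)=s_j(T),
\]
where the last inequality is Courant--Fischer applied to $|T|^2$ followed by a square root. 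This gives $\lambda_j(\Re T)\le s_j(T)$ for all $j$.

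The remaining care is in the reduction for general Hermitian $A$: if $\Re T$ has negative eigenvalues, the ordered comparison $\lambda_j(A)\le\lambda_j(B)$ still yields $A\le UBU^*$ by the same diagonalization. So nothing is lost, and I expect no real obstacle beyond this min--max estimate.
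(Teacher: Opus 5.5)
Your proof is correct and complete. The paper itself gives no proof of this statement: it quotes Thompson's inequality as a known result, citing \cite{Tho76}. Your argument is the standard short proof.

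\begin{itemize}
\item The polar decomposition $X+Y=W|X+Y|$ with $W$ unitary gives $|X+Y|=\Re(W^*X)+\Re(W^*Y)$.
\item The one-matrix estimate $\lambda_j(\Re T)\le s_j(T)$ is the Fan--Hoffman inequality, and your Courant--Fischer derivation of it is right. This includes the step $\min_{x\in S}\|Tx\|\le s_j(T)$ for every $j$-dimensional $S$.
\item You then upgrade eigenvalue-wise domination to domination by a unitary conjugate, by matching ordered eigenbases. You correctly note this needs no positivity of $\Re T$.
\end{itemize}

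Compared with citing \cite{Tho76}, your route is self-contained and uses only min--max. It also gives the $m$-term version $\bigl|\sum_k X_k\bigr|\le\sum_k U_k|X_k|U_k^*$ for free, by the same polar-decomposition step applied to $\sum_k X_k$. Your lemma ``$\lambda_j(A)\le\lambda_j(B)$ for all $j$ implies $A\le UBU^*$ for some unitary $U$'' is exactly the converse of the Weyl-monotonicity implication the paper uses to pass from \eqref{eq:qsym-dom-sym-unitary} to \eqref{eq:eigwise-qsym-sym}. It is what justifies the word ``equivalently'' there.

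One cosmetic point: in your ``Reduction'' paragraph, the sentence saying it suffices to find $P=U|X|U^*+V|Y|V^*\ge|X+Y|$ just restates the theorem. The reduction you actually use is the eigenvalue-to-unitary-conjugate lemma above, which is what that paragraph should state.
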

	
	For the quadratic symmetric modulus, an exciting Thompson-type inequality in the matrix setting was recently established by the author in \cite[Theorem~1.12(ii)]{Zha26}, answering a question raised by Bourin and Lee \cite{BL26}. We also note that an alternative proof has since appeared in \cite{BL26b}.
	
	\begin{thm}[Zhang]\label{thm:qsym-thompson-matrix}
		Let $X,Y\in\Mn$. Then there exist unitaries $U,V$ such that
		\[
		|X+Y|_{\mathrm{qsym}}\le U|X|_{\mathrm{qsym}}U^*+V|Y|_{\mathrm{qsym}}V^*.
		\]
	\end{thm}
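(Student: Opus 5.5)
Writing $Z=X+Y$, the statement to prove is
\[
\bigl(|Z|^2+|Z^*|^2\bigr)^{1/2}\le U\bigl(|X|^2+|X^*|^2\bigr)^{1/2}U^*+V\bigl(|Y|^2+|Y^*|^2\bigr)^{1/2}V^*
\]
(the factors $\frac1{\sqrt2}$ cancel). The plan is to reduce this to Thompson's inequality (Theorem~\ref{thm:thompson}) applied to larger block matrices. For $Z\in\Mn$, set
\[
\widetilde Z:=\begin{pmatrix} Z & 0\\ Z^* & 0\end{pmatrix}\in\mathbb{M}_{2n}.
\]
Then
\[
\widetilde Z^*\widetilde Z=\begin{pmatrix} Z^*Z+ZZ^* & 0\\ 0&0\end{pmatrix},\qquad |\widetilde Z|=\sqrt2\,|Z|_{\mathrm{qsym}}\oplus 0 .
\]
The map $Z\mapsto \widetilde Z$ is only real-linear, but that is harmless, since $\widetilde{X+Y}=\widetilde X+\widetilde Y$.

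First apply Theorem~\ref{thm:thompson} in $\mathbb{M}_{2n}$ to $\widetilde X,\widetilde Y$. This gives unitaries $\widetilde U,\widetilde V\in\mathbb{M}_{2n}$ with
\[
|Z|_{\mathrm{qsym}}\oplus 0\le \widetilde U\bigl(|X|_{\mathrm{qsym}}\oplus0\bigr)\widetilde U^*+\widetilde V\bigl(|Y|_{\mathrm{qsym}}\oplus 0\bigr)\widetilde V^*.
\]
The difficulty is compressing this back to size $n$ with \emph{unitary} conjugations in $\Mn$. Taking the upper-left $n\times n$ corner gives
\[
|Z|_{\mathrm{qsym}}\le W_1^*A W_1+W_2^*BW_2,\qquad A=|X|_{\mathrm{qsym}},\ B=|Y|_{\mathrm{qsym}},
\]
where $W_1,W_2$ are $n\times n$ contraction blocks of $\widetilde U^*,\widetilde V^*$, not unitaries.

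This is the main obstacle. A compression $W^*AW$ with $\|W\|\le1$ is only majorized by $A$, so at this point we have
\[
\lambda_k\bigl(|Z|_{\mathrm{qsym}}\bigr)\le \lambda_k\bigl(W_1^*AW_1+W_2^*BW_2\bigr),
\]
but we need a genuine operator inequality with unitary orbits.

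The fix I would try is the standard characterization behind Thompson's theorem: for positive $P,A,B$, the following are equivalent.
\begin{enumerate}
\item There exist unitaries $U,V$ with $P\le UAU^*+VBV^*$.
\item For all $k$ and $i_1<\dots<i_k$, $j_1<\dots<j_k$ with $i_r+j_r=r+l_r-1$ for suitable indices $l_r$, the eigenvalue inequality $\lambda_{l}(P)\le\lambda_i(A)+\lambda_j(B)$ holds in the Weyl/Horn form.
\end{enumerate}
By Horn's theorem, condition (2) is equivalent to having $P\le A'+B'$ with $A',B'$ unitarily similar to $A,B$, which is guaranteed once $\lambda_{i+j-1}(P)\le\lambda_i(A)+\lambda_j(B)$ for all $i,j$. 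The sufficiency direction is a known fact: by Weyl-type interlacing, an operator $M$ with Weyl inequalities relative to $(A,B)$ can be dominated after conjugation.

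So I would verify the Weyl inequalities $\lambda_{i+j-1}(|Z|_{\mathrm{qsym}})\le\lambda_i(A)+\lambda_j(B)$. Each $W_1^*AW_1$ has $\lambda_i(W_1^*AW_1)\le\lambda_i(A)$ by Cauchy interlacing for contractions. Weyl's inequality for the sum $W_1^*AW_1+W_2^*BW_2$ then gives
\[
\lambda_{i+j-1}\le\lambda_i(A)+\lambda_j(B).
\]
Finally, I would invoke the result that these Weyl inequalities alone suffice for $P\le UAU^*+VBV^*$. This is the classical lemma used in Thompson's own proof: construct $U$ so that $UAU^*$ and $P$ are simultaneously diagonal with a suitable shifted ordering, then choose $V$ similarly. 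Checking that this lemma applies is the step I expect to demand the most care.
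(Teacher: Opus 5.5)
\textbf{There is a genuine gap in your final step.} The embedding, the application of Thompson's theorem in $\mathbb M_{2n}$, and the compression to
\[
|Z|_{\mathrm{qsym}}\le W_1^*AW_1+W_2^*BW_2,\qquad W_1,W_2 \text{ square contractions},
\]
are all correct. Your $\widetilde Z$ is, up to the factor $\frac1{\sqrt2}$, the map $\Phi$ the paper uses for Theorem~\ref{thm:qsym-Lee}. The paper itself does not prove this statement; it only cites \cite{Zha26}.

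The lemma you invoke is false. You claim that the Weyl inequalities $\lambda_{i+j-1}(P)\le\lambda_i(A)+\lambda_j(B)$ alone guarantee unitaries with $P\le UAU^*+VBV^*$. Take $A=B=\diag(1,0)$ and $P=\diag(2,1)$. All three Weyl inequalities hold: $2\le 1+1$, $1\le 1+0$, and $1\le 0+1$. Yet $\tr(UAU^*+VBV^*)=2<3=\tr P$ for all unitaries $U,V$, so no such domination exists.

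Domination up to unitary orbits holds exactly when $\lambda(P)$ lies entrywise below some spectrum $\lambda(A'+B')$ in Horn's set. That requires at least the trace inequality, and in general more of Horn's inequalities. By passing from an operator inequality to Weyl inequalities for the \emph{sum}, you discard exactly this information, and no sufficiency lemma can recover it.

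\textbf{The fix.} The obstacle you identified does not exist. For $A\ge 0$ and a square contraction $W$, the matrix $W^*AW=(A^{1/2}W)^*(A^{1/2}W)$ has the same eigenvalues as $A^{1/2}WW^*A^{1/2}\le A$. Hence $\lambda_j(W^*AW)\le\lambda_j(A)$ for every $j$, as you yourself noted.

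For Hermitian matrices, eigenvalue-wise domination is equivalent to domination by a unitary conjugate. Diagonalize both matrices with eigenvalues in decreasing order; this produces a unitary $U_1$ with $W^*AW\le U_1AU_1^*$.

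Apply this to each summand separately, before summing:
\[
|Z|_{\mathrm{qsym}}\le W_1^*AW_1+W_2^*BW_2\le U_1AU_1^*+U_2BU_2^*.
\]
This completes the argument with no Horn-type machinery.
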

	In contrast, for the arithmetic symmetric modulus the situation is more subtle. In \cite{Zha26} the author exhibited a $2\times2$ counterexample showing that the triangle inequality may fail even for the operator norm: there exist $A,B\in\mathbb M_2$ such that
	\[
	\bigl\||A+B|_{\mathrm{sym}}\bigr\|_\infty \;>\; \bigl\||A|_{\mathrm{sym}}\bigr\|_\infty \;+\; \bigl\||B|_{\mathrm{sym}}\bigr\|_\infty.
	\]
	Consequently, a Thompson-type domination for $|\cdot|_{\mathrm{sym}}$ cannot hold in full generality; that is, one cannot in general find unitaries $U,V$ such that
	\[
	|X+Y|_{\mathrm{sym}}\le U|X|_{\mathrm{sym}}U^*+V|Y|_{\mathrm{sym}}V^*.
	\]
	Nevertheless, we conjecture that such a domination becomes valid after allowing a universal constant.
	
	\begin{conj}\label{conj:triangle-ineq-sym-modulus}
		Let $X,Y\in\Mn$. Then there exist unitaries $U,V$ such that
		\[
		|X+Y|_{\mathrm{sym}}\le \sqrt{2}\left(  U|X|_{\mathrm{sym}}U^*+V|Y|_{\mathrm{sym}}V^*\right),
		\]
		and the constant $\sqrt{2}$ is sharp.
	\end{conj}
 Conjecture~\ref{conj:triangle-ineq-sym-modulus} naturally suggests a weaker question: does there exist a universal constant $C>0$ such that for every $X,Y\in\mathbb M_n$ one can find unitaries $U,V$ satisfying
\begin{equation}\label{eq:weak-triangle-sym}
	|X+Y|_{\mathrm{sym}}\le C\left(U|X|_{\mathrm{sym}}U^*+V|Y|_{\mathrm{sym}}V^*\right)?
\end{equation}
Even this weaker formulation appears to be quite delicate.

A seemingly natural approach is to exploit the operator concavity of $t\mapsto t^{\frac{1}{2}}$, which yields
\[
|X+Y|_{\mathrm{sym}}\le |X+Y|_{\mathrm{qsym}}.
\]
Combining this with the Thompson-type inequality for the quadratic symmetric modulus (Theorem~\ref{thm:qsym-thompson-matrix}), we obtain that there exist unitaries $U,V$ such that
\begin{equation}\label{eq:sym-via-qsym}
	|X+Y|_{\mathrm{sym}}\le U|X|_{\mathrm{qsym}}U^*+V|Y|_{\mathrm{qsym}}V^*.
\end{equation}
Thus, to deduce \eqref{eq:weak-triangle-sym} from \eqref{eq:sym-via-qsym}, it would suffice to control $|X|_{\mathrm{qsym}}$ in terms of $|X|_{\mathrm{sym}}$ up to unitary conjugation. More precisely, one would need a universal constant $C>0$ such that for every $X\in\mathbb M_n$ there exists a unitary $W$ with
\begin{equation}\label{eq:qsym-dom-sym-unitary}
	|X|_{\mathrm{qsym}}\le C\,W|X|_{\mathrm{sym}}W^*.
\end{equation}
Equivalently, by Weyl's monotonicity, \eqref{eq:qsym-dom-sym-unitary} would imply the eigenvalue-wise bounds
\begin{equation}\label{eq:eigwise-qsym-sym}
	\lambda_j\!\left(|X|_{\mathrm{qsym}}\right)\le C\,\lambda_j\!\left(|X|_{\mathrm{sym}}\right),
	\qquad j=1,\dots,n,
\end{equation}
where $\lambda_j(\cdot)$ denotes the $j$th largest eigenvalue.

However, we will show by constructing an explicit $2\times2$ counterexample that no such universal constant $C$ exists in \eqref{eq:eigwise-qsym-sym}; see Proposition~\ref{prop:no-constant}, hence the strategy above cannot yield \eqref{eq:weak-triangle-sym}.
	\subsection{Triangle inequalities for unitarily invariant norms}
	Throughout this paper, we always assume $m\ge 2, n\ge 2$.
	An interesting  result of Lee \cite{Lee12} gives the following bound for sums of matrices under any unitarily invariant norm,
	\begin{equation}\label{eq:Lee-uin}
		\Bigl\|\sum_{k=1}^m A_k\Bigr\|\ \le\ \sqrt{m}\,\Bigl\|\sum_{k=1}^m |A_k|\Bigr\|, \qquad A_1,\ldots,A_m\in\Mn.
	\end{equation}
However,  sharpness of the constant $\sqrt{m}$ in \eqref{eq:Lee-uin} is unknown. In this paper, we show that the constant $\sqrt{m}$ in \eqref{eq:Lee-uin} is sharp only when $n \ge m$. Moreover, we derive the following sharp bound, which refines \eqref{eq:Lee-uin}.
	\begin{thm}\label{thm:Lee-optimal}
		Let $A_1,\dots,A_m\in\Mn$. Then
		\begin{equation*}
			\Bigl\|\sum_{k=1}^m A_k\Bigr\|\ \le\ \sqrt{\min \{m,n\}}\,\Bigl\|\sum_{k=1}^m |A_k|\Bigr\|.
		\end{equation*}
		Moreover, the constant $\sqrt{\min\{m,n\}}$ is sharp.
	\end{thm}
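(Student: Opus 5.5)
The plan is to factor $\sum_k A_k$ as a product of two block matrices, apply a Cauchy--Schwarz inequality for unitarily invariant norms, and then compare the two resulting positive matrices through Ky Fan norms. Write polar decompositions $A_k=U_k|A_k|$ with $U_k$ unitary, and set
\[
P:=\sum_{k=1}^m |A_k|,\qquad Q:=\sum_{k=1}^m U_k|A_k|U_k^* .
\]
Let $C$ be the $mn\times n$ block column with blocks $|A_k|^{1/2}$, and $R$ the block column with blocks $|A_k|^{1/2}U_k^*$. Then
\[
R^*C=\sum_k U_k|A_k|=\sum_k A_k,\qquad C^*C=P,\qquad R^*R=Q .
\]
The Cauchy--Schwarz inequality for unitarily invariant norms, $\|R^*C\|^2\le \|R^*R\|\,\|C^*C\|$, then gives
\[
\Bigl\|\sum_k A_k\Bigr\|^2\le \|Q\|\,\|P\|.
\]
Since $R,C$ are rectangular, I would apply it after padding with zeros to square matrices, or use the version for $n\times mn$ factors. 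So it suffices to prove $\|Q\|\le \min\{m,n\}\,\|P\|$.

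For this, by the Fan dominance principle it is enough to prove $\|Q\|_{(j)}\le \min\{m,n\}\|P\|_{(j)}$ for every Ky Fan $j$-norm, $1\le j\le n$. I would establish two separate bounds.

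\emph{First bound.} By the triangle inequality and unitary invariance,
\[
\|Q\|_{(j)}\le \sum_k \||A_k|\|_{(j)} .
\]
Since $0\le |A_k|\le P$, Weyl monotonicity gives $\||A_k|\|_{(j)}\le\|P\|_{(j)}$. Hence $\|Q\|_{(j)}\le m\|P\|_{(j)}$.

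\emph{Second bound.} Since $Q\ge0$,
\[
\|Q\|_{(j)}\le \tr Q=\tr P\le \frac{n}{j}\|P\|_{(j)}\le n\|P\|_{(j)},
\]
where the middle inequality holds because the top $j$ eigenvalues of $P$ have average at least the overall average.

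Combining the two bounds gives the Ky Fan estimate, and hence the inequality. The step I expect to need the most care is the Cauchy--Schwarz inequality for non-square factors in a general unitarily invariant norm. I would justify it by embedding everything in $\mathbb{M}_{mn}$, using the norm induced by singular values padded with zeros, which is a symmetric gauge function extension.

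For sharpness, let $r=\min\{m,n\}$ and take $A_k=e_1e_k^*$ for $k\le r$ and $A_k=0$ for $k>r$. Then
\[
\sum_k A_k=e_1\Bigl(\sum_{k\le r}e_k\Bigr)^*,
\]
which has the single nonzero singular value $\sqrt r$. On the other hand, $\sum_k|A_k|=\sum_{k\le r}e_ke_k^*$ is a projection with eigenvalue $1$. For the operator norm the ratio is therefore exactly $\sqrt r$, so no smaller constant can work for all unitarily invariant norms.
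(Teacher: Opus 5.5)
Your proof is correct, and its core matches the paper's. Your factorization $S=R^*C$ is the Gram form of the paper's positive block matrix $\begin{pmatrix} Y & S\\ S^* & X\end{pmatrix}\ge 0$, with your $Q,P$ playing the roles of its $Y,X$. Your second bound $\|Q\|_{(j)}\le \tr Q=\tr P\le \frac{n}{j}\|P\|_{(j)}$ is exactly the estimate used in Theorem~\ref{thm:lee-optimal-n-less-m}, and the sharpness example is the same.

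There are two genuine differences. First, the paper never proves the $\sqrt m$ half. It cites Lee's inequality \eqref{eq:Lee-uin} and establishes only the $\sqrt n$ bound for $m\ge n$. Your observation $\|Q\|_{(j)}\le\sum_k\||A_k|\|_{(j)}\le m\|P\|_{(j)}$, which follows from $0\le |A_k|\le P$, makes the full $\sqrt{\min\{m,n\}}$ statement self-contained. It also yields the intermediate estimate $\|\sum_k A_k\|^2\le \|\sum_k|A_k^*|\|\,\|\sum_k|A_k|\|$ for every unitarily invariant norm.

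Second, the paper applies Cauchy--Schwarz only at the Ky Fan level, $\|S\|_{(r)}\le (\|Y\|_{(r)}\|X\|_{(r)})^{1/2}$, via the variational formula. It linearizes before invoking Fan dominance, so it never needs Cauchy--Schwarz for a general unitarily invariant norm. Your general version is valid. It follows from Horn's inequality $s(R^*C)\prec_w s(R)\,s(C)$, which holds for rectangular factors (and $R^*C,R^*R,C^*C$ are all $n\times n$), together with $\Phi(xy)\le\Phi(x^2)^{1/2}\Phi(y^2)^{1/2}$ for symmetric gauge functions. Your padding also works if you extend $\Phi$ to $\R^{mn}$ by evaluating it on the $n$ largest absolute entries.

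Still, this is more machinery than you need. Your bound $\|Q\|_{(j)}\le\min\{m,n\}\|P\|_{(j)}$ is already a Ky Fan inequality. Inserting it into the Ky Fan Cauchy--Schwarz gives $\|S\|_{(j)}\le\sqrt{\min\{m,n\}}\,\|P\|_{(j)}$ for all $j$, and Fan dominance finishes the proof. This removes the step you flagged as delicate.
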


We also establish a direct sharp comparison between the sum $\sum_k A_k$ and the sum of the symmetric moduli.
	\begin{thm}\label{thm:sum-vs-sym}
		Let $A_1,\dots,A_m\in\Mn$. Then
		\[
		\Bigl\lVert \sum_{k=1}^m A_k \Bigr\rVert
		\ \le\ 2\,
		\Bigl\lVert \sum_{k=1}^m \bigl|A_k\bigr|_{\mathrm{sym}} \Bigr\rVert .
		\]
		Moreover, the constant $2$ is sharp.
	\end{thm}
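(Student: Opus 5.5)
The plan is to encode each $A_k$ in a positive $2\times 2$ block matrix, sum these blocks, and then bound the off-diagonal block by the diagonal ones through a Cauchy--Schwarz inequality for unitarily invariant norms.

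\emph{Step 1 (block positivity).} For any $X\in\Mn$ the block matrix
\[
\begin{pmatrix} |X^*| & X\\ X^* & |X|\end{pmatrix}
\]
is positive semidefinite. This follows from the polar decomposition $X=W|X|$, which gives $|X^*|=W|X|W^*$, so the matrix equals $\begin{pmatrix}W|X|^{1/2}\\ |X|^{1/2}\end{pmatrix}\begin{pmatrix}W|X|^{1/2}\\ |X|^{1/2}\end{pmatrix}^*$. Summing over $k$ we get
\[
\begin{pmatrix} P & Z\\ Z^* & Q\end{pmatrix}\ge 0,\qquad P:=\sum_k |A_k^*|,\quad Q:=\sum_k|A_k|,\quad Z:=\sum_k A_k .
\]
Note that $P+Q=2\sum_k|A_k|_{\mathrm{sym}}$.

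\emph{Step 2 (off-diagonal estimate).} Positivity of the block matrix gives a contraction $K$ with $Z=P^{1/2}KQ^{1/2}$. We then apply the Cauchy--Schwarz inequality for unitarily invariant norms (Bhatia), namely $\|P^{1/2}KQ^{1/2}\|^2\le \|P\|\,\|Q\|$ for a contraction $K$. This is the step that needs the most care, because of the middle factor $K$. The form we need follows from $\||A^*B|\|^2\le \||A^*A|\|\,\||B^*B|\|$ with $A=P^{1/2}$ and $B=KQ^{1/2}$, together with $Q^{1/2}K^*KQ^{1/2}\le Q$ and the monotonicity of unitarily invariant norms on positive matrices.

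It is tempting to bound $\|P^{1/2}KQ^{1/2}\|$ by $\|P^{1/2}Q^{1/2}\|$ and then invoke Bhatia--Kittaneh. That would give constant $1$, and it is false, as the sharpness example below shows. So we must not drop $K$ that way.

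\emph{Step 3 (conclude).} By AM--GM,
\[
\|Z\|\le \sqrt{\|P\|\,\|Q\|}\le \max\{\|P\|,\|Q\|\}.
\]
Since $0\le P\le P+Q$ and $0\le Q\le P+Q$, Weyl monotonicity gives $\max\{\|P\|,\|Q\|\}\le \|P+Q\|$. Hence
\[
\Bigl\|\sum_k A_k\Bigr\|\le \|P+Q\| = 2\,\Bigl\|\sum_k |A_k|_{\mathrm{sym}}\Bigr\|.
\]

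\emph{Sharpness.} Take $A_1=E_{12}$, embedded in the top-left $2\times 2$ corner (possible since $n\ge2$), and $A_k=0$ for $k\ge2$. Then $|A_1|=E_{22}$, $|A_1^*|=E_{11}$, and $|A_1|_{\mathrm{sym}}=\tfrac12(E_{11}+E_{22})$. For the operator norm the left side equals $\|E_{12}\|_\infty=1$, while $\bigl\||A_1|_{\mathrm{sym}}\bigr\|_\infty=\tfrac12$. So equality holds with constant $2$, and no smaller constant works.
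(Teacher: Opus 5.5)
Your proof is correct, but its key step differs from the paper's.

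\textbf{What you do differently.} Both arguments start from Lemma~\ref{lem:pos-block-MA} summed over $k$. The paper then adds $\diag(|A_k|,|A_k^*|)$ to each block, so that both diagonal blocks become $2\sum_k|A_k|_{\mathrm{sym}}$. It then applies the equal-diagonal Lemma~\ref{lem:eqdiag-dom}, which it proves from scratch using the bilinear bound $|x^*Zy|\le\frac12(x^*Px+y^*Py)$, Ky Fan's maximum principle and Fan dominance. You instead keep the unequal blocks $P=\sum_k|A_k^*|$ and $Q=\sum_k|A_k|$, factor $Z=P^{1/2}KQ^{1/2}$, and use the Cauchy--Schwarz inequality for unitarily invariant norms (Horn--Mathias, Bhatia--Kittaneh) followed by Weyl monotonicity.

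\textbf{What your route buys.} It gives the stronger intermediate estimate
\[
\Bigl\|\sum_{k=1}^m A_k\Bigr\|^2\le\Bigl\|\sum_{k=1}^m|A_k^*|\Bigr\|\,\Bigl\|\sum_{k=1}^m|A_k|\Bigr\|.
\]
This is never worse than the paper's bound $\|P+Q\|$, and it is better by a factor $2$ when, for example, all summands are normal (so $P=Q$). Combined with $P,Q\le\sqrt2\sum_k|A_k|_{\mathrm{qsym}}$, it also gives Theorem~\ref{thm:sum-vs-qsym} immediately.

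\textbf{What it costs.} It depends on the Horn--Mathias/Bhatia--Kittaneh inequality, which is a deeper external result than the paper's elementary, self-contained lemma. For the theorem itself you can avoid that dependence. The Ky Fan version $\|Z\|_{(r)}\le\sqrt{\|P\|_{(r)}\|Q\|_{(r)}}$ is exactly \eqref{eq:KF-step1} in the proof of Theorem~\ref{thm:lee-optimal-n-less-m}. Your final step $\max\{\|P\|_{(r)},\|Q\|_{(r)}\}\le\|P+Q\|_{(r)}$ holds for every $r$, so Fan dominance finishes the argument norm by norm.

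Your one-summand sharpness example with $E_{12}$ is equivalent to the paper's.
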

	
		Turning to the quadratic symmetric modulus, we next record the analogue of Theorems~\ref{thm:Lee-optimal} and \ref{thm:sum-vs-sym},
	showing that $\sum_k A_k$ can be dominated in norm by $\sum_k \qsma{A_k}$ with the sharp constant $\sqrt2$.
	
	\begin{thm}\label{thm:sum-vs-qsym}
		Let $A_1,\dots,A_m\in\Mn$. Then
		\[
		\Bigl\lVert \sum_{k=1}^m A_k \Bigr\rVert
		\ \le\ \sqrt2\,
		\Bigl\lVert \sum_{k=1}^m \bigl|A_k\bigr|_{\mathrm{qsym}} \Bigr\rVert .
		\]
		Moreover, the constant $\sqrt2$ is sharp.
	\end{thm}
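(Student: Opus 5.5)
The plan is to combine operator monotonicity of the square root with the standard $2\times 2$ block-matrix criterion for dominating an off-diagonal block.

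\textbf{Step 1 (pointwise bounds).} Fix $k$ and write $P_k:=|A_k|_{\mathrm{qsym}}$. By definition
\[
P_k^2=\tfrac12\bigl(|A_k|^2+|A_k^*|^2\bigr),
\]
so $|A_k|^2\le 2P_k^2$ and $|A_k^*|^2\le 2P_k^2$. Since $t\mapsto t^{1/2}$ is operator monotone, this gives
\[
|A_k|\le \sqrt2\,P_k,\qquad |A_k^*|\le\sqrt2\,P_k .
\]

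\textbf{Step 2 (block positivity).} I will use the standard fact that
\[
\begin{pmatrix}|A_k^*| & A_k\\ A_k^* & |A_k|\end{pmatrix}\ge 0 .
\]
To see it, write $A_k=W|A_k|$ with $W$ unitary. Then $|A_k^*|=W|A_k|W^*$, and the block matrix equals $\binom{W}{I}|A_k|\binom{W}{I}^*$. Adding the positive matrix $\bigl(\sqrt2P_k-|A_k^*|\bigr)\oplus\bigl(\sqrt2P_k-|A_k|\bigr)$, which is positive by Step 1, yields
\[
\begin{pmatrix}\sqrt2P_k & A_k\\ A_k^* & \sqrt2P_k\end{pmatrix}\ge 0 .
\]
Summing over $k$ and setting $Q:=\sum_k P_k$ and $A:=\sum_k A_k$, we get
\[
\begin{pmatrix}\sqrt2 Q & A\\ A^*&\sqrt2 Q\end{pmatrix}\ge0 .
\]

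\textbf{Step 3 (norm bound).} I will then invoke the well-known lemma that
\[
\begin{pmatrix}M & X\\ X^* & N\end{pmatrix}\ge0\ \Longrightarrow\ \|X\|\le\|M\|^{1/2}\|N\|^{1/2}
\]
for every unitarily invariant norm. One proof writes $X=M^{1/2}CN^{1/2}$ with $C$ a contraction and applies the Cauchy--Schwarz inequality for unitarily invariant norms. Here $M=N=\sqrt2\,Q$, so
\[
\|A\|\le\sqrt2\,\|Q\|=\sqrt2\,\Bigl\|\sum_k|A_k|_{\mathrm{qsym}}\Bigr\| ,
\]
which is the inequality in the statement.

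\textbf{Step 4 (sharpness).} Take the operator norm, let $A_1=E_{12}$ (the matrix unit), and set $A_k=0$ for $k\ge2$. Then $|A_1|=E_{22}$ and $|A_1^*|=E_{11}$, so $|A_1|_{\mathrm{qsym}}=\tfrac{1}{\sqrt2}I$. Hence the left side of the inequality is $1$ and the right-hand norm is $1/\sqrt2$. The ratio is exactly $\sqrt2$, and since $n\ge2$ this example embeds in $\Mn$.

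I expect the main point needing care to be Step 3, namely citing the off-diagonal-block lemma correctly for general unitarily invariant norms. Steps 1 and 2 are elementary; the only subtlety there is remembering that $|A_k^*|$ and $|A_k|$ are unitarily conjugate through the same unitary $W$ that appears in the polar decomposition.
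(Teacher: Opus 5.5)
Your proposal is correct and follows the paper's proof essentially step for step. It uses the same pointwise bounds $|A_k|,|A_k^*|\le\sqrt2\,|A_k|_{\mathrm{qsym}}$, the same positive block $\bigl(\begin{smallmatrix}|A_k^*|&A_k\\ A_k^*&|A_k|\end{smallmatrix}\bigr)$ plus a diagonal correction, and the same summation followed by an off-diagonal domination step. The differences are minor: the paper proves the equal-diagonal case $\|Z\|\le\|P\|$ directly from a bilinear-form estimate, Ky Fan norms and Fan dominance, rather than citing the general Cauchy--Schwarz bound $\|X\|^2\le\|M\|\,\|N\|$, and its sharpness example takes all $A_k=E_{12}$ instead of a single nonzero summand.
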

	
Since every unitarily invariant norm depends only on singular values, we have
\[
\Bigl\|\sum_{k=1}^m A_k\Bigr\|
=\Bigl\|\Bigl|\sum_{k=1}^m A_k\Bigr|\Bigr\|.
\]
This observation suggests seeking Lee-type estimates in which the modulus $|\cdot|$ on both sides of \eqref{eq:Lee-uin} is replaced by the symmetric moduli $\syma{\cdot}$ and $\qsma{\cdot}$ introduced above.

Recently, Bourin and Lee \cite{BL26b} established a genuine triangle-type inequality
	for the arithmetic symmetric modulus itself, namely an estimate for $\syma{\sum_k A_k}$ in terms of $\sum_k \syma{A_k}$,
	and \cite[Question~2.6]{BL26b} asked  whether the factor $\sqrt2$ in Theorem~\ref{thm:Bourin-Lee} could be improved. In this paper, we show that $\sqrt2$ is optimal for all $n\ge 3$,
	already for the operator norm.
	
	\begin{thm}[Bourin--Lee]\label{thm:Bourin-Lee}
		Let $A_1,\dots,A_m\in\Mn$. Then
		\begin{equation*}
			\Bigl\lVert \Bigl|\sum_{k=1}^m A_k\Bigr|_{\mathrm{sym}} \Bigr\rVert
			\ \le\ \sqrt2\,
			\Bigl\lVert \sum_{k=1}^m \bigl|A_k\bigr|_{\mathrm{sym}} \Bigr\rVert .
		\end{equation*}
		Moreover, the constant $\sqrt2$ is sharp for all $n\ge 3$.
	\end{thm}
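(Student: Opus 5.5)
Write $S=\sum_{k=1}^m A_k$. The plan has two parts: the upper bound (the Bourin--Lee inequality) and a sharpness construction in dimension $3$. For the upper bound I would pass to $2n\times 2n$ block matrices. For each $k$, the positive block matrix
\[
M_k=\begin{pmatrix} |A_k^*| & A_k\\ A_k^* & |A_k|\end{pmatrix}\ \ge 0
\]
is a standard fact (polar decomposition). Summing gives
\[
M=\begin{pmatrix} P & S\\ S^* & Q\end{pmatrix}\ \ge 0,\qquad P=\sum_k |A_k^*|,\quad Q=\sum_k|A_k|.
\]
The diagonal blocks satisfy $P+Q=2\sum_k|A_k|_{\mathrm{sym}}$, and $M$ is congruent, via the unitary $\tfrac1{\sqrt2}\begin{pmatrix} I & I\\ I&-I\end{pmatrix}$, to a block matrix with diagonal blocks $\tfrac12(P+Q)\pm\Re S$.

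The target is to show
\[
\bigl\||S|_{\mathrm{sym}}\bigr\|\le \sqrt2\,\Bigl\|\tfrac{P+Q}{2}\Bigr\|,
\]
and I would aim for it in two steps.

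\textbf{Step 1.} Apply Theorem~\ref{thm:sym-vs-qsym} to get $\bigl\||S|_{\mathrm{sym}}\bigr\|\le\bigl\||S|_{\mathrm{qsym}}\bigr\|$.

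\textbf{Step 2.} Control $\bigl\||S|_{\mathrm{qsym}}\bigr\|$ by the diagonal of $M$. The natural tool is the Bourin--Lee decomposition of a positive block matrix,
\[
M=U\begin{pmatrix} P&0\\0&0\end{pmatrix}U^*+V\begin{pmatrix}0&0\\0&Q\end{pmatrix}V^*,
\]
combined with the identity
\[
|S|_{\mathrm{qsym}}^2=\tfrac12\bigl(S^*S+SS^*\bigr).
\]
Through this identity, $|S|_{\mathrm{qsym}}$ is the modulus of the stacked operator $\tfrac1{\sqrt2}\binom{S}{S^*}$, and this stacked operator can be read off from the off-diagonal part of $M$ and its flip $JMJ$, where $J$ exchanges the two blocks. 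The off-diagonal part of a positive block matrix is dominated in every unitarily invariant norm by half of $M$ itself. One then compares $\|M\|$ with $\|P\oplus Q\|$ and with $\|P+Q\|$, and the factor $\sqrt2$ should appear exactly where the quadratic mean is exchanged for the arithmetic one.

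I expect Step 2 to be the main obstacle. One must pass from a $2n$-dimensional estimate involving $P\oplus Q$ to an $n$-dimensional estimate involving $P+Q$ while losing no more than $\sqrt2$. Bounding $\|P\oplus Q\|$ by $\|P+Q\|$ is harmless for the operator norm but not obvious for general unitarily invariant norms. I would first try to do this through Ky Fan $k$-norms, since it suffices to prove the inequality for all of them, and there the required comparison becomes an eigenvalue majorization question. If that fails, I would fall back on the original Bourin--Lee route.

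For sharpness I would take rank-one summands $A_k=x_ky_k^*$ in $\mathbb M_3$ with unit vectors. Then $|A_k|_{\mathrm{sym}}=\tfrac12(x_kx_k^*+y_ky_k^*)$, so the right-hand side equals $\tfrac{\sqrt2}{2}\bigl\|\sum_k(x_kx_k^*+y_ky_k^*)\bigr\|_\infty$. A naive chain such as $e_1e_2^*+e_2e_3^*$ gives ratio $1$, so genuine cancellation in $S$ is needed. The idea is to choose the vectors so that the frame operator $\sum_k(x_kx_k^*+y_ky_k^*)$ is as spread out as possible, ideally a multiple of $I_3$, while $S$ is essentially a rank-one or partial-isometry operator whose $|S|$ and $|S^*|$ concentrate on a common direction, making $\lambda_1(|S|_{\mathrm{sym}})$ large. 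The plan is to parametrize a symmetric family, for example vectors on a cone around $e_1$ with phases, and optimize the ratio, expecting a value of $\sqrt2$ either exactly or in a limit as $m\to\infty$. The example then embeds into every $n\ge 3$ by padding with zeros. Locating the right family is the part I would have to find by computation.
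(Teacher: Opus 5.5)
\textbf{Sharpness.} Sharpness is the part of this statement that the paper actually proves, and here your proposal stops short. You describe a search heuristic but produce no matrices, and ``expecting $\sqrt2$ exactly or in a limit'' does not show that $\sqrt2$ is attained or approached. Your heuristics are the right ones: a tight frame on the right, and $S$ a multiple of a partial isometry whose $|S|$ and $|S^*|$ share a direction. Also, $m=2$ already suffices.

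The paper takes $A=uv^\top$ and $B=xy^\top$, where $u,v,x,y$ are the normalizations of $(1,1,1),(1,-1,-1),(-1,1,-1),(1,1,-1)$, the four body diagonals of the cube. These form a tight frame, $uu^\top+vv^\top+xx^\top+yy^\top=\tfrac43 I_3$, so $|A|_{\mathrm{sym}}+|B|_{\mathrm{sym}}=\tfrac23 I_3$. Then $S=A+B=\tfrac13\begin{pmatrix}0&-2&0\\2&0&-2\\0&-2&0\end{pmatrix}$. The matrices $S^\top S$ and $SS^\top$ are $\tfrac89$ times the projections onto $\mathrm{span}\{e_2,e_1-e_3\}$ and $\mathrm{span}\{e_2,e_1+e_3\}$ respectively. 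Hence $|S|_{\mathrm{sym}}=\diag(\tfrac{\sqrt2}{3},\tfrac{2\sqrt2}{3},\tfrac{\sqrt2}{3})$, and the operator-norm ratio is exactly $\sqrt2$. Padding with zeros handles $n>3$.

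\textbf{The inequality.} The paper quotes this from Bourin--Lee rather than reproving it. Your Step~2 relies on a false claim: the off-diagonal part of a positive block matrix is \emph{not} dominated by $\tfrac12 M$ in every unitarily invariant norm. For $A_1=I_n$ one gets $M=\begin{pmatrix} I&I\\ I&I\end{pmatrix}$. Its off-diagonal part is unitary with trace norm $2n$, while $\tfrac12\|M\|_1=n$. In general only domination by $M$ holds; domination by $\tfrac12 M$ holds only for the operator norm.

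Even in the operator norm your chain cannot produce $\sqrt2$. It passes through $\|S\|_\infty$ and $\tfrac12(\|P\|_\infty+\|Q\|_\infty)$. For $A_1=E_{12}$ both of these equal $1$, while the target $\sqrt2\,\|\sum_k|A_k|_{\mathrm{sym}}\|_\infty$ equals $\tfrac{\sqrt2}{2}$. Any route through $\|M\|\le\|P\|+\|Q\|$ loses a factor $2$, not $\sqrt2$.

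A correct operator-norm argument is available from your own setup. Writing $S=P^{1/2}KQ^{1/2}$ gives $S^*S\le\|P\|_\infty Q$ and $SS^*\le\|Q\|_\infty P$. Hence $|S|_{\mathrm{qsym}}^2\le\tfrac12\|P+Q\|_\infty(P+Q)$, and so $\||S|_{\mathrm{sym}}\|_\infty\le\||S|_{\mathrm{qsym}}\|_\infty\le\tfrac{1}{\sqrt2}\|P+Q\|_\infty$. However, this only yields $\lambda_j(|S|_{\mathrm{qsym}})\le\sqrt{2\lambda_1(T)\lambda_j(T)}$ with $T=\tfrac{P+Q}{2}$, so by itself it does not control the Ky Fan norms. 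For general unitarily invariant norms you need to invoke Bourin--Lee's theorem, as the paper does.
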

However, for $2\times2$ matrices, numerical examples suggest that the optimal constant may be strictly smaller than $\sqrt{2}$ in Theorem~\ref{thm:Bourin-Lee}. This motivates the following question, which may be quite challenging.
	\begin{prob}\label{prob:Bourin-Lee-2-by-2}
			Let $A_1,\dots,A_m\in\mathbb{M}_2$. Determine the optimal constant $c(m,2)$ such that
		\begin{equation*}
			\Bigl\lVert \Bigl|\sum_{k=1}^m A_k\Bigr|_{\mathrm{sym}} \Bigr\rVert
			\ \le\ c(m,2)\,
			\Bigl\lVert \sum_{k=1}^m \bigl|A_k\bigr|_{\mathrm{sym}} \Bigr\rVert .
		\end{equation*}
	\end{prob}
Next, we give a $2\times2$ example that yields a nontrivial lower bound for Problem~\ref{prob:Bourin-Lee-2-by-2} in the operator norm.
	\begin{eg}
		\label{ex:n=2-lowerbound-1.1789}
		Let
\begin{align*}
		A&=
	\begin{pmatrix}
		-0.773354-3.706913\,i & -0.605203-0.251180\,i\\
		\ \ 0.302923+1.869626\,i & \ \ 0.296552+0.139226\,i
	\end{pmatrix},\\
	B&=
	\begin{pmatrix}
		-0.614194+0.304837\,i & -0.919027+0.530163\,i\\
		\ \ 2.687653+0.304749\,i & \ \ 4.176505+0.211368\,i
	\end{pmatrix}.
\end{align*}
	
Then a direct numerical evaluation (e.g.\ in \textsc{Matlab}) shows that, for the operator norm,
\[
\frac{\bigl\||A+B|_{\mathrm{sym}}\bigr\|_\infty}
{\bigl\||A|_{\mathrm{sym}}+|B|_{\mathrm{sym}}\bigr\|_\infty}
\approx 1.1789471123.
\]
Consequently, for every $m\ge 2$ (by taking $A_1=A$, $A_2=B$, and $A_3=\cdots=A_m=0$),
the optimal constant $c(m,2)$ in Problem~\ref{prob:Bourin-Lee-2-by-2} satisfies
\[
c(m,2)\ \ge\ 1.178.
\]
Moreover, extensive numerical experiments suggest that the optimal value of $c(m,2)$ might be close to this lower bound.
	\end{eg}
	Finally, we turn to Lee-type estimates in which the quadratic symmetric modulus
	appears on both sides. In contrast to the arithmetic symmetric modulus setting above, one can obtain
	a dimension-sensitive bound whose  constant depends on $\min\{m, 2n\}$.
	\begin{thm}\label{thm:qsym-Lee}
		Let $A_1,\dots,A_m\in\Mn$. Then
		\[
		\Bigl\lVert \Bigl|\sum_{k=1}^m A_k\Bigr|_{\mathrm{qsym}} \Bigr\rVert
		\ \le\ \sqrt{\min \{m,2n\}}\,
		\Bigl\lVert \sum_{k=1}^m \bigl|A_k\bigr|_{\mathrm{qsym}} \Bigr\rVert .
		\]
	\end{thm}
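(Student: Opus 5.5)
The plan is to realize the quadratic symmetric modulus as an ordinary modulus of a rectangular matrix and then invoke Theorem~\ref{thm:Lee-optimal} in dimension $2n$. For $Z\in\Mn$ set
\[
T(Z):=\frac{1}{\sqrt2}\begin{pmatrix} Z & 0\\ Z^* & 0\end{pmatrix}\in\mathbb{M}_{2n}.
\]
This map is real-linear, since $Z\mapsto Z^*$ is conjugate-linear but additive, so $T\bigl(\sum_k A_k\bigr)=\sum_k T(A_k)$. A direct computation gives
\[
T(Z)^*T(Z)=\frac12\begin{pmatrix} Z^*Z+ZZ^* & 0\\ 0&0\end{pmatrix},\qquad\text{hence}\qquad |T(Z)|=\qsma{Z}\oplus 0_n .
\]
Consequently
\[
\Bigl|\sum_k T(A_k)\Bigr|=\Bigl|\sum_k A_k\Bigr|_{\mathrm{qsym}}\oplus 0_n
\quad\text{and}\quad
\sum_k |T(A_k)|=\Bigl(\sum_k \qsma{A_k}\Bigr)\oplus 0_n .
\]

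Next I apply Theorem~\ref{thm:Lee-optimal} to $T(A_1),\dots,T(A_m)\in\mathbb{M}_{2n}$. For every unitarily invariant norm on $\mathbb{M}_{2n}$ it yields
\[
\Bigl\|\Bigl|\sum_k A_k\Bigr|_{\mathrm{qsym}}\oplus 0_n\Bigr\| \le \sqrt{\min\{m,2n\}}\,\Bigl\|\Bigl(\sum_k\qsma{A_k}\Bigr)\oplus 0_n\Bigr\|.
\]
Here I used that $\|\sum_k T(A_k)\|=\||\sum_k T(A_k)|\|$.

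It remains to transfer this from $\mathbb{M}_{2n}$ back to an arbitrary unitarily invariant norm on $\Mn$, which I expect to be the only point needing care. Applying the inequality above to the Ky Fan $k$-norms on $\mathbb{M}_{2n}$ for $k=1,\dots,n$ gives the corresponding Ky Fan inequalities on $\Mn$. This works because both padded matrices have at most $n$ nonzero singular values, which coincide with those of the unpadded $n\times n$ blocks. The Ky Fan dominance theorem then upgrades these Ky Fan inequalities to every unitarily invariant norm on $\Mn$, giving the stated bound.

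Two remarks on the constant. No sharpness is claimed in Theorem~\ref{thm:qsym-Lee}, so nothing more is needed. The constant $2n$ rather than $n$ appears precisely because the embedding doubles the dimension.
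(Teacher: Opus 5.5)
Your proof is correct and is essentially the paper's argument. It uses the same embedding $\frac{1}{\sqrt2}\begin{pmatrix} Z&0\\ Z^*&0\end{pmatrix}$ (the paper's $\Phi$), its additivity, the identity $|\Phi(Z)|=|Z|_{\mathrm{qsym}}\oplus 0$, and then Theorem~\ref{thm:Lee-optimal} in $\mathbb{M}_{2n}$. Your Ky Fan norm and Fan dominance step for passing from $\mathbb{M}_{2n}$ back to $\mathbb{M}_n$ is a more careful justification of the paper's one-line remark that adjoining a zero direct summand does not change a unitarily invariant norm.
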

Furthermore, we strongly suspect that the constant in Theorem~\ref{thm:qsym-Lee} is not optimal; in particular, we have not been able to find any examples that attain  equality. This leaves an open problem.
	
	\subsection{Triangle inequalities on Schatten $p$-norms} Let $1 \le p \le \infty$. For $X \in \mathbb{M}_n$, the Schatten $p$-norm of $X$ is defined by
	\[
	\|X\|_p := \bigl(\tr|X|^p\bigr)^{\frac{1}{p}}.
	\]

Lee \cite{Lee12} posed the problem of determining the optimal constant $c_p$ such that
\[
\|A+B\|_p\le c_p\,\||A|+|B|\|_p,\qquad \text{for all } A,B\in \Mn,
\]
which is now commonly referred to as \emph{Lee's problem}.
In particular, she conjectured that
$c_2=\sqrt{\frac{1+\sqrt{2}}{2}}$.
This conjecture was first confirmed by Lin and Zhang \cite{LZ22}.
Subsequently, the author \cite{Zha25} provided an alternative proof, and \cite{Zha25b} further refined the estimate by deriving a sharper bound in which the constant is expressed more precisely in terms of the singular values of $A$ and $B$.

Motivated by Lee's problem, Tang and Zhang \cite{TZ26} proposed the following generalization for several matrices $A_k\in \Mn, 1\le k\le m$:
determine the optimal constant $c_p(m)$ depending only on $p$ and $m$ such that
\begin{equation}\label{eq:tang-zhang}
	\Bigl\lVert  \sum_{k=1}^m A_k\Bigr\rVert_p
	\ \le\ c_p(m)\,
	\Bigl\lVert \sum_{k=1}^m \bigl|A_k\bigr| \Bigr\rVert_p .
\end{equation}
For the above problem, Tang and Zhang~\cite{TZ26} stated several sharp values and general bounds. More precisely, they asserted that
\begin{enumerate}
	\item $c_{1}(m)=1$;
	\item $c_{2}(m)=\sqrt{\frac{1+\sqrt{m}}{2}}$, and they exhibited $m$ matrices of size $m\times m$ for which equality is attained in \eqref{eq:tang-zhang};
	\item $c_{\infty}(m)=\sqrt{m}$, and they exhibited $m$ matrices of size $m\times m$ for which equality is attained;
	\item $c_{p}(m)\le m^{\frac12-\frac{1}{2p}}$ for all $1\le p\le\infty$. 
\end{enumerate}
However, in order to claim that $c_{2}(m)=\sqrt{\frac{1+\sqrt{m}}{2}}$ and $c_{\infty}(m)=\sqrt{m}$ hold \emph{dimension-independently}, one would need to produce extremal examples in every dimension $n\ge 2$, in particular using $m$ matrices in $\mathbb M_{2}$ (or at least in $\mathbb M_n$ for arbitrary $n$). The constructions in~\cite{TZ26} only show sharpness in dimensions $n\ge m$: indeed, by taking a direct sum with a zero block, their $m\times m$ extremizers embed into $\mathbb M_n$ for any $n\ge m$, and hence the claimed constants are sharp in that range.
Indeed, the constant $c_p(m)$ in \eqref{eq:tang-zhang} is often \emph{dimension-dependent}. In other words, one cannot generally expect the optimal constant in
\eqref{eq:tang-zhang}
to be chosen independently of the matrix size $n$; rather, the best constant may vary with $n$. This motivates introducing the dimension-sensitive constant $c_p(m,n)$ and studying the sharper problem stated above.
\begin{prob}\label{prob:general-lee-problem}
	Let $1\le p\le \infty$ and let $A_1,\dots,A_m\in\Mn$. Determine the optimal constant $c_p^{\mathrm{abs}}(m,n)$ such that
	\[
	\Bigl\lVert  \sum_{k=1}^m A_k\Bigr\rVert_p
	\ \le\ c_p^{\mathrm{abs}}(m,n)\,
	\Bigl\lVert \sum_{k=1}^m \bigl|A_k\bigr| \Bigr\rVert_p .
	\]
\end{prob}

For Problem~\ref{prob:general-lee-problem}, we prove the following dimension-dependent conclusions:
\begin{enumerate}
	\item $c_{1}^{\mathrm{abs}}(m,n)=1$;
	\item $c_{2}^{\mathrm{abs}}(m,n)=\sqrt{\frac{1+\sqrt{\min\{m,n\}}}{2}}$;
	\item $c_{\infty}^{\mathrm{abs}}(m,n)=\sqrt{\min\{m,n\}}$;
	\item $c_{p}^{\mathrm{abs}}(m,n)\le \bigl(\min\{m,n\}\bigr)^{\frac12-\frac{1}{2p}}$ for all $1\le p\le\infty$.
\end{enumerate}
In particular, these bounds are dimension-sensitive and, when $m>n$, they strictly sharpen the corresponding estimates in Tang and Zhang~\cite{TZ26}.

Bourin and Lee \cite[Section~6]{BL26b} asked
\begin{center}
	\emph{What are the symmetric moduli versions of \eqref{eq:tang-zhang}?}
\end{center}
This motivates us to  investigate analogous inequalities in which the usual modulus $|\cdot|$ is replaced by the arithmetic symmetric modulus $|\cdot|_{\mathrm{sym}}$ or  the quadratic symmetric modulus $|\cdot|_{\mathrm{qsym}}$.

The following result gives a sharp Schatten-$p$ estimate in terms of the arithmetic symmetric modulus $|\cdot|_{\mathrm{sym}}$.

\begin{thm}\label{thm:schatten-sum-vs-sym-p}
	Let $1\le p\le \infty$ and let $A_1,\dots,A_m\in\Mn$. Then
	\[
	\Bigl\lVert \sum_{k=1}^m A_k \Bigr\rVert_p
	\ \le 2^{1-\frac{1}{p}}\,
	\Bigl\lVert \sum_{k=1}^m \bigl|A_k\bigr|_{\mathrm{sym}} \Bigr\rVert_p.
	\]
	Moreover, the constant $2^{1-\frac{1}{p}}$ is sharp.
\end{thm}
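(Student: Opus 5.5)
Our plan is to interpolate between the endpoint cases $p=1$ and $p=\infty$, then check sharpness with an explicit example.

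\emph{Endpoints.} For $p=\infty$, Theorem~\ref{thm:sum-vs-sym} applied to the operator norm already gives the constant $2$. For $p=1$ we want constant $1$. Write $S=\sum_k A_k=W|S|$ (polar decomposition). Then
\[
\|S\|_1=\tr W^*S=\sum_k \tr W^*A_k .
\]
Bound each term by $|\tr W^*A_k|\le \tr|A_k|$. Since $\tr|A_k|=\tr|A_k^*|$, we have $\tr|A_k|=\tr|A_k|_{\mathrm{sym}}$. Summing gives $\|S\|_1\le \tr\sum_k|A_k|_{\mathrm{sym}}=\|\sum_k|A_k|_{\mathrm{sym}}\|_1$, because the sum is positive.

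\emph{Interpolation.} The map $(A_1,\dots,A_m)\mapsto \sum A_k$ is linear, but $A\mapsto|A|_{\mathrm{sym}}$ is not, so Riesz--Thorin does not apply directly. Instead we work with singular values through majorization. Set $P=\sum_k|A_k|_{\mathrm{sym}}\ge0$. The $p=1$ step, run with $W$ replaced by a partial isometry onto the top $j$ singular vectors, should give the Ky Fan bound
\[
\sum_{i\le j} s_i(S)\le \sum_k \|A_k\|_{(j)} .
\]
This does not immediately yield $\le \|P\|_{(j)}$. The cleaner route is a block-matrix reformulation. Let $\tilde A=\begin{pmatrix}0&A\\A^*&0\end{pmatrix}$, which is Hermitian, with $|\tilde A|=|A^*|\oplus|A|$. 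Then we aim for
\[
\|S\|_p^p \le \tfrac12\,\bigl\|\textstyle\sum_k|\tilde A_k|\bigr\|_p^p
\quad\text{and}\quad
\bigl\|\textstyle\sum_k |\tilde A_k|\bigr\|_p\le 2^{1-1/p}\cdot 2^{1/p}\|P\|_p .
\]
To prove these, we would use the known Hermitian inequality $\|\sum H_k\|\le\|\sum|H_k|\|$... but that fails in general for noncommuting $H_k$.

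So the more promising approach is to prove directly the two endpoint estimates in the form of a single bound
\[
\|S\|_p\le \|S\|_1^{1/p}\|S\|_\infty^{1-1/p}\ ?
\]
This is false as stated, so we would instead use a Hölder-type argument. Write $\|S\|_p^p=\tr W^*S\,|S|^{p-1}=\sum_k\tr(|S|^{p-1}W^*A_k)$. For each $k$, split $A_k$ through its polar parts, $A_k=U_k|A_k|=|A_k^*|U_k$, and use the Cauchy--Schwarz inequality $|\tr(X^*Y)|\le(\tr X^*X)^{1/2}(\tr Y^*Y)^{1/2}$ with the factorization $A_k=|A_k^*|^{1/2}U_k|A_k|^{1/2}$. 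This gives
\[
|\tr(CA_k)|\le(\tr C|A_k^*|C^*)^{1/2}(\tr |A_k|\cdot)^{1/2}
\le \tfrac12\bigl(\tr\, C|A_k^*|C^* +\tr\,|A_k|\cdot\bigr),
\]
where $C=|S|^{p-1}W^*$ is distributed suitably between the two factors. Summing over $k$ should give $\|S\|_p^p\le \tr(T\,P)$ up to arranging $C$, with $\|T\|_{p'}$ controlled by $2^{1-1/p}\|S\|_p^{p-1}$. Hölder then yields the claim. The factor $2^{1-1/p}$ should appear from $\|X\oplus Y\|_{p'}\le 2^{1/p'}\max$, since $|S|^{p-1}$ and $W|S|^{p-1}W^*$ both have $p'$-norm $\|S\|_p^{p-1}$. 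This balancing step is the main obstacle, and we would first try it on the $2n\times 2n$ block lift.

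\emph{Sharpness.} Take $m=1$ and $A=E_{12}$. Then $\|A\|_p=1$, while $|A|_{\mathrm{sym}}=\tfrac12(E_{11}+E_{22})$ has $p$-norm $\tfrac12\cdot2^{1/p}$. The ratio is therefore exactly $2^{1-1/p}$, and embedding into $\Mn$ via a zero block covers all $n\ge2$.
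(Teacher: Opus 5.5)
\textbf{There is a genuine gap.} Your endpoint cases $p=1$ and $p=\infty$ are fine, and so is the sharpness example. The inequality for $1<p<\infty$, however, is never proved: the proposal stops at what you call ``the main obstacle.''

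\textbf{The block route you abandoned is correct.} You dropped it on a false premise. For Hermitian $H_k$, the norm bound $\|\sum_k H_k\|\le\|\sum_k|H_k|\|$ does \emph{not} fail for noncommuting summands; what can fail is the operator inequality $|\sum_k H_k|\le\sum_k|H_k|$. Indeed, $|H_k|\pm H_k\ge0$. So with $H=\sum_k H_k$ and $D=\sum_k|H_k|$ you get $D\pm H\ge0$, i.e.\ $\begin{pmatrix}D&H\\ H&D\end{pmatrix}\ge0$. Hence $H=D^{1/2}KD^{1/2}$ with $\|K\|_\infty\le1$, and H\"{o}lder gives $\|H\|_p\le\|D\|_p$ (alternatively, apply Lemma~\ref{lem:eqdiag-dom}). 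For your $\tilde A_k$ you have $\|\tilde S\|_p^p=2\|S\|_p^p$ and $D=Y\oplus X$, where $X=\sum_k|A_k|$ and $Y=\sum_k|A_k^*|$. So this is exactly your first displayed block bound. Your second block bound, $\|Y\oplus X\|_p\le\|X+Y\|_p=2\|P\|_p$, is also left unjustified. It is McCarthy's inequality $\tr X^p+\tr Y^p\le\tr(X+Y)^p$. These two steps together are precisely the paper's proof.

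\textbf{Your fallback H\"{o}lder argument can be closed, but not in the form you aimed for.} Use the balanced split $|S|^{p-1}W^*=|S|^{\frac{p-1}{2}}W^*|S^*|^{\frac{p-1}{2}}$ together with $A_k=|A_k^*|^{1/2}U_k|A_k|^{1/2}$. Cauchy--Schwarz and AM--GM then give
\[
\|S\|_p^p\le\tfrac12\bigl(\tr(|S|^{p-1}X)+\tr(|S^*|^{p-1}Y)\bigr).
\]
Here $X$ and $Y$ carry \emph{different} weights, so this is not of the form $\tr(TP)$ automatically. Your observation that $\bigl\||S|^{p-1}\oplus|S^*|^{p-1}\bigr\|_{p'}=2^{1-1/p}\|S\|_p^{p-1}$ correctly handles one side of H\"{o}lder. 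The other side, $\|X\oplus Y\|_p\le\|X+Y\|_p$, again needs McCarthy.

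Alternatively, take the common majorant $T=(|S|^p+|S^*|^p)^{1-1/p}$. By operator monotonicity of $t^{1-1/p}$, $T$ dominates both $|S|^{p-1}$ and $|S^*|^{p-1}$. It satisfies $\|T\|_{p'}=2^{1-1/p}\|S\|_p^{p-1}$, and it realizes your $\tr(TP)$ target. Without one of these two ingredients, the argument does not reach the constant $2^{1-1/p}$.
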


We next turn to the quadratic symmetric modulus. In this case the behavior is markedly different:
the optimal constant is independent of $m$ and exhibits a phase transition at $p=2$.

\begin{thm}\label{thm:schatten-sum-vs-qsym-p}
	Let $1\le p\le \infty$ and let $A_1,\dots,A_m\in\Mn$. Then
\begin{align*}
	\Bigl\lVert \sum_{k=1}^m A_k \Bigr\rVert_p
	\ &\le\ \,
	\Bigl\lVert \sum_{k=1}^m \bigl|A_k\bigr|_{\mathrm{qsym}} \Bigr\rVert_p, \quad\quad\quad\,\, 1\le p\le 2;\\
		\Bigl\lVert \sum_{k=1}^m A_k \Bigr\rVert_p
	\ &\le\ 2^{\frac{1}{2}-\frac{1}{p}}\,
	\Bigl\lVert \sum_{k=1}^m \bigl|A_k\bigr|_{\mathrm{qsym}} \Bigr\rVert_p, \quad 2\le p\le \infty.
\end{align*}
Moreover, the constants $1$ and $ 2^{\frac{1}{2}-\frac{1}{p}}$ are sharp.
\end{thm}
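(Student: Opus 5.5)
The plan is to pass to a Cartesian ``column'' representation. There $\qsma{\cdot}$ becomes an honest modulus, and $\sum_k A_k$ is recovered from a sum of columns by a fixed linear map. The constants then come from the norm of that map in the relevant Schatten classes. The one step I have not checked is a trace inequality, flagged below, on which the whole upper bound rests.

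\textbf{Column representation and the endpoints.} Write $A_k=H_k+iK_k$ with $H_k=\Re A_k$, $K_k=\Im A_k$. Let $C_k=\begin{pmatrix}H_k\\ K_k\end{pmatrix}\in\mathbb M_{2n,n}$. Then $C_k^*C_k=H_k^2+K_k^2$, so $|C_k|=\qsma{A_k}$. Put $S=\sum_kA_k$ and $C=\sum_kC_k$. Then $|C|=\qsma{S}$ and $S=J C$ with $J=\begin{pmatrix}\Id & i\Id\end{pmatrix}$.
\begin{itemize}[leftmargin=*]
\item At $p=\infty$: $\opnorm{J}=\sqrt2$, so $\|S\|_\infty\le\sqrt2\,\|C\|_\infty$.
\item At $p=2$: $\tr S^*S=\tr(H^2+K^2)$, because the cross terms $\pm i\tr(HK-KH)$ cancel. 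Hence $\|S\|_2=\|C\|_2$.
\item At $p=1$ the bound follows directly. By the triangle inequality and $\tr|A_k|=\tr\syma{A_k}$, we get $\|S\|_1\le\sum_k\tr\syma{A_k}$. Operator concavity of $t\mapsto t^{1/2}$ gives $\syma{A_k}\le\qsma{A_k}$. By linearity of the trace, $\|S\|_1\le\bigl\|\sum_k\qsma{A_k}\bigr\|_1$.
\end{itemize}
So everything reduces to comparing $\|C\|_p=\|\sum_kC_k\|_p$ with $\|\sum_k|C_k|\|_p$.

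\textbf{Main obstacle: domination of the column sum.} For general rectangular $C_k$ this comparison fails: Lee's constant is $>1$. So the special structure (Hermitian blocks) must be used. At $p=2$ one needs
\[
\tr C^*C=\sum_{j,k}\Re\tr(A_j^*A_k)\le\sum_{j,k}\tr\bigl(\qsma{A_j}\qsma{A_k}\bigr).
\]
I would therefore first try to prove the pairwise lemma $\Re\tr(A^*B)\le\tr(\qsma A\,\qsma B)$. It holds with equality for $A=B$, and I have checked it on a few elementary $2\times2$ examples. My first attempt would use the Hermitian dilations $T_A=\begin{pmatrix}0&A\\A^*&0\end{pmatrix}$, for which $\tr T_AT_B=2\Re\tr A^*B$. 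I would combine these with the block positivity
\[
\begin{pmatrix}|C_k^*|&C_k\\ C_k^*&|C_k|\end{pmatrix}\ge0.
\]
Summing over $k$ gives $C=P^{1/2}WQ^{1/2}$, with $W$ a contraction, $P=\sum_k|C_k^*|$ and $Q=\sum_k\qsma{A_k}$. Then $\|C\|_p^2\le\|P\|_p\|Q\|_p$. The remaining task is to show that $\|P\|_p\le\|Q\|_p$, using that the blocks of $C_kC_k^*$ are built from Hermitian $H_k,K_k$. If this route proves $\|C\|_p\le\|Q\|_p$ for all $p$, the upper bounds follow:
\begin{itemize}[leftmargin=*]
\item For $2\le p\le\infty$: $\|S\|_p=\|JC\|_p$ with $J$ acting on a $2n\times n$ column. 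I would interpolate the linear map $C\mapsto JC$ between $S_2\to S_2$ (norm $1$) and $S_\infty\to S_\infty$ (norm $\sqrt2$). Riesz--Thorin then gives norm $\le2^{\frac12-\frac1p}$ on $S_p$.
\item For $1\le p\le2$: I would interpolate the same map between the contraction at $p=2$ and the $p=1$ bound. Since the $p=1$ bound was obtained by a different argument, this may instead need a direct majorization argument, namely a weak majorization of the singular values of $S$ by the eigenvalues of $Q$ for $p\le 2$.
\end{itemize}

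\textbf{Sharpness.} For $1\le p\le2$, take $m$ matrices with all $A_k$ equal to one normal matrix $A$. Then $\qsma{A}=|A|$ and equality holds. For $p\ge2$, take $A_1=E_{12}$ and $A_k=0$ otherwise. Then $\|A_1\|_p=1$ and $\qsma{A_1}=\tfrac1{\sqrt2}\Id_2$, so $\|\qsma{A_1}\|_p=2^{\frac1p-\frac12}$. The ratio is exactly $2^{\frac12-\frac1p}$, and embedding by a zero block covers every $n\ge2$.
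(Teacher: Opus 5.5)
\textbf{There is a genuine gap, and it sits at the step you call the main obstacle.} Your argument splits the theorem into two parts.
\begin{enumerate}
\item[(a)] The single-matrix bound $\|S\|_p\le\kappa_p\|\qsma{S}\|_p$, with $\kappa_p=1$ for $p\le2$ and $\kappa_p=2^{\frac12-\frac1p}$ for $p\ge2$.
\item[(b)] The bound $\|C\|_p=\|\qsma{\sum_k A_k}\|_p\le\|\sum_k\qsma{A_k}\|_p$.
\end{enumerate}

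Part (b) is a triangle inequality for the quadratic symmetric modulus with constant $1$. That is exactly the assertion $c_p^{\mathrm{qsym}}(m,n)=1$ in Problem~\ref{prob:schatten-qsym-vs-qsym-p}. It is a stronger statement than the theorem: for $p<2$, $\|\qsma{S}\|_p$ can exceed $\|S\|_p$, e.g.\ $S=E_{12}$. It is available only at $p=1$, where it is the triangle inequality for the real norm $A\mapsto\tr\qsma{A}$. It is also available at $p=2$, because $\|\qsma{S}\|_2=\|S\|_2$ makes it the Frobenius case of the theorem itself. The paper leaves it open for other $p$, reaching only constants like $(\min\{m,2n\})^{\frac12-\frac1{2p}}$.

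Your suggested route to (b) needs the even stronger claim $\|P\|_p\le\|Q\|_p$ with $P=\sum_k|C_k^*|$. Beyond $\tr P=\tr Q$ you give no mechanism for it. The generic estimate $\|P\|_p\le\tr P=\tr Q\le n^{1-\frac1p}\|Q\|_p$ is exactly what produces Lee-type constants larger than $1$.

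Interpolation does not close the gap either.
\begin{itemize}
\item The map $C\mapsto JC$ has $S_2$-norm $\sqrt2$ on all $2n\times n$ matrices: for $C=\begin{pmatrix}X\\-iX\end{pmatrix}$ one gets $JC=2X$. It is isometric only on the \emph{real} subspace of Hermitian-block columns, so Riesz--Thorin does not apply.
\item ``Interpolating'' $\|\sum_k A_k\|_p\le\|\sum_k\qsma{A_k}\|_p$ between $p=1$ and $p=2$ is not interpolation of a linear map.
\end{itemize}
Step (a) is true, but it follows directly, not by interpolation. For $p\ge2$, apply McCarthy's inequality with exponent $p/2\ge1$ to $|S|^2$ and $|S^*|^2$. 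For $p\le2$, use concavity of $t\mapsto t^{p/2}$ on the same pair.

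\textbf{The missing idea is to use your pairwise trace inequality against a dual test matrix, not between two summands.} The inequality holds in the stronger form $|\tr(A^*B)|\le\tr(\qsma{A}\qsma{B})$ (Remark~\ref{rem:trace-qsym}). Its proof is not a block-positivity argument. It is Cauchy--Schwarz after polar decomposition (Lemma~\ref{lem:trace-cs}), followed by Lieb's joint concavity of $(X,Y)\mapsto\tr X^{1/2}Y^{1/2}$ (Lemma~\ref{lem:lieb}).

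With it, for $\|X\|_q\le1$ you get $|\tr(S^*X)|\le\sum_k\tr(\qsma{A_k}\qsma{X})\le\|Q\|_p\|\qsma{X}\|_q$. Hence $\|S\|_p\le d_q\|Q\|_p$ with $d_q=\sup_{X\ne0}\|\qsma{X}\|_q/\|X\|_q$. Everything is now linear in the $A_k$, and no comparison of $\qsma{\sum_kA_k}$ with $\sum_k\qsma{A_k}$ is needed. The single-matrix constant is applied to $X$ at the conjugate exponent:
\begin{itemize}
\item $d_q=1$ for $q\ge2$, by convexity of $t^{q/2}$;
\item $d_q=2^{\frac1q-\frac12}=2^{\frac12-\frac1p}$ for $q\le2$, by subadditivity of $t^{q/2}$ (Bourin--Uchiyama).
\end{itemize}
These are exactly the two constants in the theorem. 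Your $p=1$ argument and both sharpness examples are correct.
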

Having established these ``sum versus sum'' inequalities, it is natural to seek genuine triangle-type estimates in which the arithmetic symmetric modulus (or its quadratic analogue) is applied to the total sum on the left-hand side and compared directly with the sum of the individual moduli on the right-hand side. This leads to the following two optimization problems.
\begin{prob}\label{prob:schatten-sym-vs-sym-p}
	Let $A_1,\dots,A_m\in\Mn$. Determine the optimal constant $c_p^{\mathrm{sym}}(m,n)$ such that
	\[
	\Bigl\lVert  \bigl|\sum_{k=1}^m A_k\bigr|_{\mathrm{sym}} \Bigr\rVert_p
	\ \le\ c_p^{\mathrm{sym}}(m,n)\,
	\Bigl\lVert \sum_{k=1}^m \bigl|A_k\bigr|_{\mathrm{sym}} \Bigr\rVert_p .
	\]
\end{prob}
In Problem~\ref{prob:schatten-sym-vs-sym-p}, we prove that
\begin{enumerate}
\item		$c_1^{\mathrm{sym}}(m,n)=1$;
\item  $	c_\infty^{\mathrm{sym}}(m,n)=\sqrt2$ for $n\ge 3$;
\item $c_p^{\mathrm{sym}}(m,n)\le \min\{2^{1-\frac{1}{p}}, \sqrt{2}\}$;
\item $	c_p^{\mathrm{sym}}(m,n)\ge \left(\frac{2^{\frac{p}{2}}+2^{1-\frac{p}{2}}}{3}\right)^{\frac{1}{p}}$;
in particular, the right-hand side increases from $1$ at $p=2$ to $\sqrt2$ as $p\to\infty$.
\end{enumerate}
\begin{prob}\label{prob:schatten-qsym-vs-qsym-p}
	Let $A_1,\dots,A_m\in\Mn$. Determine the optimal constant $c_p^{\mathrm{qsym}}(m,n)$ such that
	\[
	\Bigl\lVert \bigl|\sum_{k=1}^m A_k\bigr|_{\mathrm{qsym}} \Bigr\rVert_p
	\ \le\ c_p^{\mathrm{qsym}}(m,n)\,
	\Bigl\lVert \sum_{k=1}^m \bigl|A_k\bigr|_{\mathrm{qsym}} \Bigr\rVert_p .
	\]
\end{prob}
For Problem~\ref{prob:schatten-qsym-vs-qsym-p}, we reduce the question to
Problem~\ref{prob:general-lee-problem} by constructing an appropriate $2\times2$ block matrix, which yields
\begin{enumerate}
	\item $c_{1}^{\mathrm{qsym}}(m,n)=1$;
	\item $c_{2}^{\mathrm{qsym}}(m,n)\le \sqrt{\frac{1+\sqrt{\min\{m,2n\}}}{2}}$;
	\item $c_{\infty}^{\mathrm{qsym}}(m,n)\le \sqrt{\min\{m,2n\}}$;
	\item $c_{p}^{\mathrm{qsym}}(m,n)\le \bigl(\min\{m,2n\}\bigr)^{\frac12-\frac{1}{2p}}$ for all $1\le p\le\infty$.
\end{enumerate}
\medskip
\noindent\textbf{Proof techniques in this paper.}
The proofs in this paper rely on the following tools, combined with
explicit low-dimensional extremizers for sharpness.

\smallskip
\noindent
\emph{(i) Functional calculus, operator concavity/monotonicity, and subadditivity.}
The equivalence constants among the three moduli are obtained by applying the operator concavity and
monotonicity of the square-root map, together with standard norm inequalities for unitarily invariant norms.
More precisely, Theorems~\ref{thm:abs-vs-sym}, \ref{thm:abs-vs-qsym}, and \ref{thm:sym-vs-qsym}
follow from elementary order comparisons (e.g.\ $|Z|\le |Z|+|Z^*|$), the triangle inequality,
and concavity-based inequalities such as the Bourin--Uchiyama subadditivity for symmetric norms
applied to $f(t)=t^\frac{1}{2}$.
Sharpness in these theorems is certified by concrete $2\times2$ examples (notably $E_{12}$) and by normal matrices.

\smallskip
\noindent
\emph{(ii) Positive block matrices, contractions, and Ky Fan domination.}
Several triangle-type bounds are proved by encoding the summands into a positive $2\times2$ block matrix and then
extracting norm inequalities from this positivity.
The dimension-dependent refinement of Lee's inequality in Theorem~\ref{thm:Lee-optimal} is derived by:
(1) summing the standard positive block matrix
$\bigl(\begin{smallmatrix}|A_k^*|&A_k\\ A_k^*&|A_k|\end{smallmatrix}\bigr)\ge 0$,
(2) factoring the off-diagonal block via a contraction,
and (3) estimating Ky Fan norms and invoking Fan's dominance theorem.
The ``sum versus sum'' inequalities
(Theorems~\ref{thm:sum-vs-sym} and \ref{thm:sum-vs-qsym})
use an equal-diagonal block-positivity criterion to bound the off-diagonal block by the diagonal block for every
unitarily invariant norm.

\smallskip
\noindent
\emph{(iii) Embedding tricks for the quadratic symmetric modulus.}
To transfer estimates for the usual modulus to the quadratic symmetric modulus, we introduce a block embedding
$\Phi(A)$ whose modulus satisfies
$|\Phi(A)|=\diag(|A|_{\mathrm{qsym}},0)$.
This reduction allows us to deduce Theorem~\ref{thm:qsym-Lee} from Theorem~\ref{thm:Lee-optimal}
by working in dimension $2n$.

\smallskip
\noindent
\emph{(iv) Schatten $p$-norm arguments: dilations, duality, and a phase transition.}
For Schatten norms, Theorem~\ref{thm:schatten-sum-vs-sym-p} uses Hermitian dilations
$H_k=\bigl(\begin{smallmatrix}0&A_k\\ A_k^*&0\end{smallmatrix}\bigr)$, block positivity, contraction factorization,
Schatten--H\"{o}lder inequalities, and the McCarthy trace inequality to compare $\|\sum H_k\|_p$ and $\|\,\sum|H_k|\,\|_p$.
Theorem~\ref{thm:schatten-sum-vs-qsym-p} proceeds by Schatten duality and a trace domination of the form
$$|\tr(A^*B)|\le \tr(|A|_{\mathrm{qsym}}|B|_{\mathrm{qsym}})$$ (deduced by Lieb's concavity theorem), which reduces the problem to a single-matrix constant.
Computing this constant uses convexity/concavity of $t\mapsto t^{\frac{q}{2}}$ and leads to a sharp phase transition at $p=2$,
with extremizers again given by $E_{12}$.

\smallskip
\noindent
\emph{(v) Sharpness and obstruction examples.}
Sharpness of the constants is established by explicit extremal constructions:
rank-one examples in $\mathbb M_3$ (Theorem~\ref{thm:sharp}) for the Bourin--Lee constant,
and $2\times2$ families with degenerating spectra to rule out any universal unitary domination
(Proposition~\ref{prop:no-constant}).

\medskip
\noindent\textbf{Organization of this paper.}
In Section~\ref{sec:bourin-lee-problem} we address two questions of Bourin and Lee:
we prove optimality of the factor $\sqrt2$ for all $n\ge 3$ by an explicit $3\times3$ construction,
and we provide counterexamples relevant to the expansive decomposition problem.
Section~\ref{sec:equivalence} contains the proofs of the equivalence results for the three moduli
(Theorems~\ref{thm:abs-vs-sym}--\ref{thm:sym-vs-qsym})
and the nonexistence of a universal unitary domination constant (Proposition~\ref{prop:no-constant}).
Section~\ref{sec:sharpness} proves the sharp dimension-dependent refinement of Lee's inequality (Theorem~\ref{thm:Lee-optimal})
and clarifies when the original $\sqrt{m}$ factor is optimal.
Section~\ref{sec:sum-vs} establishes the sharp ``sum versus sum'' comparisons involving the symmetric and quadratic symmetric moduli
(Theorems~\ref{thm:sum-vs-sym}, \ref{thm:sum-vs-qsym}, and \ref{thm:qsym-Lee}).
Section~\ref{sec:partial-results-general-lee} develops partial results on the dimension-sensitive version of Lee's problem, including the sharp Frobenius-norm
constant and general $p$-bounds.
Section~\ref{sec:proofs-schatten-p} proves the sharp Schatten $p$-norm inequalities in terms of the arithmetic symmetric modulus and the quadratic symmetric modulus
(Theorems~\ref{thm:schatten-sum-vs-sym-p} and \ref{thm:schatten-sum-vs-qsym-p}).
Finally, Section~\ref{sec:partial-schatten-p} collects partial results and bounds for the genuine triangle-type Schatten $p$-norm problems
involving $|\cdot|_{\mathrm{sym}}$ and $|\cdot|_{\mathrm{qsym}}$.

	\section{Solutions on two questions of Bourin and Lee}\label{sec:bourin-lee-problem}
	
	\subsection{Two questions of  Bourin and Lee}
Bourin and Lee~\cite{BL26b} proved Theorem~\ref{thm:Bourin-Lee} and also obtained the following eigenvalue bound.

\begin{cor}[{\cite[Corollary~2.4]{BL26b}}]\label{cor:BL-2.4}
	Let $A_1,\dots,A_m\in \mathbb M_n$. Then for all integers $j\ge 0$ such that $1+3j\le n$,
	\[
	\lambda_{1+3j}\!\left(\left|\sum_{k=1}^m A_k\right|_{\mathrm{sym}}\right)
	\ \le\
	\sqrt{2}\,
	\lambda_{1+j}\!\left(\sum_{k=1}^m |A_k|_{\mathrm{sym}}\right),
	\]
	where $\lambda_j(\cdot)$ denotes the $j$th largest eigenvalue.
\end{cor}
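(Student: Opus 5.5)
The plan is to encode the sum into a single positive block matrix and then combine a Bourin--Lee type unitary-orbit decomposition with Weyl's inequality $\lambda_{i+l-1}(S+T)\le\lambda_i(S)+\lambda_l(T)$. The index loss $1+j\mapsto 1+3j$ should come from Weyl's inequality applied twice, and the factor $\sqrt2$ from a concavity step. I cannot yet derive $\sqrt2$ from the tools below; the approach as written only gives the constant $2$.

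\emph{Step 1: the block matrix and its decomposition.} Write $X=\sum_k A_k$, $P=\sum_k|A_k^*|$, $Q=\sum_k|A_k|$ and $S=\sum_k|A_k|_{\mathrm{sym}}=\tfrac12(P+Q)$. Summing the positive blocks $\bigl(\begin{smallmatrix}|A_k^*|&A_k\\ A_k^*&|A_k|\end{smallmatrix}\bigr)$ gives
\[
M=\begin{pmatrix}P&X\\X^*&Q\end{pmatrix}\ge0 .
\]
I then invoke the Bourin--Lee decomposition of positive block matrices. It supplies unitaries $U,V\in\mathbb M_{2n}$ with
\[
M=U\Bigl(\tfrac{P+Q}{2}\oplus0\Bigr)U^*+V\Bigl(0\oplus\tfrac{P+Q}{2}\Bigr)V^*
=U(S\oplus0)U^*+V(0\oplus S)V^*.
\]
Weyl's inequality then gives $\lambda_{1+2j}(M)\le 2\lambda_{1+j}(S)$.

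\emph{Step 2: a pinching step that lowers the constant from $2$ to $\sqrt2$ (this is the part I cannot yet prove).} I would work with $|X|_{\mathrm{qsym}}\ge|X|_{\mathrm{sym}}$ and the embedding $\Phi$ from the introduction, for which $|\Phi(X)|=|X|_{\mathrm{qsym}}\oplus0$. The hope is that this yields
\[
\lambda_{i}\bigl(|X|_{\mathrm{qsym}}\bigr)^2\le\lambda_i\Bigl(\tfrac12\bigl(P^2+Q^2\bigr)\Bigr)\quad\text{up to one more index shift},
\]
and that $\tfrac12(P^2+Q^2)$ can then be compared with $2S^2$ eigenvalue-wise. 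One would take square roots (operator monotone) to produce the constant $\sqrt2$. A further Weyl inequality would lose another $j$ in the index, giving $1+3j$.

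\emph{Step 3: the route without Step 2.} Without a proof of Step 2, I fall back on the isometry $J\colon y\mapsto (Wy,y)/\sqrt2$, where $X=W|X|$ is the polar decomposition. A direct computation gives
\[
J^*MJ=\tfrac12\bigl(W^*PW+Q\bigr)+|X|\ \ge\ |X|.
\]
The analogous isometry gives the same bound for $|X^*|$. Hence $s_i(X)\le\lambda_i(M)$, with $s_i$ the singular values. Weyl's inequality also gives $\lambda_{2i-1}\bigl(|X|_{\mathrm{sym}}\bigr)\le s_i(X)$. Combined with Step 1 this yields $\lambda_{1+4j}\bigl(|X|_{\mathrm{sym}}\bigr)\le 2\lambda_{1+j}(S)$. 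That is weaker than the corollary in both the constant ($2$ instead of $\sqrt2$) and the index ($1+4j$ instead of $1+3j$).

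The main obstacle is Step 2: getting the $\sqrt2$ while losing only a factor of $3$ in the index. My first attempt will be to apply the decomposition of Step 1 not to $M$ itself but to its image under the $2\times2$ rotation $\tfrac1{\sqrt2}\bigl(\begin{smallmatrix}I&I\\-I&I\end{smallmatrix}\bigr)$. That image has diagonal blocks $S\pm\Re X$, so one of the two pieces should be controlled by $\sqrt2\,S$ directly rather than by $2S$.
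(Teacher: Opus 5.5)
\textbf{Verdict: there is a genuine gap.} The paper quotes this corollary from Bourin--Lee without proof, so your proposal has to stand on its own, and it does not. Step~2 is the only place where $\sqrt2$ and the index $1+3j$ could come from, and it is a hope rather than an argument. Its second half is false as stated. Take $A_1=e_1v^*$ with $v=(\cos\phi,\sin\phi)^{\top}$, $0<\phi<\pi/2$, and $A_2=\cdots=A_m=0$. Then $\tfrac12(P^2+Q^2)=S$ has $\lambda_2=\tfrac12(1-\cos\phi)$, which exceeds $\lambda_2(2S^2)=\tfrac12(1-\cos\phi)^2$.

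The fallback also fails, because Step~1 is wrong. The averaged decomposition $M=U(S\oplus0)U^*+V(0\oplus S)V^*$ is a result for positive block matrices with a \emph{Hermitian} off-diagonal block. For a general positive block matrix the decomposition carries the diagonal blocks themselves: $M=U(P\oplus0)U^*+V(0\oplus Q)V^*$. Concretely, take $A_1=E_{12}\in\mathbb M_2$ and $A_2=\cdots=A_m=0$. Then $P=E_{11}$, $Q=E_{22}$, $S=\tfrac12I_2$, while $M$ equals $\bigl(\begin{smallmatrix}1&1\\1&1\end{smallmatrix}\bigr)$ on $\mathrm{span}\{e_1,e_4\}$ and vanishes elsewhere. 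So $\lambda_1(M)=2>1=2\lambda_1(S)$. Hence $\lambda_{1+2j}(M)\le2\lambda_{1+j}(S)$ fails already at $j=0$, and the last link of your Step~3 chain breaks; the correct decomposition together with $P,Q\le2S$ only gives the factor $4$. Rotating $M$ does not help: the rotated matrix has off-diagonal block $\pm\tfrac12(P-Q)+i\,\Im X$, which is not Hermitian unless $\Im X=0$.

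\textbf{The missing idea} is to factor through $S^{1/2}$ instead of decomposing $M$.
\begin{enumerate}
\item Positivity of $\bigl(\begin{smallmatrix}|A_k^*|&A_k\\A_k^*&|A_k|\end{smallmatrix}\bigr)$ gives $|\langle A_kh,h\rangle|\le\langle|A_k^*|h,h\rangle^{1/2}\langle|A_k|h,h\rangle^{1/2}\le\langle\syma{A_k}h,h\rangle$. Hence $|\langle Xh,h\rangle|\le\langle Sh,h\rangle$, and in particular $\pm\Re X\le S$ and $\pm\Im X\le S$.
\item Then $\bigl(\begin{smallmatrix}S&\Re X\\ \Re X&S\end{smallmatrix}\bigr)\ge0$, so Lemma~\ref{lem:contraction} gives $\Re X=S^{1/2}K_1S^{1/2}$. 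Likewise $\Im X=S^{1/2}K_2S^{1/2}$, with contractions $K_1,K_2$. Consequently
\[
\qsma{X}^2=(\Re X)^2+(\Im X)^2=S^{1/2}TS^{1/2},\qquad T:=K_1^*SK_1+K_2^*SK_2 .
\]
\item Since $\lambda_i(K_\ell^*SK_\ell)=s_i(S^{1/2}K_\ell)^2\le\lambda_i(S)$, one Weyl step gives $\lambda_{1+2j}(T)\le2\lambda_{1+j}(S)$.
\item Horn's inequality $s_{a+b-1}(YZ)\le s_a(Y)s_b(Z)$, applied to $T^{1/2}S^{1/2}$ with $a=1+2j$ and $b=1+j$, gives
\[
\lambda_{1+3j}\bigl(\qsma{X}^2\bigr)\le\lambda_{1+2j}(T)\,\lambda_{1+j}(S)\le2\lambda_{1+j}(S)^2 .
\]
\item Taking square roots and using $\syma{X}\le\qsma{X}$ (proof of Theorem~\ref{thm:sym-vs-qsym}) yields the corollary, even with $\qsma{X}$ on the left.
\end{enumerate}
So $\sqrt2$ is the square root of a factor-$2$ Weyl bound on $\qsma{X}^2$. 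The index shift $1+j\mapsto1+3j$ is one Weyl step plus one Horn product step. No block-matrix decomposition is needed.
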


They further asked the following question.

\begin{ques}[{\cite[Question~2.6]{BL26b}}]\label{ques:BL-2.6}
	Is $\sqrt{2}$ the best possible constant in Theorem~\ref{thm:Bourin-Lee} and Corollary~\ref{cor:BL-2.4}?
\end{ques}
In this section, we answer Question~\ref{ques:BL-2.6} affirmatively for all $n\ge 3$.

Bourin and Lee also observed (see \cite[Section~6]{BL26b}) that if $A_1,\dots,A_m\in \mathbb{M}_7$
are such that $\sum_{k=1}^m A_k$ is a Hermitian unitary, then
\[
\lambda_4\!\left(\sum_{k=1}^m |A_k|_{\mathrm{sym}}\right)\ \ge\ 1.
\]
Thus, decomposing a Hermitian unitary as a sum imposes rather strange constraints on the summands,
which motivates the following question. Recall that $X\in \mathbb M_n$ is \emph{expansive} if $|X|\ge I$.

\begin{ques}[{\cite[Question~6.3]{BL26b}}]\label{ques:BL-expansive}
	Let $A\in \mathbb{M}_{2n}$ or $\mathbb{M}_{2n-1}$ be expansive, and suppose that $A=\sum_{k=1}^m A_k$.
	What can be said about
	\[
	\lambda_n\!\left(\sum_{k=1}^m |A_k|_{\mathrm{sym}}\right)
	\qquad\text{and}\qquad
	\lambda_n\!\left(\sum_{k=1}^m |A_k|\right)?
	\]
\end{ques}
We will show that, for the usual modulus term
\(\lambda_n\!\left(\sum_k |A_k|\right)\), a universal lower bound equal to \(1\)
cannot be expected, which yields a partial answer to
Question~\ref{ques:BL-expansive}.
	
	\subsection{Solution of Question~\ref{ques:BL-2.6} for all $n\ge 3$}
	We begin with an elementary rank-one computation.
	
	\begin{lem}\label{lem:rank1}
		Let $u,v\in \C^n$ be unit vectors and set $R:=u v^*\in \Mn$.
		Then
		\[
		R^* R = v v^*,\qquad RR^* = u u^*,\qquad
		\absv{R} = v v^*,\qquad \absv{R^*}=u u^*.
		\]
		Consequently $\syma{R} = \tfrac12(uu^* + vv^*)$.
	\end{lem}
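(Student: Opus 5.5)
The proof is a direct computation from the definitions. Write $R=uv^*$, so $R^*=vu^*$. We use the two normalizations $u^*u=1$ and $v^*v=1$.

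First, the products. Using associativity and $u^*u=1$,
\[
R^*R=v(u^*u)v^*=vv^*.
\]
Likewise, using $v^*v=1$,
\[
RR^*=u(v^*v)u^*=uu^*.
\]

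Next, the moduli. Since $v$ is a unit vector, $vv^*$ is an orthogonal projection: it is Hermitian and $(vv^*)^2=v(v^*v)v^*=vv^*$. In particular it is positive semidefinite and equals its own square. Uniqueness of the positive square root then gives
\[
|R|=(R^*R)^{\frac12}=(vv^*)^{\frac12}=vv^*.
\]
The same argument, applied to the projection $uu^*$ with $u^*u=1$, gives $|R^*|=(RR^*)^{\frac12}=uu^*$.

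Finally, substituting into the definition of the arithmetic symmetric modulus yields
\[
|R|_{\mathrm{sym}}=\frac{|R|+|R^*|}{2}=\tfrac12\,(uu^*+vv^*).
\]

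There is no real obstacle in this argument. The only step needing justification is the identification of the square root of a rank-one projection with the projection itself, and this follows from idempotence together with uniqueness of positive square roots.
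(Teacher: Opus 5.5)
Your proof is correct and takes the same approach as the paper, whose proof consists only of the phrase ``by a direct computation.'' You carry out that computation explicitly and correctly justify $(vv^*)^{1/2}=vv^*$ using idempotence and uniqueness of the positive square root.
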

	\begin{proof}
		By a direct computation.
	\end{proof}
	Next, we illustrate an explicit extremal pair in $\mathbb{M}_3$ in Theorem~\ref{thm:Bourin-Lee}.
	
	\begin{thm}\label{thm:sharp}
		The constant $\sqrt2$ in Theorem~\ref{thm:Bourin-Lee} is optimal for all $n\ge 3$.
		In fact, there exist $A,B\in \mathbb{M}_3$ such that
		\[
		\opnorm{\syma{A+B}}=\sqrt2\,\opnorm{\syma{A}+\syma{B}}.
		\]
		Moreover, for every $n> 3$, setting
		\[
		A_n:=A\oplus 0_{n-3},\quad B_n:=B\oplus 0_{n-3}\quad\in\Mn,
		\]
		one has
		\[
		\opnorm{\syma{A_n+B_n}}=\sqrt2\,\opnorm{\syma{A_n}+\syma{B_n}}.
		\]
	\end{thm}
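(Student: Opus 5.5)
The plan is to exhibit an explicit rank-one pair in $\mathbb M_3$ and compute both sides exactly. The lemma \ref{lem:rank1} makes this tractable. Writing $A=\alpha\,u_1v_1^*$ and $B=\beta\,u_2v_2^*$ with unit vectors and $\alpha,\beta>0$, it gives
\[
\syma{A}+\syma{B}=\tfrac12\bigl(\alpha u_1u_1^*+\alpha v_1v_1^*+\beta u_2u_2^*+\beta v_2v_2^*\bigr),
\]
a sum of four weighted rank-one projections. The denominator is then the largest eigenvalue of an explicit $3\times3$ positive matrix.

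For the numerator I would arrange that $A+B$ is also as simple as possible. The natural ansatz is either that $A+B$ is again rank one, say $xy^*$, or that it is a scalar multiple of a partial isometry. In the rank-one case the lemma gives
\[
\syma{A+B}=\tfrac12\bigl(\|x\|\,\hat y\hat y^*+\|y\|\,\hat x\hat x^*\bigr),
\]
where $\hat x,\hat y$ denote the normalized vectors, and its norm is governed by $\|x\|\|y\|$ and the angle between $x$ and $y$.

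The key tension is visible already in $\mathbb M_2$. Preliminary computations with $v_1=v_2$ give ratios only $\sqrt2/2$ or $2\sqrt2/3$, so two dimensions are not enough. The extra dimension must be used so that the four vectors $u_1,v_1,u_2,v_2$ are spread out. Then the four projections in the denominator overlap little: their sum has trace $\alpha+\beta$, and one wants the spectrum as flat as possible. At the same time the numerator should concentrate. Here one wants $x$ and $y$ long and nearly parallel, so that $\syma{A+B}$ has a large top eigenvalue.

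Concretely, I would work with real vectors in $\R^3$ depending on one or two angles, for instance vectors of the form $(\cos t,\pm\sin t,0)$ and $(0,\cos s,\pm\sin s)$, together with a weight ratio $\beta/\alpha$. I would write both top eigenvalues in closed form, where the numerator is essentially a $2\times2$ eigenvalue problem on $\mathrm{span}\{x,y\}$. Then I would optimize the ratio and identify parameters at which it equals exactly $\sqrt2$. By Theorem~\ref{thm:Bourin-Lee} the ratio can never exceed $\sqrt2$, so an example attaining it is automatically extremal. The main obstacle is precisely finding this configuration. My first attempt would be a numerical search over the parametrized family, followed by guessing exact algebraic entries (cosines of $\pi/4$ or $\pi/8$, and vectors like $(1,1,0)/\sqrt2$ and $(0,1,1)/\sqrt2$) and confirming equality symbolically through the characteristic polynomials.

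Once $A,B\in\mathbb M_3$ are found, the statement for $n>3$ is immediate. The maps $Z\mapsto|Z|$ and $Z\mapsto|Z^*|$ commute with direct sums, so
\[
\syma{A_n+B_n}=\syma{A+B}\oplus0_{n-3},\qquad \syma{A_n}+\syma{B_n}=\bigl(\syma A+\syma B\bigr)\oplus 0_{n-3}.
\]
The operator norm is unchanged by adjoining a zero block. Hence equality with constant $\sqrt2$ persists in every $\Mn$ with $n\ge3$, and optimality in Theorem~\ref{thm:Bourin-Lee} follows.
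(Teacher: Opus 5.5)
\textbf{There is a genuine gap: no extremal pair is ever produced.} Your passage from $\mathbb M_3$ to $\Mn$ via $A\oplus 0_{n-3}$ is correct and matches the paper, but it is the only part you actually prove. The theorem is an existence claim. Its whole content is an explicit pair $A,B\in\mathbb M_3$ together with an exact verification that the ratio equals $\sqrt2$. Your proposal describes a search instead: parametrize, optimize numerically, guess algebraic values, check characteristic polynomials. You yourself call finding the configuration ``the main obstacle''. Nothing in the argument guarantees that this search ends in an exact extremizer, so optimality of $\sqrt2$ remains unproved.

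\textbf{Your first ansatz cannot work.} For a unit vector $\xi$, the polar decomposition and Cauchy--Schwarz give $|\xi^*A\xi|\le(\xi^*\absv{A}\xi\cdot\xi^*\absv{A^*}\xi)^{1/2}\le\xi^*\syma{A}\xi$. Hence the numerical radius always satisfies $w(A+B)\le\opnorm{\syma{A}+\syma{B}}$. Now suppose $A+B=ab^*$ has rank one. Then $\syma{ab^*}=\tfrac{\|a\|\|b\|}{2}(\hat a\hat a^*+\hat b\hat b^*)$; your displayed formula drops a factor here. Its norm is $\tfrac12(\|a\|\|b\|+|b^*a|)$, which is exactly $w(ab^*)$, so the ratio is at most $1$. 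What you need is $S=A+B$ with $\opnorm{\syma{S}}=\sqrt2\,w(S)$, and your partial-isometry alternative is the ansatz that can supply this.

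\textbf{The paper's example.} Take $A=uv^\top$ and $B=xy^\top$ with $u=\tfrac1{\sqrt3}(1,1,1)^\top$, $v=\tfrac1{\sqrt3}(1,-1,-1)^\top$, $x=\tfrac1{\sqrt3}(-1,1,-1)^\top$, $y=\tfrac1{\sqrt3}(1,1,-1)^\top$.
\begin{itemize}
\item The four rank-one projections sum to $\tfrac43\Id_3$, so $\syma{A}+\syma{B}=\tfrac23\Id_3$.
\item $A+B$ is $\tfrac{2\sqrt2}{3}$ times a rank-two partial isometry whose initial and final spaces both contain $e_2$. Hence $\syma{A+B}=\diag\bigl(\tfrac{\sqrt2}{3},\tfrac{2\sqrt2}{3},\tfrac{\sqrt2}{3}\bigr)$.
\item The ratio is therefore exactly $\sqrt2$; here $w(A+B)=\tfrac23$.
\end{itemize}
After an orthogonal change of basis and a global sign change, this pair lies in your family with $\alpha=\beta=1$, $\tan t=1/\sqrt2$ and $s=\tfrac\pi2-t$. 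So your family was adequate, and the angles you proposed to guess ($\pi/4$, $\pi/8$) are not these. What is missing is the actual identification of the parameters and the exact verification.
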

	\begin{proof}
		We	work over $\R^3$, consider the unit vectors
		\[
		u=\frac1{\sqrt3}\!\begin{pmatrix}1\\1\\1\end{pmatrix},\quad
		v=\frac1{\sqrt3}\!\begin{pmatrix}1\\-1\\-1\end{pmatrix},\quad
		x=\frac1{\sqrt3}\!\begin{pmatrix}-1\\1\\-1\end{pmatrix},\quad
		y=\frac1{\sqrt3}\!\begin{pmatrix}1\\1\\-1\end{pmatrix}.
		\]
		Define $A:=u v^{\top}$ and $B:=x y^{\top}$.
		By Lemma~\ref{lem:rank1},
		\[
		\syma{A}=\frac12(uu^{\top}+vv^{\top}),\qquad
		\syma{B}=\frac12(xx^{\top}+yy^{\top}).
		\]
		A direct computation of outer products (each entry is $\pm 1/3$) shows that the off-diagonal
		terms cancel and
		\[
		uu^{\top}+vv^{\top}+xx^{\top}+yy^{\top}=\frac{4}{3}\,\Id_3.
		\]
		Hence
		\begin{equation}\label{eq:sum-symA-symB}
			\syma{A}+\syma{B}=\frac12\cdot \frac{4}{3}\,\Id_3=\frac{2}{3}\,\Id_3,
			\qquad\text{so}\qquad
			\opnorm{\syma{A}+\syma{B}}=\frac{2}{3}.
		\end{equation}
		
		Next, we compute $S:=A+B$ explicitly:
		\[
		S = A+B = \frac13
		\begin{pmatrix}
			0&-2&0\\
			2&0&-2\\
			0&-2&0
		\end{pmatrix}.
		\]
		One checks
		\[
		S^{\top}S=\frac19
		\begin{pmatrix}
			4&0&-4\\
			0&8&0\\
			-4&0&4
		\end{pmatrix},
		\qquad
		SS^{\top}=\frac19
		\begin{pmatrix}
			4&0&4\\
			0&8&0\\
			4&0&4
		\end{pmatrix}.
		\]
		Both matrices have spectrum $\{8/9,\,8/9,\,0\}$.
		Since $S^{\top}S$ is a scalar multiple of the orthogonal projection onto its range,
		\[
		\absv{S}=(S^{\top}S)^\frac{1}{2}=\frac{1}{\sqrt{8/9}}\,(S^{\top}S)=\frac{3\sqrt2}{4}\,S^{\top}S,
		\]
		and similarly $\absv{S^{\top}}=\frac{3\sqrt2}{4}\,SS^{\top}$.
		Therefore
		\[
		\syma{S}=\frac{\absv{S}+\absv{S^{\top}}}{2}
		=
		\frac{3\sqrt2}{8}\,(S^{\top}S+SS^{\top})
		=
		\diag\!\left(\frac{\sqrt2}{3},\,\frac{2\sqrt2}{3},\,\frac{\sqrt2}{3}\right),
		\]
		so
		\begin{equation}\label{eq:norm-symS}
			\opnorm{\syma{A+B}}=\opnorm{\syma{S}}=\frac{2\sqrt2}{3}.
		\end{equation}
		Combining \eqref{eq:sum-symA-symB} and \eqref{eq:norm-symS} yields
		\[
		\frac{\opnorm{\syma{A+B}}}{\opnorm{\syma{A}+\syma{B}}}
		=
		\frac{(2\sqrt2)/3}{2/3}=\sqrt2.
		\]
		Since the operator norm is a symmetric norm, no constant $c<\sqrt2$ can replace $\sqrt2$
		in Theorem~\ref{thm:Bourin-Lee} for all symmetric norms with $n\ge 3$.
		This proves optimality of $\sqrt2$.
	\end{proof}
	
	\begin{rem}
		Theorem~\ref{thm:sharp} already settles Question~\ref{ques:BL-2.6} for all $n\ge 3$, because Corollary~\ref{cor:BL-2.4}
		with $j=0$ is precisely the operator-norm estimate
		$\lambda_1(\syma{A+B})\le \sqrt2\,\lambda_1(\syma{A}+\syma{B})$.
	\end{rem}

	\subsection{Partial solution of Question~\ref{ques:BL-expansive}}
	\begin{prop}\label{prop:counterexample-modulus}
		Let $n=2$ and $X=\Id_3\in \mathbb{M}_{2n-1}$.
		There exist $X_1,X_2\in \mathbb{M}_3$ with $X=X_1+X_2$ such that
		\[
		\lambda_2\!\left(\absv{X_1}+\absv{X_2}\right) < 1.
		\]
		In particular, no general statement of the form
		$\lambda_n(\sum_k \absv{X_k})\ge 1$ can hold in Question~\ref{ques:BL-expansive}.
	\end{prop}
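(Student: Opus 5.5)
The plan is to exhibit an explicit pair $X_1,X_2\in\mathbb M_3$ with $X_1+X_2=\Id_3$, found by a guided (partly numerical) search, and then to certify $\lambda_2(S)<1$ for $S:=\absv{X_1}+\absv{X_2}$ by an exact, checkable argument. I do not yet have a specific pair; the part I can commit to is which structures to avoid, how to search, and how to certify the result.

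\emph{Constraints on the construction.} First, the trace gives $\tr S=\|X_1\|_1+\|X_2\|_1\ge\|\Id_3\|_1=3$. So if $\lambda_2(S)<1$ then $\lambda_1(S)>1$: the example must concentrate mass in one eigendirection. Second, several natural families fail.
\begin{itemize}
\item Normal splittings fail.
\item $X_2=s\,uv^*$ with $u\perp v$ fails. Any $w\perp u,v$ is a common fixed vector, so $w$ is an eigenvector of $S$ with eigenvalue $1$. The remaining $2\times2$ block then has at most one eigenvalue above $1$, which forces $\lambda_2(S)\ge1$.
\item Symmetric splittings $\tfrac12\Id\pm N$ with $N$ nilpotent fail. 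They make $\absv{X_2}$ a unitary conjugate of $\absv{X_1}$ and spread mass over two directions.
\end{itemize}
The example must therefore couple all three coordinates, so that no standard basis vector is fixed by both summands.

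\emph{Candidate family and search.} I will take $X_2=\Id_3-X_1$ with $X_1$ real of rank two, for instance $X_1=s\,u_1v_1^{\top}+t\,u_2v_2^{\top}$ with $u_i,v_i$ in general position, and optimize $\lambda_2(S)$ numerically over the parameters. The aim is to push one eigenvalue of $S$ well above $1$ while keeping the other two below $1$. Once a good point is located, I will round it to a matrix with simple rational or quadratic-surd entries. Rank one cannot work here, because $X_1=s\,uv^{\top}$ with $u\not\parallel v$ fixes a vector orthogonal to both. If the moduli of the rounded pair are awkward to compute in closed form, I will instead pick entries for which $X_k^{*}X_k$ has easily computable square roots. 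Examples are entries making $X_k^{*}X_k$ a multiple of a projection plus a scalar, as in the proof of Theorem~\ref{thm:sharp}.

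\emph{Certification.} To show $\lambda_2(S)<1$ exactly without diagonalizing $S$, I will use two facts about $T:=S-\Id_3$.
\begin{itemize}
\item If some $2\times2$ principal (or unitarily compressed) submatrix of $T$ is negative definite, then by Cauchy interlacing $T$ has at least two negative eigenvalues.
\item If additionally $\det T>0$, then $T$ has exactly two negative eigenvalues and one positive eigenvalue, since the determinant of a matrix with three negative eigenvalues is negative.
\end{itemize}
Together these give $\lambda_2(S)<1<\lambda_1(S)$, and both conditions can be checked by a finite computation.

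The main obstacle is finding a pair whose moduli $\absv{X_1}$ and $\absv{X_2}$ are explicitly computable while still satisfying the strict inequality with a visible margin. I expect to handle this by searching numerically first and then adjusting the entries until both Gram matrices have closed-form square roots. Failing that, I will certify via interval bounds on the square roots, which the determinant-and-compression test tolerates because it needs only strict sign information.
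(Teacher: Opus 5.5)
\textbf{The gap: there is no witness.} You never produce $X_1,X_2$. The proposition is purely existential, so its entire content is a concrete pair with $X_1+X_2=\Id_3$ and $\lambda_2(\absv{X_1}+\absv{X_2})<1$. A search strategy plus a certification recipe, with no pair, does not establish it.

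Your exclusions do not rigorously narrow the search either, and one is wrong as stated. For $N=E_{12}+E_{13}+E_{23}$, the sums of squared $2\times2$ minors of $\tfrac12\Id_3+N$ and $\tfrac12\Id_3-N$ are $15/16$ and $47/16$. So $\absv{\tfrac12\Id_3-N}$ need not be unitarily similar to $\absv{\tfrac12\Id_3+N}$. Two smaller points:
\begin{itemize}
\item In the rank-one exclusion, the operative fact is that the complementary $2\times2$ block has trace at least $2$, hence an eigenvalue $\ge1$. ``At most one eigenvalue above $1$'' forces nothing.
\item Your determinant test is unnecessary. A negative definite two-dimensional compression of $S-\Id_3$ alone gives $\lambda_2(S)<1$ by interlacing.
\end{itemize}

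\textbf{How the paper fills it, and how your test applies.} The paper gives the explicit pair $X_1=\bigl(\begin{smallmatrix}-1&-1&0\\ 1&1&0\\ -1&-1&1\end{smallmatrix}\bigr)$, $X_2=\Id_3-X_1$. This pair is real and of rank two, so it lies inside your proposed family. The paper then only reports the eigenvalues of $S=\absv{X_1}+\absv{X_2}$ numerically.

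Your compression test would certify this example exactly, but not via a principal submatrix. The diagonal of $S-\Id_3$ is roughly $(2.49,\,1.31,\,-0.095)$, so every principal $2\times2$ submatrix has a positive diagonal entry. Instead take $a=(1,-1,0)^{\top}/\sqrt2\in\ker X_1$ and $b=e_3\in\ker X_2$. Since $\absv{X_1}a=0$ and $\absv{X_2}b=0$, the compression of $S$ to $\operatorname{span}\{a,b\}$ is
$\diag\bigl(a^{*}\absv{X_2}a,\ b^{*}\absv{X_1}b\bigr)=\diag\bigl(\tfrac{1+\sqrt3}{\sqrt{8+2\sqrt3}},\ \tfrac{3}{\sqrt{11}}\bigr)$.
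Both entries are below $1$, so $\lambda_2(S)<1$ rigorously.

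\textbf{The structural idea your search lacked.} Choose orthonormal $a,b$ with $X_1a=0$ and $X_1b=b$, so that $X_2a=a$ and $X_2b=0$.
\begin{itemize}
\item The compression of $S$ to $\operatorname{span}\{a,b\}$ is then diagonal.
\item Cauchy--Schwarz gives $b^{*}\absv{X_1}b\le\|X_1b\|=1$ and $a^{*}\absv{X_2}a\le\|X_2a\|=1$.
\item Both inequalities are strict exactly when $X_1c$, for a unit $c\perp a,b$, has nonzero components along both $a$ and $b$.
\end{itemize}
Any pair built this way gives $\lambda_2(S)<1$.
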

	
	\begin{proof}
		Set
		\[
		X_1 :=
		\begin{pmatrix}
			-1 & -1 & 0\\
			\phantom{-}1 & \phantom{-}1 & 0\\
			-1 & -1 & 1
		\end{pmatrix},
		\qquad
		X_2 := \Id_3 - X_1 =
		\begin{pmatrix}
			2 & 1 & 0\\
			-1 & 0 & 0\\
			1 & 1 & 0
		\end{pmatrix}.
		\]
		Then $X_1+X_2=\Id_3$, hence $X=\Id_3$ is expansive.
		A direct computation (e.g.\ via spectral decomposition of $X_k^* X_k$) shows that
		the eigenvalues of the positive semidefinite matrix $\absv{X_1}+\absv{X_2}$ are
		\[
		\lambda_1 \approx 5.11522680,\qquad
		\lambda_2 \approx 0.88353915,\qquad
		\lambda_3 \approx 0.70372677.
		\]
		Hence $\lambda_2(\absv{X_1}+\absv{X_2})\approx 0.8835<1$, as claimed.
	\end{proof}
	\section{Proofs of Theorems~\ref{thm:abs-vs-sym}--\ref{thm:sym-vs-qsym} and Proposition~\ref{prop:no-constant}}\label{sec:equivalence}
		\begin{proof}[Proof of Theorem~\ref{thm:abs-vs-sym}]
	\emph{A sharp lower bound.}	Since $|Z|\le |Z|+|Z^*|$, monotonicity implies $\||Z|\|\le  \||Z|+|Z^*|\|$, hence
		\[
		\frac12\||Z|\|\le \left\|\frac{|Z|+|Z^*|}{2}\right\|=\||Z|_{\mathrm{sym}}\|.
		\]
\medskip
\noindent\emph{Sharpness of the lower constant $\tfrac12$.}
Let $Z=E_{12}\in\mathbb M_2$. Then
$|Z|=\mathrm{diag}(0,1)$ and $|Z^*|=\mathrm{diag}(1,0)$, hence
$|Z|_{\mathrm{sym}}=\frac{|Z|+|Z^*|}{2}=\frac12 I_2$.
For the operator norm,
$\||Z|_{\mathrm{sym}}\|_\infty=\tfrac12$ while $\||Z|\|_\infty=1$,
so equality holds in $\frac12\||Z|\|\le \||Z|_{\mathrm{sym}}\|$.
		
		\medskip
\noindent	\emph{A sharp upper bound.} The triangle inequality and unitary invariance give
		\[
		\||Z|_{\mathrm{sym}}\|
		=\left\|\frac{|Z|+|Z^*|}{2}\right\|
		\le \frac{\|Z\|+\|Z^*\|}{2}
		=\||Z|\|.
		\]
Sharpness follows by taking  $Z$ to be a normal matrix.
	\end{proof}
Before proving Theorem~\ref{thm:sym-vs-qsym}, we recall the following subadditivity inequality due to Bourin and Uchiyama~\cite{BU07}.

	\begin{lem}[\cite{BU07}]\label{lem:Bourin-Uchiyama}
		Let $A,B\in \Mn$ be positive semidefinite and $f$ is a nonnegative concave function defined on $[0,\infty)$.  Then
		\[
		\|f(A+B)\|\le \|f(A)+f(B)\|.
		\]
		In particular, for the trace norm, we have
		\[
		\tr	f(A+B)\le  \tr \left(  f( A)+f(B) \right).
		\]
	\end{lem}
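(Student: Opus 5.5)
The statement to prove is Lemma~\ref{lem:Bourin-Uchiyama}. By Fan's dominance theorem it suffices to prove it for every Ky Fan $k$-norm $\|X\|_{(k)}=\sum_{j\le k}s_j(X)$, $1\le k\le n$. Two preliminary reductions come first. A nonnegative concave function on $[0,\infty)$ is automatically nondecreasing, since a decreasing concave function eventually becomes negative. By continuity and a standard approximation argument (concave piecewise linear interpolants, then passing to the limit in norm), we may assume $f$ has the form
\[
f(t)=\alpha+\beta t+\int_{(0,\infty)}\min\{t,s\}\,d\mu(s),\qquad \alpha,\beta\ge 0,\ \mu\ge 0 .
\]
The point of this representation is that every $f(A+B)$ becomes a positive combination of the building blocks $1$, $t$, and $g_s(t)=\min\{t,s\}$.

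The next step linearizes the left-hand side. Let $P$ be the spectral projection of $A+B$ onto the span of eigenvectors for its $k$ largest eigenvalues. Every building block $h$ is nondecreasing, so $h(A+B)$ is diagonal in the same basis with eigenvalues in the same order. Hence, simultaneously for all building blocks,
\[
\|h(A+B)\|_{(k)}=\tr P\,h(A+B),
\]
and therefore
\[
\|f(A+B)\|_{(k)}=\alpha k+\beta\,\tr P(A+B)+\int \tr P\,g_s(A+B)\,d\mu(s).
\]
On the other side, Ky Fan's maximum principle gives $\tr P\,(f(A)+f(B))\le \|f(A)+f(B)\|_{(k)}$. So it suffices to prove, term by term,
\[
\tr P\,h(A+B)\le \tr P\,\bigl(h(A)+h(B)\bigr)
\]
for each building block $h$. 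For $h=1$ this holds because $2\alpha k\ge \alpha k$. For $h=t$ it is an equality.

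The main obstacle is the truncation $g_s(t)=\min\{t,s\}$. The goal is
\[
\tr P\min\{A+B,s\}\le \tr P\bigl(\min\{A,s\}+\min\{B,s\}\bigr),
\]
where the use of the top spectral projection $P$ of $A+B$ should be essential. I would split according to where the eigenvalues of $A+B$ sit relative to $s$. If all $k$ top eigenvalues are at most $s$, then $P\min\{A+B,s\}P=P(A+B)P$. Since $x\mapsto x-\min\{x,s\}=(x-s)_+$ is nonnegative, we need $\tr P\,((A-s)_++(B-s)_+)\le 0$ for the wanted inequality, which fails in general, so this naive case split does not close. I would therefore first try a different route, following Bourin--Uchiyama, that works directly with $f$. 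Write $A+B=T^*T$ with $T=\begin{pmatrix}A^{1/2}\\ B^{1/2}\end{pmatrix}$, so that $f(A+B)\oplus f(0)I_n$ is unitarily congruent to $f(TT^*)$. Here $TT^*$ is a positive block matrix with diagonal blocks $A$ and $B$. The remaining task would be to show that, for concave nonnegative $f$, the Ky Fan norms of $f$ of a positive block matrix are dominated by those of $f(A)+f(B)$, the latter viewed compressed to $n\times n$ size. I would attempt this by using the top-$k$ spectral subspace of $TT^*$ together with Jensen's trace inequality for concave functions under compressions. This last step is the one I expect to be hardest.
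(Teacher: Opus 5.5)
\textbf{Verdict: there is a genuine gap.} The paper gives no proof of this lemma; it quotes it from Bourin--Uchiyama. Your proposal does not supply the one nontrivial inequality either.

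The preliminary reductions are fine. These are Fan dominance, monotonicity of $f$, the representation $f=\alpha+\beta t+\int\min\{t,s\}\,d\mu(s)$, and linearizing the left side with the top-$k$ spectral projection $P$ of $A+B$.

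You then abandon this route for a wrong reason. In the case you examine, all top $k$ eigenvalues of $A+B$ are $\le s$, so $\lambda_1(A+B)\le s$. Hence $A,B\le A+B\le sI$ and $(A-s)_+=(B-s)_+=0$, so the inequality holds with equality rather than ``failing in general''. The real difficulty is the case where some top eigenvalues exceed $s$. There you need $\tr P\bigl((A-s)_++(B-s)_+\bigr)\le \tr P\,(A+B-s)_+$. This would follow from the convex companion inequality $\|g(A)+g(B)\|\le\|g(A+B)\|$ with $g(t)=(t-s)_+$, because $\tr P\bigl(g(A)+g(B)\bigr)\le\|g(A)+g(B)\|_{(k)}\le\|g(A+B)\|_{(k)}=\tr P\,g(A+B)$. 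But that is a theorem of the same depth as the lemma, and nothing in your proposal proves it.

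The second route does not close the gap either. For the specific $M=TT^*$, your ``remaining task'' is the lemma restated, since $\|f(M)\|_{(k)}=\|f(A+B)\|_{(k)}$ for $k\le n$. For a general positive block matrix with diagonal blocks $A,B$ it is false, already for $f(t)=t$. Take $M=\begin{pmatrix}E_{11}&E_{12}\\ E_{21}&E_{22}\end{pmatrix}=ww^*$ with $w=(1,0,0,1)^{\top}$. Then $\|M\|_\infty=2$, while $\|E_{11}+E_{22}\|_\infty=1$. So the particular off-diagonal block $A^{1/2}B^{1/2}$ must be used, and you give no mechanism for using it.

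Jensen's trace inequality applied to the two block compressions gives $\tr f(A+B)+nf(0)=\tr f(TT^*)\le\tr f(A)+\tr f(B)$. That proves the trace statement, so the ``in particular'' part is fine. For Ky Fan $k$-norms with $k<n$, though, treating the two blocks separately lets $f(A)$ and $f(B)$ be tested on different $k$-dimensional subspaces. At best this yields $\|f(A)\|_{(k)}+\|f(B)\|_{(k)}$, which can be strictly larger than $\|f(A)+f(B)\|_{(k)}$; take $A=E_{11}$, $B=E_{22}$, $f(t)=t$, $k=1$.

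The missing ingredient is a single rank-$k$ projection $F$ with $\sum_{j\le k}f(\lambda_j(A+B))\le\tr F\bigl(f(A)+f(B)\bigr)$. This coupling is the actual content of the lemma, and without it the symmetric-norm inequality remains unproved.
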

	\begin{proof}[Proof of Theorem~\ref{thm:sym-vs-qsym}]
		\emph{A sharp lower bound.}
		Since $t\mapsto t^\frac{1}{2}$ is operator concave on $[0,\infty)$,  we have
\begin{equation*}
		\frac{|Z|+|Z^*|}{2} \le \left(\frac{|Z|^2+|Z^*|^2}{2}\right)^\frac{1}{2},
\end{equation*}
		i.e.\ $|Z|_{\mathrm{sym}}\le |Z|_{\mathrm{qsym}}$. By monotonicity,
		$\||Z|_{\mathrm{sym}}\|\le \||Z|_{\mathrm{qsym}}\|$.
Sharpness can be obtained by  taking
	$Z$ to be normal.
		
		\medskip
	\noindent	\emph{A sharp upper bound.}
		We invoke the following known subadditivity inequality for symmetric norms obtained by Bourin--Uchiyama's Theorem (Lemma~\ref{lem:Bourin-Uchiyama}):
		if $X,Y\ge0$ and $f:[0,\infty)\to[0,\infty)$ is concave, then for every unitarily invariant norm,
		\[
		\|f(X+Y)\|\ \le\ \|f(X)+f(Y)\|.
		\]
		Applying it with $X=|Z|^2$, $Y=|Z^*|^2$ and $f(t)=\sqrt{t}$ gives
		\[
		\left\|(|Z|^2+|Z^*|^2)^\frac{1}{2}\right\|\le \||Z|+|Z^*|\|.
		\]
		Using homogeneity,
		\[
		\bigl\||Z|_{\mathrm{qsym}}\bigr\|
		=\frac{\sqrt2}2	\left\|(|Z|^2+|Z^*|^2)^\frac{1}{2}\right\|
		\le \frac{\sqrt2}2 \||Z|+|Z^*|\|
		=\sqrt2\,\bigl\||Z|_{\mathrm{sym}}\bigr\|.
		\]

		Take $Z=\begin{pmatrix}0&1\\0&0\end{pmatrix}$, 
		then $|Z|_{\mathrm{sym}}=\frac12 I_2$ and $|Z|_{\mathrm{qsym}}=\frac{\sqrt{2}}{2} I_2$.
	For the operator norm $\|\cdot\|_\infty$, we have
$\bigl\||Z|_{\mathrm{qsym}}\bigr\|_\infty=\sqrt2\,\bigl\||Z|_{\mathrm{sym}}\bigr\|_\infty$,
		showing $\sqrt2$ is sharp.
	\end{proof}
		\begin{proof}[Proof of Theorem~\ref{thm:abs-vs-qsym}]
		\emph{A sharp lower bound.}
		Since $\frac{|Z|^2}{2}\le \frac{|Z|^2+|Z^*|^2}{2}$, by the operator monotonicity of the function $t\mapsto t^\frac{1}{2}$, we have
		\[
		\frac{\sqrt2}2 |Z| \le \left(\frac{|Z|^2+|Z^*|^2}{2}\right)^\frac{1}{2}=|Z|_{\mathrm{qsym}}.
		\]
		This implies that
		$	\frac{\sqrt2}2\|Z\|\le \||Z|_{\mathrm{qsym}}\|$.
		
		For $Z=\begin{pmatrix}0&1\\0&0\end{pmatrix}$, then $|Z|_{\mathrm{qsym}}=\frac{\sqrt2}2I_2$ and $|Z|=\mathrm{diag}(0,1)$.
		For the operator norm $\|\cdot\|_\infty$,
		$
		\frac{\sqrt2}2\|Z\|_\infty= \||Z|_{\mathrm{qsym}}\|_\infty,
		$
		so the lower constant $	\frac{\sqrt2}2$ is sharp.
		
		\medskip
		\noindent\emph{A sharp upper bound.}
		Combine Theorem~\ref{thm:sym-vs-qsym} with Theorem~\ref{thm:abs-vs-sym}:
		\[
		\||Z|_{\mathrm{qsym}}\|\le \sqrt2\,\||Z|_{\mathrm{sym}}\|\le \sqrt2\,\||Z|\|.
		\]
		
		For $Z=\begin{pmatrix}0&1\\0&0\end{pmatrix}$, then $|Z|_{\mathrm{qsym}}=\frac{\sqrt2}2I_2$ and $|Z|=\mathrm{diag}(0,1)$.	For the trace norm $\|\cdot\|_1$,
		\[
		\||Z|\|_1=1,\qquad \||Z|_{\mathrm{qsym}}\|_1=\sqrt2,
		\]
		hence $\||Z|_{\mathrm{qsym}}\|_1=\sqrt2\||Z|\|_1$, showing the upper constant $\sqrt2$ is sharp.
	\end{proof}
We now verify the claim made at the end of Subsection~\ref{subsec:operator-triangle}.
\begin{prop}\label{prop:no-constant}
	There is no universal constant $c>0$ such that for every $X\in\mathbb M_n$
	there exists a unitary $U$ satisfying
	\[
	|X|_{\mathrm{qsym}}\le c\,U|X|_{\mathrm{sym}}U^*.
	\]
\end{prop}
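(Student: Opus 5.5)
The plan is to argue by contradiction using the eigenvalue-wise consequence \eqref{eq:eigwise-qsym-sym}, and to exhibit an explicit $2\times2$ family along which the ratio $\lambda_2(|X|_{\mathrm{qsym}})/\lambda_2(|X|_{\mathrm{sym}})$ is unbounded.

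\emph{Reduction to eigenvalues.} Suppose $|X|_{\mathrm{qsym}}\le c\,U|X|_{\mathrm{sym}}U^*$ for some unitary $U$. By Weyl's monotonicity principle, and since conjugation by $U$ preserves the spectrum, we get $\lambda_j(|X|_{\mathrm{qsym}})\le c\,\lambda_j(|X|_{\mathrm{sym}})$ for every $j$. So it suffices to find matrices $X$ with $\lambda_2(|X|_{\mathrm{qsym}})/\lambda_2(|X|_{\mathrm{sym}})\to\infty$. For $n>2$, replacing $X$ by $X\oplus 0_{n-2}$ adds only zero eigenvalues, so the two top eigenvalues are unchanged. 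Hence $n=2$ suffices.

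\emph{The family.} For $a>0$ take
\[
X_a=\begin{pmatrix} a&1\\ 0&0\end{pmatrix},\qquad s:=\sqrt{1+a^2}.
\]
Then $X_a=e_1w^*$ with $w=(a,1)^{\top}$, so by the rank-one computation of Lemma~\ref{lem:rank1} (after scaling) we have
\[
|X_a|=s\,P_v,\qquad |X_a^*|=s\,P_{e_1},
\]
where $v=w/s$ and $P$ denotes a rank-one orthogonal projection. Likewise $|X_a|^2=s^2P_v$ and $|X_a^*|^2=s^2P_{e_1}$. The sum of two rank-one projections $P_v+P_{e_1}$ has eigenvalues $1\pm|\langle v,e_1\rangle|=1\pm a/s$. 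This gives
\[
\lambda_2(|X_a|_{\mathrm{sym}})=\frac{s}{2}\Bigl(1-\frac as\Bigr)=\frac{s-a}{2},
\qquad
\lambda_2(|X_a|_{\mathrm{qsym}})=s\Bigl(\frac{1-a/s}{2}\Bigr)^{1/2}.
\]

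\emph{Asymptotics.} As $a\to\infty$ we have $1-a/s\sim 1/(2a^2)$. Therefore $\lambda_2(|X_a|_{\mathrm{sym}})\sim 1/(4a)\to0$, while $\lambda_2(|X_a|_{\mathrm{qsym}})\to 1/2$. The ratio thus grows like $2a$, and no fixed $c$ can satisfy the required inequality for all $a$. This gives the contradiction.

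\emph{Main obstacle.} The only real point is the choice of a family in which $|X|$ and $|X^*|$ become nearly parallel rank-one operators of growing size. In that regime the arithmetic mean loses its second eigenvalue much faster than the square root of the quadratic mean does. The remaining eigenvalue computation is routine once the two-projection formula $1\pm|\langle v,e_1\rangle|$ is used.
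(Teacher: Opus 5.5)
Your argument is correct and is essentially the paper's. Both moduli are positively homogeneous and invariant under $X\leftrightarrow X^*$, and your $X_a/s=e_1v^*$ is exactly $X_\theta^*$ for the paper's family with $\cos\theta=a/s$, so your eigenvalue ratio coincides with the paper's $\sqrt{2/(1-\cos\theta)}$: the growth of $s$ plays no role, and only the vanishing angle between the ranges of $|X|$ and $|X^*|$ matters. Your $X\oplus 0_{n-2}$ remark is a useful addition that covers $n>2$, which the paper leaves implicit.
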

\begin{proof} Set
$
X_\theta:=\begin{pmatrix}
	\cos\theta & 0\\
	\sin\theta & 0
\end{pmatrix}\in\mathbb{M}_2,
$ where $\theta\in(0,\pi/2)$.
Now we compute $|X_\theta|$ and $|X_\theta^*|$.
\[
X_\theta^*X_\theta=
\begin{pmatrix}1&0\\0&0\end{pmatrix}
=:P
\quad\Longrightarrow\quad
|X_\theta|=(X_\theta^*X_\theta)^\frac{1}{2}=P.
\]

\[
X_\theta X_\theta^*=
\begin{pmatrix}
	\cos^2\theta & \cos\theta\sin\theta\\
	\cos\theta\sin\theta & \sin^2\theta
\end{pmatrix}
=:Q_\theta,
\qquad Q_\theta^2=Q_\theta
\quad\Longrightarrow\quad
|X_\theta^*|=(X_\theta X_\theta^*)^\frac{1}{2}=Q_\theta.
\]
Next, we compute two symmetric moduli of $X$:
\[
|X_\theta|_{\rm sym}=\frac{P+Q_\theta}{2},
\qquad
|X_\theta|_{\rm qsym}=
\left(\frac{P^2+Q_\theta^2}{2}\right)^\frac{1}{2}
=
\left(\frac{P+Q_\theta}{2}\right)^\frac{1}{2}
=
\bigl(|X_\theta|_{\rm sym}\bigr)^\frac{1}{2}.
\]

Let $S_\theta:=P+Q_\theta$. Then
\[
S_\theta=
\begin{pmatrix}
	1+\cos^2\theta & \cos\theta\sin\theta\\
	\cos\theta\sin\theta & \sin^2\theta
\end{pmatrix},
\quad
\tr S_\theta=2,
\quad
\det S_\theta=\sin^2\theta,
\]
so the eigenvalues of $S_\theta$ are
\[
\lambda_{1,2}(S_\theta)=1\pm\cos\theta.
\]
Hence
\[
\lambda_{1,2}\bigl(|X_\theta|_{\rm sym}\bigr)=\frac{1\pm\cos\theta}{2},
\qquad
\lambda_{1,2}\bigl(|X_\theta|_{\rm qsym}\bigr)=\sqrt{\frac{1\pm\cos\theta}{2}}.
\]
Compute the ratio of the smallest eigenvalues of these two symmetric moduli,
\[
\frac{\lambda_2(|X_\theta|_{\rm qsym})}{\lambda_2(|X_\theta|_{\rm sym})}
=
\frac{\sqrt{\frac{1-\cos\theta}{2}}}{\frac{1-\cos\theta}{2}}
=
\sqrt{\frac{2}{1-\cos\theta}}
\longrightarrow +\infty
\quad (\theta\to 0^+).
\]
Using $1-\cos\theta<\theta^2/2$ for $\theta\neq 0$, we have
\[
\frac{\lambda_2(|X_\theta|_{\rm qsym})}{\lambda_2(|X_\theta|_{\rm sym})}
=
\sqrt{\frac{2}{1-\cos\theta}}
>
\sqrt{\frac{2}{\theta^2/2}}
=
\frac{2}{\theta},
\]
so for any $c>0$ choose $\theta<2/c$ to get
\[
\lambda_2(|X_\theta|_{\rm qsym})>c\,\lambda_2(|X_\theta|_{\rm sym}).
\]
Therefore no finite constant $c$ can satisfy
\[
\lambda_j(|X|_{\rm qsym})\le c\,\lambda_j(|X|_{\rm sym})
\quad\text{for all }X\in\mathbb{M}_2\text{ and all }j.
\]
\end{proof}

\section{Sharpness of the constant in \eqref{eq:Lee-uin}  and proof of Theorem~\ref{thm:Lee-optimal}}\label{sec:sharpness}
In this section, we first verify sharpness of the constant $\sqrt{m}$ in \eqref{eq:Lee-uin} for all $n\ge m$.
\subsection{Sharpness of the constant $\sqrt{m}$ in \eqref{eq:Lee-uin} for all $n\ge m$}
	\begin{eg}\label{eg:Lee-uin-sharp}
		Let $n\ge m$. For $k=1,\dots,m$, define
		\[
		A_k:=E_{1k}\in\mathbb M_n,
		\]
		where $E_{ij}$ denotes the matrix unit in $\mathbb M_n$.
		Then
		\[
		A_k^*A_k=E_{k1}E_{1k}=E_{kk},
		\qquad\text{so}\qquad
		|A_k|=(A_k^*A_k)^\frac{1}{2}=E_{kk}.
		\]
		Hence
		\[
		\sum_{k=1}^m |A_k|
		=\sum_{k=1}^m E_{kk},
		\qquad\Longrightarrow\qquad
		\Bigl\|\sum_{k=1}^m |A_k|\Bigr\|_\infty
		=\Bigl\|\mathrm{diag}(\underbrace{1,\dots,1}_{m},0,\dots,0)\Bigr\|_\infty=1.
		\]
		On the other hand,
		\[
		\sum_{k=1}^m A_k=\sum_{k=1}^m E_{1k},
		\]
		which is the $n\times n$ matrix whose first row is $(1,\dots,1,0,\dots,0)$ (with $m$ ones) and all other rows are zero. Therefore,
		\[
		\left(\sum_{k=1}^m A_k\right)\left(\sum_{k=1}^m A_k\right)^*
		=
		\left(\sum_{k=1}^m E_{1k}\right)\left(\sum_{k=1}^m E_{k1}\right)
		=
		\sum_{k=1}^m E_{11}
		=
		m\,E_{11},
		\]
		so the largest singular value of $\sum_{k=1}^m A_k$ equals $\sqrt m$, i.e.
		\[
		\Bigl\|\sum_{k=1}^m A_k\Bigr\|_\infty=\sqrt m.
		\]
		Consequently, equality holds in \eqref{eq:Lee-uin}:
		\[
		\Bigl\|\sum_{k=1}^m A_k\Bigr\|_\infty
		=
		\sqrt m\,
		\Bigl\|\sum_{k=1}^m |A_k|\Bigr\|_\infty,
		\]
		and hence the constant $\sqrt m$ in \eqref{eq:Lee-uin} is sharp for every $n\ge m$.
	\end{eg}
	
	\subsection{Proof of Theorem~\ref{thm:Lee-optimal}}
		The following result is standard in matrix analysis.
	\begin{lem}[{\cite[p.~13, Proposition~1.3.2]{Bha07}}]\label{lem:contraction}
		Let $A,B\in\Mn$ be positive semidefinite and let $X\in\Mn$. Then
		\[
		\begin{pmatrix}
			A & X\\
			X^* & B
		\end{pmatrix}\ge 0
		\quad\Longleftrightarrow\quad
		X=A^{\frac{1}{2}}KB^{\frac{1}{2}}\ \text{for some contraction }K.
		\]
	\end{lem}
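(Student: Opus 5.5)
The plan is to prove both implications by congruence with the block diagonal matrix $D:=\diag(A^{\frac12},B^{\frac12})$. Congruence preserves positivity. The one genuine subtlety is that $A$ or $B$ may be singular, and I plan to handle that by a perturbation and compactness argument.

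\emph{($\Leftarrow$).} Suppose $X=A^{\frac12}KB^{\frac12}$ with $\|K\|_\infty\le 1$. Then
\[
\begin{pmatrix} A & X\\ X^* & B\end{pmatrix}
= D\begin{pmatrix} I & K\\ K^* & I\end{pmatrix} D .
\]
So it suffices to show that the middle factor is positive semidefinite. This follows from the decomposition
\[
\begin{pmatrix} I & K\\ K^* & I\end{pmatrix}
=\begin{pmatrix} I\\ K^*\end{pmatrix}\begin{pmatrix} I & K\end{pmatrix}
+\begin{pmatrix} 0&0\\0& I-K^*K\end{pmatrix}.
\]
Both summands are positive semidefinite, the second because $K^*K\le I$ exactly when $K$ is a contraction.

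\emph{($\Rightarrow$), invertible case.} Assume first that $A$ and $B$ are positive definite. Set $K:=A^{-\frac12}XB^{-\frac12}$. Conjugating the positive block matrix by $\diag(A^{-\frac12},B^{-\frac12})$ gives $\begin{pmatrix} I & K\\ K^* & I\end{pmatrix}\ge 0$. Testing this against vectors $(x,-Ky)$ with $y$ a unit vector gives
\[
\|x\|^2-2\Re\langle x,Ky\rangle+\|Ky\|^2\ge 0 .
\]
Taking $x=Ky$ yields $\|Ky\|^2\le 1$, so $K$ is a contraction and $X=A^{\frac12}KB^{\frac12}$.

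\emph{($\Rightarrow$), general case.} This is the main obstacle, since $A^{-\frac12}$ may not exist. For $\varepsilon>0$ the block matrix with diagonal blocks $A+\varepsilon I$ and $B+\varepsilon I$ is still positive. By the invertible case there are contractions $K_\varepsilon$ with
\[
X=(A+\varepsilon I)^{\frac12}K_\varepsilon(B+\varepsilon I)^{\frac12}.
\]
The unit ball of $\Mn$ is compact, so along a sequence $\varepsilon_j\to0$ we have $K_{\varepsilon_j}\to K$ with $\|K\|_\infty\le1$. The square root is continuous on positive semidefinite matrices, so letting $j\to\infty$ gives $X=A^{\frac12}KB^{\frac12}$.
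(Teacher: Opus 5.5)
Your overall route is the standard one. The paper gives no proof and only cites Bhatia, Proposition~1.3.2, which argues exactly as you do. It uses congruence by $\diag(A^{-1/2},B^{-1/2})$ in the strictly positive case, reducing to the fact that $\begin{pmatrix} I&K\\ K^*&I\end{pmatrix}\ge 0$ if and only if $\|K\|_\infty\le 1$, and then a continuity argument for singular $A,B$. You make that continuity step explicit via the $\varepsilon I$-perturbation and compactness of the unit ball, and it is correct as written. Your $(\Leftarrow)$ direction is also fine.

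There is, however, a computational slip in the invertible case of $(\Rightarrow)$, and as written that step fails. With the test vector $(x,-Ky)$, the quadratic form of $\begin{pmatrix} I&K\\ K^*&I\end{pmatrix}$ is $\|x\|^2-2\Re\langle x,K^2y\rangle+\|Ky\|^2$, not the expression you wrote. Moreover, the inequality you wrote, $\|x\|^2-2\Re\langle x,Ky\rangle+\|Ky\|^2\ge 0$, is just $\|x-Ky\|^2\ge 0$, so substituting $x=Ky$ gives $0\ge 0$ and no bound on $\|Ky\|$.

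The fix is to test against $(x,-y)$ with $\|y\|=1$. This gives $\|x\|^2-2\Re\langle x,Ky\rangle+1\ge 0$, and then $x=Ky$ yields $1-\|Ky\|^2\ge 0$. Alternatively, the congruence
\[
\begin{pmatrix} I&0\\ -K^*&I\end{pmatrix}\begin{pmatrix} I&K\\ K^*&I\end{pmatrix}\begin{pmatrix} I&-K\\ 0&I\end{pmatrix}=\begin{pmatrix} I&0\\ 0&I-K^*K\end{pmatrix}
\]
gives both directions of this equivalence at once, and would also replace your separate argument for $(\Leftarrow)$. With that correction your proof is complete.
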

	Next, we  recall a basic H\"{o}lder inequality for Schatten $p$-norms.
	In the special case $m=2$, it appears in {\cite[p.~95, Exercise IV.2.7]{Bha97}} by taking the trace norm $\|\cdot\|_1$.
	\begin{lem}\label{lem:Holder-inequality}
		Let $m\ge 2$, let $1\le r\le \infty$, and let $1\le p_1,\dots,p_m\le \infty$ satisfy
		\[
		\frac1r=\frac1{p_1}+\cdots+\frac1{p_m}.
		\]
		Then for all $A_1,\dots,A_m\in\mathbb M_n$,
		\[
		\bigl\|A_1A_2\cdots A_m\bigr\|_r
		\ \le\ 
		\|A_1\|_{p_1}\,\|A_2\|_{p_2}\cdots \|A_m\|_{p_m},
		\]
		where $\|\cdot\|_p$ denotes the Schatten $p$-norm.
	\end{lem}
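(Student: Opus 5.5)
The statement to prove is the generalized H\"older inequality of Lemma~\ref{lem:Holder-inequality}. I will argue by induction on $m$, taking the two-factor case as the base.

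\emph{Base case $m=2$.} We need $\|AB\|_r\le\|A\|_{p}\|B\|_{q}$ whenever $1/r=1/p+1/q$. The route I would take goes through majorization of singular values. Horn's inequality gives, for each $k$,
\[
\prod_{j=1}^k s_j(AB)\le\prod_{j=1}^k s_j(A)s_j(B).
\]
Log-majorization implies weak majorization of any increasing function $\varphi$ for which $\varphi(e^t)$ is convex. Applying this with $\varphi(t)=t^r$ yields
\[
\sum_j s_j(AB)^r\le\sum_j s_j(A)^r s_j(B)^r .
\]
Since $p/r$ and $q/r$ are conjugate exponents, the classical scalar H\"older inequality bounds the right-hand side by $\bigl(\sum_j s_j(A)^p\bigr)^{r/p}\bigl(\sum_j s_j(B)^q\bigr)^{r/q}$. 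Taking $r$-th roots gives the claim.

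The cases where some exponent is infinite need separate handling. If $p=\infty$, then $q=r$, and the bound $s_j(AB)\le\|A\|_\infty s_j(B)$ suffices. The case $r=\infty$ forces $p=q=\infty$, which is just submultiplicativity of the operator norm.

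\emph{Inductive step.} Assume the result holds for $m-1$ factors. Define $s$ by $1/s=1/p_2+\cdots+1/p_m$, so that $1/r=1/p_1+1/s$. Then $s\ge r\ge1$, so $\|\cdot\|_s$ is a genuine Schatten norm. The base case gives $\|A_1(A_2\cdots A_m)\|_r\le\|A_1\|_{p_1}\|A_2\cdots A_m\|_s$, and the induction hypothesis bounds the second factor by $\prod_{k\ge2}\|A_k\|_{p_k}$.

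\emph{Main obstacle.} The only nontrivial ingredient is the base case, specifically the passage from Horn's product inequality to the weak majorization of $r$-th powers. This is a standard fact (see Bhatia, Ch.~II and IV), and I would cite it rather than reprove it. The remaining steps are routine bookkeeping on the exponents.
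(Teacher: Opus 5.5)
Your argument is correct. The paper gives no proof of its own: it only cites the two-factor case from Bhatia (Exercise~IV.2.7, specialized to the trace norm) and leaves the $m$-factor version implicit. Your base case via Horn's inequality and log-majorization, followed by induction on $m$ with $1/s=1/p_2+\cdots+1/p_m$ and $s\ge r\ge 1$, is the standard argument behind that citation. One small omission: you treat $p=\infty$ but never mention $q=\infty$ with $p<\infty$. It is symmetric to the case you handle, and your majorization step also covers it directly when combined with the trivial $(1,\infty)$ scalar H\"older inequality.
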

	
We also need the following lemma.
	\begin{lem}\label{lem:pos-block-MA}
		Let $A\in\mathbb M_n$. Then
		\[
		\begin{pmatrix}
			|A^*| & A\\
			A^* & |A|
		\end{pmatrix}\ge 0.
		\]
	\end{lem}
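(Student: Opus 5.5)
The plan is to use the polar decomposition of $A$. I would write $A=U|A|$ with $U$ unitary; such a $U$ exists for square matrices. Then
\[
AA^*=U|A|^2U^*,\qquad\text{so}\qquad |A^*|=(AA^*)^{\frac12}=U|A|U^*.
\]
The whole argument then reduces to showing that the block matrix in the statement is a congruence of a manifestly positive matrix.

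The key step is the factorization
\[
\begin{pmatrix}
|A^*| & A\\
A^* & |A|
\end{pmatrix}
=
\begin{pmatrix}
U & 0\\
0 & \Id
\end{pmatrix}
\begin{pmatrix}
|A| & |A|\\
|A| & |A|
\end{pmatrix}
\begin{pmatrix}
U^* & 0\\
0 & \Id
\end{pmatrix}.
\]
To verify it, I multiply out the blocks. The $(1,1)$ block is $U|A|U^*=|A^*|$. The $(1,2)$ block is $U|A|=A$. The $(2,1)$ block is $|A|U^*=(U|A|)^*=A^*$. The $(2,2)$ block is $|A|$.

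It remains to check that the middle matrix is positive semidefinite. I would write it as
\[
\begin{pmatrix}
|A|^{\frac12}\\
|A|^{\frac12}
\end{pmatrix}
\begin{pmatrix}
|A|^{\frac12} & |A|^{\frac12}
\end{pmatrix}\ge 0.
\]
Since congruence preserves positivity, the block matrix in the statement is positive semidefinite.

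An alternative route goes through Lemma~\ref{lem:contraction}. I would write $A=|A^*|^{\frac12}\,U\,|A|^{\frac12}$ with $K=U$ a contraction. This identity holds because $|A^*|^{\frac12}=U|A|^{\frac12}U^*$, so the product equals $U|A|^{\frac12}|A|^{\frac12}=A$. I see no real obstacle in either route. The only point needing care is that the polar factor can be chosen unitary rather than merely a partial isometry, and this is automatic in $\Mn$.
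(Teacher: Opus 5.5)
Your proof is correct and is essentially the paper's argument. The paper also takes $A=U|A|$ with $|A^*|=U|A|U^*$ and writes the block matrix directly as $\bigl(\begin{smallmatrix}U|A|^{\frac12}\\ |A|^{\frac12}\end{smallmatrix}\bigr)\bigl(\begin{smallmatrix}|A|^{\frac12}U^* & |A|^{\frac12}\end{smallmatrix}\bigr)$, which is exactly your congruence by $\diag(U,\Id)$ combined with your factorization of the middle matrix.
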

	\begin{proof}
		Let $A=U|A|$ be a polar decomposition. Then $|A^*|=U|A|U^*$ and
		\[
		\begin{pmatrix}
			|A^*| & A\\
			A^* & |A|
		\end{pmatrix}
		=
		\begin{pmatrix}
			U|A|U^* & U|A|\\
			|A|U^* & |A|
		\end{pmatrix}
		=
		\begin{pmatrix}
			U|A|^{\frac{1}{2}}\\
			|A|^{\frac{1}{2}}
		\end{pmatrix}
		\begin{pmatrix}
			|A|^{\frac{1}{2}}U^* & |A|^{\frac{1}{2}}
		\end{pmatrix}\ge 0.
		\]
	\end{proof}

Now,	we  record the optimal constant in Lee's inequality \eqref{eq:Lee-uin} when $m\ge n$.
	
	\begin{thm}\label{thm:lee-optimal-n-less-m}
 Let $m\ge n$ and
		$A_1,\dots,A_m\in\mathbb M_n$. Then
		\begin{equation}\label{eq:Lee-opt-m>n}
			\Bigl\|\sum_{k=1}^m A_k\Bigr\|\ \le\ \sqrt n\,
			\Bigl\|\sum_{k=1}^m |A_k|\Bigr\|.
		\end{equation}
		Moreover, the constant $\sqrt n$ is sharp (already for the operator norm).
	\end{thm}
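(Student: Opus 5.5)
The strategy is to sum the positive block matrices of Lemma~\ref{lem:pos-block-MA} over $k$, factor the off-diagonal block through a contraction via Lemma~\ref{lem:contraction}, and then use the fact that $n\times n$ matrices have rank at most $n$ to pass from Ky Fan norms to the trace norm.

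\textbf{Step 1 (block factorization).} Set
\[
P:=\sum_{k}|A_k^*|,\qquad Q:=\sum_k|A_k|,\qquad S:=\sum_k A_k .
\]
Summing the matrices of Lemma~\ref{lem:pos-block-MA} gives
$\begin{pmatrix}P&S\\ S^*&Q\end{pmatrix}\ge0$.
By Lemma~\ref{lem:contraction} we can write $S=P^{1/2}KQ^{1/2}$ with $K$ a contraction. Since $\tr P=\sum_k\tr|A_k^*|=\sum_k\tr|A_k|=\tr Q$, the two diagonal blocks have equal trace.

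\textbf{Step 2 (Ky Fan norms).} By Fan's dominance theorem it suffices to prove
\[
\|S\|_{(j)}\le\sqrt n\,\|Q\|_{(j)}\qquad\text{for } j=1,\dots,n .
\]
First, Lemma~\ref{lem:Holder-inequality} with exponents $(2,\infty,2)$ gives
\[
\|S\|_{(j)}\le\|S\|_1\le\|P^{1/2}\|_2\,\|K\|_\infty\,\|Q^{1/2}\|_2\le(\tr P)^{1/2}(\tr Q)^{1/2}=\tr Q .
\]
Next, $Q$ is positive semidefinite of size $n$, so $\tr Q\le (n/j)\,\|Q\|_{(j)}$. This only yields the constant $n/j$, which is useless for small $j$, so the $j$-dependence must be handled more carefully. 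Write $s_i:=s_i(S)$ and $q_i:=\lambda_i(Q)$. I expect a bound of the form
\[
\sum_{i\le j}s_i\le\Bigl(\sum_{i\le j}\lambda_i(P)\Bigr)^{1/2}\Bigl(\sum_{i\le j}q_i\Bigr)^{1/2}
\]
to fail, because $P$ and $Q$ differ. Instead I would start from the weak-majorization form of Hölder,
\[
\sum_{i\le j}s_i(P^{1/2}KQ^{1/2})\le\sum_{i\le j}s_i(P^{1/2})\,s_i(Q^{1/2})\le\Bigl(\sum_{i\le j}\lambda_i(P)\Bigr)^{1/2}\Bigl(\sum_{i\le j}q_i\Bigr)^{1/2},
\]
and then bound $\sum_{i\le j}\lambda_i(P)\le\tr P=\tr Q\le n\,q_1$. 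For $j=1$ this gives $s_1\le\sqrt{n}\,q_1$, which is the operator-norm case. For general $j$ it gives $\sqrt{n\,q_1\,\|Q\|_{(j)}}$, and $q_1\le\|Q\|_{(j)}$, so
\[
\|S\|_{(j)}\le\sqrt n\,\|Q\|_{(j)} .
\]
This closes the argument for all $j$. The one point that needs care is the weak-majorization Hölder step $\sum_{i\le j}s_i(XY)\le\sum_{i\le j}s_i(X)s_i(Y)$, applied after absorbing the contraction $K$ (using $s_i(KY)\le s_i(Y)$). This is Horn's inequality, followed by Cauchy--Schwarz.

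\textbf{Step 3 (sharpness).} Since $m\ge n$, take $A_k=E_{1k}$ for $k\le n$ and $A_k=0$ for $k>n$, as in Example~\ref{eg:Lee-uin-sharp}. Then $\|S\|_\infty=\sqrt n$ and $\sum_k|A_k|=I_n$ has operator norm $1$, so the constant $\sqrt n$ is attained for the operator norm.
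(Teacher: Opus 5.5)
Your argument is correct and is essentially the paper's proof: you sum the blocks of Lemma~\ref{lem:pos-block-MA}, factor $S=P^{1/2}KQ^{1/2}$, obtain $\|S\|_{(j)}\le\sqrt{\|P\|_{(j)}\|Q\|_{(j)}}$, use $\tr P=\tr Q$, and finish with Fan dominance. The only differences are that the paper gets the Ky Fan bound from the variational formula and H\"older rather than from Horn's inequality, and it uses $\tr Q\le (n/j)\|Q\|_{(j)}$ (yielding the sharper intermediate constant $\sqrt{n/j}$) where you use $\tr Q\le n\lambda_1(Q)\le n\|Q\|_{(j)}$. One expository fix: the inequality you say you ``expect to fail'' is true, and it is exactly the one you then prove, so that sentence should be removed.
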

	
	\begin{proof}
		Set
		\[
		S:=\sum_{k=1}^m A_k,\qquad X:=\sum_{k=1}^m |A_k|,\qquad Y:=\sum_{k=1}^m |A_k^*|.
		\]
		By Lemma~\ref{lem:pos-block-MA}, for each $k$ we have
		\[
		\begin{pmatrix}
			|A_k^*| & A_k\\
			A_k^* & |A_k|
		\end{pmatrix}\ge 0.
		\]
		Summing over $k$ yields the positive semidefinite block matrix
		\begin{equation*}
			\begin{pmatrix}
				Y & S\\
				S^* & X
			\end{pmatrix}\ge 0.
		\end{equation*}
		By the contraction factorization (Lemma~\ref{lem:contraction}), there exists a contraction $K$
		(with $\|K\|_\infty\le 1$) such that
		\begin{equation}\label{eq:factor-S}
			S=Y^{\frac{1}{2}}KX^{\frac{1}{2}}.
		\end{equation}
		
		Fix $1\le r\le n$ and consider the Ky Fan $r$-norm $\|\cdot\|_{(r)}$.
		Using the variational formula (e.g., see \cite{Bha97})
		\[
		\|S\|_{(r)}
		=\max\Big\{|\tr(U^*SV)|:\ U,V\in\mathbb C^{n\times r},\ U^*U=V^*V=I_r\Big\},
		\]
	by using H\"{o}lder inequality (Lemma~\ref{lem:Holder-inequality}),	for such $U,V$, we have the estimate
		\[
		|\tr(U^*SV)|
		=|\tr(U^*Y^{\frac{1}{2}}KX^{\frac{1}{2}}V)|
		\le \|K\|_\infty\,\|Y^{\frac{1}{2}}U\|_2\,\|X^{\frac{1}{2}}V\|_2
		\le \sqrt{\tr(U^*YU)}\,\sqrt{\tr(V^*XV)}.
		\]
		Taking the maximum gives
		\begin{equation}\label{eq:KF-step1}
			\|S\|_{(r)}\ \le\ \sqrt{\ \|Y\|_{(r)}\ \|X\|_{(r)}\ }.
		\end{equation}
		
		Next, since $Y\ge 0$ we have $\|Y\|_{(r)}\le \tr(Y)$, and
		\[
		\tr(Y)=\sum_{k=1}^m \tr(|A_k^*|)=\sum_{k=1}^m \tr(|A_k|)=\tr(X).
		\]
		On the other hand, for $X\ge 0$ with eigenvalues $\lambda_1(X)\ge\cdots\ge\lambda_n(X)\ge 0$,
		\[
		\|X\|_{(r)}=\sum_{j=1}^r \lambda_j(X)\ \ge\ r\cdot \frac{\sum_{j=1}^n \lambda_j(X)}{n}
		=\frac{r}{n}\tr(X),
		\]
		hence $\tr(X)\le \frac{n}{r}\|X\|_{(r)}$. Combining these facts gives
		\begin{equation}\label{eq:YbyX}
			\|Y\|_{(r)}\le \tr(Y)=\tr(X)\le \frac{n}{r}\|X\|_{(r)}.
		\end{equation}
		Substituting \eqref{eq:YbyX} into \eqref{eq:KF-step1} yields
		\[
		\|S\|_{(r)}\le \sqrt{\frac{n}{r}}\ \|X\|_{(r)}\le \sqrt n\,\|X\|_{(r)}
		\qquad (r=1,\dots,n).
		\]
		By Fan's dominance theorem (see, e.g., \cite[p.~93]{Bha97}), this Ky Fan majorization implies
		\[
		\|S\|\le \sqrt n\,\|X\|
		\]
		for every unitarily invariant norm, proving \eqref{eq:Lee-opt-m>n}.
		
		\smallskip
		\noindent\emph{Sharpness.}
		Let $m>n$ and set $A_k:=E_{1k}\in\mathbb M_n$ for $k=1,\dots,n$, and $A_{n+1}=\cdots=A_m:=0$.
		Then $|A_k|=E_{kk}$ for $k\le n$, so $\|\sum_{k=1}^m |A_k|\|_\infty=1$.
		Moreover $\sum_{k=1}^m A_k=\sum_{k=1}^n E_{1k}$ has largest singular value $\sqrt n$, hence
		$\|\sum_{k=1}^m A_k\|_\infty=\sqrt n$. Therefore the ratio equals $\sqrt n$ and the constant
		cannot be improved.
	\end{proof}

	\begin{proof}[Proof of Theorem~\ref{thm:Lee-optimal}]	Combining \eqref{eq:Lee-uin}, Example~\ref{eg:Lee-uin-sharp} and Theorem~\ref{thm:lee-optimal-n-less-m} gives our desired result.
		
	\smallskip
	\noindent\emph{Sharpness.}
	Let $\ell:=\min\{m,n\}$ and set $A_k:=E_{1k}\in\mathbb M_n$ for $k=1,\dots,\ell$, and
	$A_{\ell+1}=\cdots=A_m:=0$.
	Then $\bigl\|\sum_{k=1}^m |A_k|\bigr\|_\infty=1$ and
	$\bigl\|\sum_{k=1}^m A_k\bigr\|_\infty=\sqrt{\ell}$.
	Therefore the factor $\sqrt{\min\{m,n\}}$ in the inequality cannot be improved.
	\end{proof}

	\section{Proofs of Theorems~\ref{thm:sum-vs-sym}, \ref{thm:sum-vs-qsym} and \ref{thm:qsym-Lee}} \label{sec:sum-vs}
We begin with two lemmas.
	\begin{lem}\label{lem:eqdiag-dom}
		Let $P\in\mathbb M_n$ be positive semidefinite and let $Z\in\mathbb M_n$.
		If $
		\begin{pmatrix}
			P & Z\\
			Z^* & P
		\end{pmatrix}\ge 0$,
		then 
		\[
		\|Z\|\le \|P\|.
		\]
	\end{lem}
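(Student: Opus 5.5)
The plan is to use the contraction factorization of Lemma~\ref{lem:contraction} together with Fan's dominance theorem, much as in the proof of Theorem~\ref{thm:lee-optimal-n-less-m}. By Lemma~\ref{lem:contraction}, positivity of the block matrix gives $Z=P^{\frac12}KP^{\frac12}$ for some contraction $K$ with $\|K\|_\infty\le 1$.

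Fix $1\le r\le n$ and use the variational formula for the Ky Fan $r$-norm:
\[
\|Z\|_{(r)}=\max\bigl\{|\tr(U^*ZV)| : U,V\in\C^{n\times r},\ U^*U=V^*V=I_r\bigr\}.
\]
For any such isometries $U,V$, the Hölder inequality (Lemma~\ref{lem:Holder-inequality}) gives
\[
|\tr(U^*P^{\frac12}KP^{\frac12}V)|\le \|K\|_\infty\,\|P^{\frac12}U\|_2\,\|P^{\frac12}V\|_2\le \sqrt{\tr(U^*PU)}\,\sqrt{\tr(V^*PV)}.
\]
Each factor satisfies $\tr(U^*PU)\le \|P\|_{(r)}$, by Ky Fan's maximum principle for the positive semidefinite matrix $P$. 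Hence $\|Z\|_{(r)}\le \|P\|_{(r)}$ for every $r=1,\dots,n$.

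Fan's dominance theorem then upgrades this family of Ky Fan inequalities to $\|Z\|\le\|P\|$ for every unitarily invariant norm.

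I do not expect a real obstacle here. The only point requiring care is the bound $\tr(U^*PU)\le\|P\|_{(r)}$ for an $n\times r$ isometry $U$, which is exactly Ky Fan's maximum principle.

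An alternative route avoids the Ky Fan norms altogether. Conjugating the block matrix by $\frac{1}{\sqrt2}\begin{pmatrix}I&I\\ -I&I\end{pmatrix}$ shows that $\begin{pmatrix}P&Z\\ Z^*&P\end{pmatrix}\ge0$ implies $P\pm \Re(e^{i\theta}Z)\ge0$, but passing from there to $\|Z\|$ is less direct. For that reason I would use the factorization argument above.
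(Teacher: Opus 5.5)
Your proof is correct. Its skeleton is the same as the paper's: the Ky Fan variational formula, then Ky Fan's maximum principle, then Fan dominance. The two arguments differ only in how $|\tr(U^*ZV)|$ is bounded.

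The paper never factors $Z$. It tests the block matrix against vectors $\begin{pmatrix}x\\ e^{i\theta}y\end{pmatrix}$ to get the arithmetic-mean estimate $|x^*Zy|\le\frac12(x^*Px+y^*Py)$. Summing over the columns of $U$ and $V$ then gives $|\tr(U^*ZV)|\le\frac12\bigl(\tr(U^*PU)+\tr(V^*PV)\bigr)$.

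You instead apply Lemma~\ref{lem:contraction} with both diagonal blocks equal to $P$, and then Cauchy--Schwarz. This yields the geometric-mean bound $\sqrt{\tr(U^*PU)\,\tr(V^*PV)}$. That is exactly the argument of Theorem~\ref{thm:lee-optimal-n-less-m} specialized to $X=Y=P$.

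What each route buys:
\begin{itemize}
\item The paper's version is more elementary: it uses only the definition of positivity, with no factorization lemma.
\item Yours reuses existing machinery and gives a slightly sharper intermediate bound, since GM $\le$ AM. It also extends directly to unequal diagonal blocks, $\|Z\|_{(r)}\le\sqrt{\|X\|_{(r)}\|Y\|_{(r)}}$ whenever $\begin{pmatrix}Y&Z\\ Z^*&X\end{pmatrix}\ge0$. The paper's bound needs an extra rescaling $U\mapsto tU$, $V\mapsto V/t$ to reach the same conclusion.
\end{itemize}
In the equal-diagonal case both give exactly $\|P\|_{(r)}$, so nothing is lost either way.

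One small point: Lemma~\ref{lem:Holder-inequality} is stated for square matrices, but you apply it to the $n\times r$ matrices $P^{1/2}U$ and $P^{1/2}V$. It is cleaner to cite Cauchy--Schwarz for the Hilbert--Schmidt inner product of $P^{1/2}U$ and $KP^{1/2}V$ directly, or to pad with zero columns.
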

	
	\begin{proof}
		Fix $x,y\in\mathbb C^n$. Choose $\theta\in\mathbb R$ so that
		$e^{i\theta}x^*Zy=-|x^*Zy|$. Positivity gives
		\[
		0\le
		\begin{pmatrix}x\\ e^{i\theta}y\end{pmatrix}^*
		\begin{pmatrix}P&Z\\ Z^*&P\end{pmatrix}
		\begin{pmatrix}x\\ e^{i\theta}y\end{pmatrix}
		= x^*Px+y^*Py-2|x^*Zy|.
		\]
		Hence
		\begin{equation}\label{eq:bilinear}
			|x^*Zy|\ \le\ \frac{x^*Px+y^*Py}{2}
			\qquad(x,y\in\mathbb C^n).
		\end{equation}
		
		Let $\|\cdot\|_{(k)}$ be the Ky Fan $k$-norm. Using the variational formula
		\[
		\|Z\|_{(k)}
		=\max\Big\{|\tr(U^*ZV)|:\ U,V\in\mathbb C^{n\times k},\ U^*U=V^*V=I_k\Big\},
		\]
		fix such $U=[u_1,\dots,u_k]$ and $V=[v_1,\dots,v_k]$. Then by \eqref{eq:bilinear},
		\[
		|\tr(U^*ZV)|
		\le \sum_{j=1}^k |u_j^*Zv_j|
		\le \frac12\sum_{j=1}^k (u_j^*Pu_j+v_j^*Pv_j)
		=\frac12\bigl(\tr(U^*PU)+\tr(V^*PV)\bigr).
		\]
		By Ky Fan's maximum principle,
		$\tr(U^*PU)\le \sum_{j=1}^k \lambda_j(P)$ and likewise for $V$.
		Therefore,
		\[
		\|Z\|_{(k)}\le \sum_{j=1}^k \lambda_j(P)=\|P\|_{(k)}
		\qquad (k=1,\dots,n).
		\]

		Finally, Fan's dominance theorem \cite[p.~93]{Bha97} implies that
		$\|Z\|\le\|P\|$ for every unitarily invariant norm.
	\end{proof}

	\begin{proof}[Proof of Theorem~\ref{thm:sum-vs-sym}]
		For each $k$,  by Lemma~\ref{lem:pos-block-MA}, we have
		\[
		\begin{pmatrix}
			2|A_k|_{\mathrm{sym}} & A_k\\
			A_k^* & 2|A_k|_{\mathrm{sym}}
		\end{pmatrix}
		=
		\begin{pmatrix}
			|A_k^*| & A_k\\
			A_k^* & |A_k|
		\end{pmatrix}
		+
		\begin{pmatrix}
		|A_k| & 0\\
			0 & |A_k^*|
		\end{pmatrix}
		\ge 0.
		\]
		Summing over $k$ yields
		\[
		\begin{pmatrix}
			2\sum_{k=1}^m|A_k|_{\mathrm{sym}} & \sum_{k=1}^m A_k\\
			(\sum_{k=1}^m A_k)^* & 2\sum_{k=1}^m|A_k|_{\mathrm{sym}}
		\end{pmatrix}\ge 0.
		\]
		Applying Lemma~\ref{lem:eqdiag-dom} gives
		\[
		\Bigl\|\sum_{k=1}^m A_k\Bigr\|
	\le 2\Bigl\|\sum_{k=1}^m |A_k|_{\mathrm{sym}}\Bigr\|.
		\]
		
		\smallskip
		\noindent\emph{Sharpness.}
		Take $n=2$ and $A_1=\cdots=A_m=\begin{pmatrix}
			0&1\\
			0&0
		\end{pmatrix}$ with the operator norm.
		Then $\|\sum_k A_k\|_\infty=m$ while
		$\|\sum_k |A_k|_{\mathrm{sym}}\|_\infty=\frac{m}{2}$, so the ratio equals $2$.
	\end{proof}
	
	\begin{proof}[Proof of Theorem~\ref{thm:sum-vs-qsym}]
	For each $k$,	by using Lemma~\ref{lem:pos-block-MA}, we obtain
		\[
	\begin{pmatrix}
		\sqrt{2}|A_k|_{\mathrm{qsym}} & A_k\\
		A_k^* & \sqrt{2}|A_k|_{\mathrm{qsym}}
	\end{pmatrix}
	=
	\begin{pmatrix}
		|A_k^*| & A_k\\
		A_k^* & |A_k|
	\end{pmatrix}
	+
	\begin{pmatrix}
		\sqrt{2}|A_k|_{\mathrm{qsym}}-|A_k^*| & 0\\
		0 & \sqrt{2}|A_k|_{\mathrm{qsym}}-|A_k|
	\end{pmatrix}\ge0, 
		\]where $\sqrt2|A_k|_{\mathrm{qsym}}\ge |A_k|$ and $\sqrt2|A_k|_{\mathrm{qsym}}\ge |A_k^*|$
		follow from $(|A_k|^2+|A_k^*|^2)/2 \ge |A_k|^2/2$ (and similarly for $|A_k^*|^2$)
		by operator monotonicity of $t\mapsto t^{\frac{1}{2}}$.
		Summing over $k$ gives
		\[
		\begin{pmatrix}
			\sqrt2\sum_{k=1}^m |A_k|_{\mathrm{qsym}} & \sum_{k=1}^m A_k\\
			(\sum_{k=1}^m A_k)^* & \sqrt2\sum_{k=1}^m |A_k|_{\mathrm{qsym}}
		\end{pmatrix}\ge 0.
		\]
		Applying Lemma~\ref{lem:eqdiag-dom} yields
		\[
		\Bigl\|\sum_{k=1}^m A_k\Bigr\|
	\le 
		\sqrt2\Bigl\|\sum_{k=1}^m |A_k|_{\mathrm{qsym}}\Bigr\|.
		\]
		
		\smallskip
		\noindent\emph{Sharpness.}
		Take $n=2$ and $A_1=\cdots=A_m=\begin{pmatrix}
			0&1\\
			0&0
		\end{pmatrix}$ with the operator norm.
		Since $\left| \begin{pmatrix}
			0&1\\
			0&0
		\end{pmatrix}\right| _{\mathrm{qsym}}=\frac{\sqrt{2}}{2}I_2$, we have
		$\|\sum_k A_k\|_\infty=m$ and $\|\sum_k |A_k|_{\mathrm{qsym}}\|_\infty=\frac{\sqrt{2}}{2}m$,
		so the ratio equals $\sqrt2$.
	\end{proof}
	
\begin{proof}[Proof of Theorem~\ref{thm:qsym-Lee}]
	For $A\in\mathbb M_n$, define the $2n\times 2n$ matrix
	\[
	\Phi(A):=\frac1{\sqrt2}\begin{pmatrix}A&0\\ A^*&0\end{pmatrix}.
	\]
	A direct computation shows
	\begin{equation}\label{eq:Phi-abs}
		|\Phi(A)|=\begin{pmatrix}|A|_{\mathrm{qsym}}&0\\0&0\end{pmatrix}.
	\end{equation}
	Clearly, $	\Phi$ is additive, that is,
\begin{equation}\label{eq:Phi-additive}
	\Phi(\sum_{k=1}^m A_k)=\sum_{k=1}^m \Phi(A_k).
\end{equation}
	
	Applying Theorem~\ref{thm:Lee-optimal} to the matrices $\Phi(A_1),\dots,\Phi(A_m)\in\mathbb M_{2n}$ gives
\begin{equation}\label{eq:Phi-Lee-inequality}
	\Bigl\|\sum_{k=1}^m \Phi(A_k)\Bigr\|
\le \sqrt{\min\{m,2n\}} \Bigl\|\sum_{k=1}^m |\Phi(A_k)|\Bigr\|.
\end{equation}
Thus, 
\begin{align*}
	\left\| \begin{pmatrix}|\sum_{k=1}^m A_k|_{\mathrm{qsym}}&0\\0&0\end{pmatrix}\right\| &=
		\bigl\||\Phi(\sum_{k=1}^m A_k)|\bigr\|&\text{(By \eqref{eq:Phi-abs})}\\
		&=	\bigl\|\Phi(\sum_{k=1}^m A_k)\bigr\|\\
		&=
		\bigl\|\sum_{k=1}^m \Phi(A_k)\bigr\|&\text{(By \eqref{eq:Phi-additive})}\\
		&\le \sqrt{\min\{m,2n\}}  \Bigl\|\sum_{k=1}^m |\Phi(A_k)|\Bigr\|&\text{(By \eqref{eq:Phi-Lee-inequality})}\\
	&=\sqrt{\min\{m,2n\}}  \Bigl\|\sum_{k=1}^m
	\begin{pmatrix}|A_k|_{\mathrm{qsym}}&0\\0&0\end{pmatrix}\Bigr\| &\text{(By \eqref{eq:Phi-abs})}\\
	&=\sqrt{\min\{m,2n\}}  \Bigl\|
	\begin{pmatrix}\sum_{k=1}^m |A_k|_{\mathrm{qsym}}&0\\0&0\end{pmatrix}\Bigr\|.
\end{align*}

	Finally, for any unitarily invariant norm, adjoining a zero direct summand does not change the norm
	(since it only appends zero singular values). Therefore,
	\[
\left\| \begin{pmatrix}|\sum_{k=1}^m A_k|_{\mathrm{qsym}}&0\\0&0\end{pmatrix}\right\|
	=\bigl\||\sum_{k=1}^m A_k|_{\mathrm{qsym}}\bigr\|,
	\qquad
	\Bigl\|
	\begin{pmatrix}\sum_{k=1}^m |A_k|_{\mathrm{qsym}}&0\\0&0\end{pmatrix}\Bigr\|
	=\Bigl\|\sum_{k=1}^m |A_k|_{\mathrm{qsym}}\Bigr\|.
	\]
	Combining these inequalities yields
	\[
	\Bigl\lVert \Bigl|\sum_{k=1}^m A_k\Bigr|_{\mathrm{qsym}} \Bigr\rVert
	\le \sqrt{\min\{m,2n\}} \,
	\Bigl\lVert \sum_{k=1}^m \bigl|A_k\bigr|_{\mathrm{qsym}} \Bigr\rVert.
	\]
\end{proof}
\section{Partial results on Problem~\ref{prob:general-lee-problem}}\label{sec:partial-results-general-lee}
First, we prove a sharp $\|\cdot\|_1\|\cdot\|_\infty$--$\|\cdot\|_2$ inequality.
\begin{lem}\label{lem:1inf-2-sharp}
	Let $X\in\Mn$ be positive semidefinite. Then
	\begin{equation}\label{eq:1inf-2-sharp}
		\|X\|_1\,\|X\|_\infty \ \le\ \frac{1+\sqrt{n}}{2}\,\|X\|_2^2.
	\end{equation}
	Moreover, the constant $\frac{1+\sqrt{n}}{2}$ is optimal.
\end{lem}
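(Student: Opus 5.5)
The inequality depends only on the eigenvalues $\lambda_1\ge\cdots\ge\lambda_n\ge0$ of $X$, and both sides are homogeneous of degree two. So I will normalize $\lambda_1=\|X\|_\infty=1$, write $x_j=\lambda_j\in[0,1]$ for $j\ge2$, and set
\[
s:=\sum_{j=2}^n x_j,\qquad q:=\sum_{j=2}^n x_j^2 .
\]
The claim then becomes
\[
1+s\le c\,(1+q),\qquad c:=\tfrac{1+\sqrt n}{2}.
\]

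By Cauchy--Schwarz on $n-1$ terms, $q\ge s^2/(n-1)$. So it suffices to show
\[
f(s):=c\Bigl(1+\frac{s^2}{n-1}\Bigr)-1-s\ \ge 0\qquad\text{for } 0\le s\le n-1 .
\]
This is a quadratic inequality in $s$, and I see two routes.

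\begin{itemize}
\item \emph{Discriminant.} Nonnegativity for all real $s$ holds iff $1\le 4\,\frac{c}{n-1}\,(c-1)$, i.e.\ $4c(c-1)\ge n-1$. With $c=\frac{1+\sqrt n}{2}$ one gets $4c(c-1)=(\sqrt n+1)(\sqrt n-1)=n-1$, so equality holds exactly. This is where the constant comes from.
\item \emph{AM--GM.} Alternatively, $s\le \frac{s^2}{t}+\frac t4$ for any $t>0$. Choosing $t$ to match the coefficient $c/(n-1)$ gives the same conclusion.
\end{itemize}

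For optimality, equality requires equality in Cauchy--Schwarz, so $x_2=\cdots=x_n=x$. It also requires $s$ to sit at the vertex, $s^*=\frac{n-1}{2c}=\frac{n-1}{1+\sqrt n}=\sqrt n-1$, which gives $x=\frac{\sqrt n-1}{n-1}=\frac{1}{\sqrt n+1}\in[0,1]$. So $X=\operatorname{diag}\bigl(1,\tfrac1{1+\sqrt n},\dots,\tfrac1{1+\sqrt n}\bigr)$ should give equality. Checking: $\|X\|_1=\sqrt n$ and $\|X\|_2^2=1+\frac{n-1}{(1+\sqrt n)^2}=\frac{2\sqrt n}{1+\sqrt n}$, so $\frac{\|X\|_1\|X\|_\infty}{\|X\|_2^2}=\frac{1+\sqrt n}{2}$, as required.

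The main obstacle is identifying the constant, and the discriminant computation resolves it cleanly. The one remaining check is that the extremal $x$ lies in $[0,1]$, so that the normalization $\lambda_1=\max_j\lambda_j$ is respected, and it does.
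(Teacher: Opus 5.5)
Your proof is correct and follows essentially the same route as the paper: normalize by homogeneity, use Cauchy--Schwarz to reduce to $\lambda_2=\cdots=\lambda_n$, then settle a one-variable problem, arriving at the same extremal matrix $\diag\bigl(1,\tfrac{1}{1+\sqrt n},\dots,\tfrac{1}{1+\sqrt n}\bigr)$. The only difference is the last step. You certify the scalar bound through the discriminant identity $4c(c-1)=n-1$, which gives nonnegativity for all real $s$ and so needs no endpoint check. The paper instead maximizes $R(r)=\frac{1+(n-1)r}{1+(n-1)r^2}$ over $[0,1]$ by differentiation.
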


\begin{proof}
	Let $\lambda_1\ge\cdots\ge\lambda_n\ge 0$ be the eigenvalues of $X$. Then
	$\|X\|_1=\sum_{i=1}^n \lambda_i$, $\|X\|_\infty=\lambda_1$, and
	$\|X\|_2^2=\sum_{i=1}^n \lambda_i^2$. Hence \eqref{eq:1inf-2-sharp} is equivalent to
	\begin{equation}\label{eq:eig-ineq}
		\lambda_1\Bigl(\sum_{i=1}^n \lambda_i\Bigr)
		\ \le\ \frac{1+\sqrt{n}}{2}\sum_{i=1}^n \lambda_i^2.
	\end{equation}
	
	Fix $\lambda_1=a\ge 0$ and set $s=\sum_{i=2}^n \lambda_i$. By Cauchy--Schwarz inequality,
	$\sum_{i=2}^n \lambda_i^2 \ge s^2/(n-1)$, so for fixed $(a,s)$ the ratio
	\[
	\frac{a(a+s)}{a^2+\sum_{i=2}^n \lambda_i^2}
	\]
	is maximized when $\lambda_2=\cdots=\lambda_n=b:=s/(n-1)$. Thus it suffices to maximize,
	over $0\le b\le a$,
	\[
	R(a,b):=\frac{a(a+(n-1)b)}{a^2+(n-1)b^2}.
	\]
	By homogeneity we may set $a=1$ and write $r=b/a\in[0,1]$, obtaining
	\[
	R(r)=\frac{1+(n-1)r}{1+(n-1)r^2}.
	\]
	A direct derivative computation gives
	\[
	R'(r)=\frac{(n-1)\bigl(1-2r-(n-1)r^2\bigr)}{\bigl(1+(n-1)r^2\bigr)^2},
	\]
	so the unique critical point in $(0,1)$ is the positive root of
	$(n-1)r^2+2r-1=0$, namely $r=\frac{1}{\sqrt{n}+1}$. Substituting this value yields
	\[
	\max_{r\in[0,1]} R(r)
	= \frac{1+\sqrt{n}}{2}.
	\]
	This proves \eqref{eq:eig-ineq}, hence \eqref{eq:1inf-2-sharp}. Optimality follows
	because equality is attained when $\lambda_1=a$ and
	$\lambda_2=\cdots=\lambda_n=\frac{a}{\sqrt{n}+1}$.
\end{proof}
Next, we prove a sharp inequality on the Frobenius norm for  Problem~\ref{prob:general-lee-problem}.
\begin{thm}\label{thm:c2-minmn}
	Let $m,n\ge 1$ and $A_1,\dots,A_m\in\Mn$. Set $d:=\min\{m,n\}$, then
	\[
	\Bigl\|\sum_{k=1}^m A_k\Bigr\|_2
	\ \le\
	\sqrt{\frac{1+\sqrt{d}}{2}}\;
	\Bigl\|\sum_{k=1}^m |A_k|\Bigr\|_2.
	\]
Moreover, the constant $\sqrt{\frac{1+\sqrt{d}}{2}}$ is sharp.
\end{thm}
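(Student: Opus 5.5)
Set $S:=\sum_k A_k$, $X:=\sum_k |A_k|$, $Y:=\sum_k|A_k^*|$. As in the proof of Theorem~\ref{thm:lee-optimal-n-less-m}, summing the blocks of Lemma~\ref{lem:pos-block-MA} gives $\begin{pmatrix}Y&S\\ S^*&X\end{pmatrix}\ge0$. By Lemma~\ref{lem:contraction} we then have $S=Y^{1/2}KX^{1/2}$ with $\|K\|_\infty\le1$. The plan is to bound $\|S\|_2^2$ by a mixed expression in $\|X\|_1$ and $\|X\|_\infty$, and then apply Lemma~\ref{lem:1inf-2-sharp}.

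First, since $\|K\|_\infty\le 1$ gives $K^*K\le I$, we get
\[
\|S\|_2^2=\tr(X^{1/2}K^*YKX^{1/2})\le \|Y\|_\infty\,\tr(X^{1/2}K^*KX^{1/2})\le \|Y\|_\infty \tr X .
\]
This loses the symmetry between $X$ and $Y$. To restore it, I will instead write $\|S\|_2^2\le \min\{\|Y\|_\infty\tr X,\ \|X\|_\infty\tr Y\}$, which uses the symmetric bound $\|S\|_2^2=\tr(Y^{1/2}KXK^*Y^{1/2})\le\|X\|_\infty\tr Y$. Next, $\tr Y=\tr X=\|X\|_1$, so $\|S\|_2^2\le \|X\|_\infty\|X\|_1$. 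Lemma~\ref{lem:1inf-2-sharp} then gives $\|S\|_2^2\le\frac{1+\sqrt n}{2}\|X\|_2^2$, which is the bound with $d=n$.

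For $m<n$ we need $d=m$, so the dimension $n$ must be replaced by a rank-type count, and I expect this to be the main obstacle. My first attempt would be to use the positive block matrix $\begin{pmatrix}Y&S\\S^*&X\end{pmatrix}$ itself, rather than $X$ and $Y$ separately, to obtain a better estimate: Lemma~\ref{lem:1inf-2-sharp} applied to $X$ only sees $n$, not $m$. Alternatively, I would factor $S=\sum_k U_k|A_k|^{1/2}|A_k|^{1/2}$ as $S=RC$, where $R=[U_1|A_1|^{1/2},\dots,U_m|A_m|^{1/2}]$ and $C$ is the column of the $|A_k|^{1/2}$. Then $RR^*=Y$ and $C^*C=X$, while the Gram matrices $R^*R$ and $CC^*$ are $m\times m$ block matrices. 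Passing to $CC^*$ (unitarily equivalent in nonzero spectrum to $X$) does not reduce the size to $m$, so the honest route is different. I would prove the scalar inequality $\|X\|_\infty\|X\|_1\le\frac{1+\sqrt m}{2}\|X\|_2^2$ is not what is needed. Instead I would sharpen the first step to $\|S\|_2^2\le \tr(X\,K^*YK)$ and bound the spectral data of $K^*YK$ using that each summand contributes rank-one structure in an $m$-dimensional direction. Concretely, I would try to show that $\|S\|_2^2\le \sum_{k}\tr(|A_k|)\,\lambda_1$-type terms, with at most $m$ effective eigenvalues, and then rerun the optimization of Lemma~\ref{lem:1inf-2-sharp} with $n$ replaced by $m$. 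Since the Tang--Zhang bound $c_2(m)=\sqrt{(1+\sqrt m)/2}$ is cited as established dimension-free, I would in practice invoke their argument for the $m$-part, and take the minimum of the two bounds.

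\textbf{Sharpness.} Take $d$ matrices built from $E_{1k}$-type blocks scaled so that the eigenvalues of $X$ are $(a,b,\dots,b)$ with $b=a/(\sqrt d+1)$, as in the equality case of Lemma~\ref{lem:1inf-2-sharp}. I would try $A_1=\operatorname{diag}(a-b,0,\dots)$ plus $A_k=\sqrt{b}\,e_1$-row pieces. More precisely, I would choose $A_k=e_1w_k^*$ with the $w_k$ orthogonal and $\|w_k\|^2$ tuned, together with a normal part, and I would verify that equality holds in every step, in particular that $\|S\|_2^2=\|X\|_\infty\|X\|_1$.
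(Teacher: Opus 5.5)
Your upper bound is correct and is essentially the paper's argument. Your estimate $\|S\|_2^2=\tr(Y^{1/2}KXK^*Y^{1/2})\le\|X\|_\infty\tr Y=\|X\|_\infty\|X\|_1$ is the same bound the paper gets from the factorization $S=WZ^*$ with $WW^*=Y$, $ZZ^*=X$, and Lemma~\ref{lem:1inf-2-sharp} then gives the $n$-bound. For the $m$-bound the paper does exactly what you fall back on: it quotes Tang--Zhang. You are right that the route through $\|X\|_1\|X\|_\infty$ cannot give the $m$-bound, since for $m=1$ the matrix $X=|A_1|$ is an arbitrary positive matrix.

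The genuine gap is in the sharpness part: the construction you propose cannot reach the constant. Write
\[
\|S\|_2^2=\sum_k\|A_k\|_2^2+2\sum_{i<j}\Re\tr(A_i^*A_j)\quad\text{and}\quad\|X\|_2^2=\sum_k\|A_k\|_2^2+2\sum_{i<j}\tr(|A_i||A_j|).
\]
So a ratio above $1$ can only come from pairs with $\Re\tr(A_i^*A_j)>\tr(|A_i||A_j|)$. For $A_k=e_1w_k^*$ with mutually orthogonal $w_k$, both cross terms vanish. More generally, for any family built from diagonal matrices and multiples of the $E_{1k}$ (such as your $\diag(a-b,0,\dots)$ plus $E_{1k}$-type pieces), one checks $\Re\tr(A_i^*A_j)\le\tr(|A_i||A_j|)$ for every pair, hence $\|S\|_2\le\|X\|_2$. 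Concretely, if you tune $\|w_1\|=a$ and $\|w_k\|=b$ so that $X$ has spectrum $(a,b,\dots,b)$, then $\|S\|_2^2=a^2+(d-1)b^2<a(a+(d-1)b)=\|X\|_\infty\|X\|_1$. So the equality you plan to verify is false. The $E_{1k}$ family is extremal for the operator norm (Example~\ref{eg:Lee-uin-sharp}) but gives ratio exactly $1$ in the Frobenius norm.

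The missing idea is that the moduli must overlap. For $A_k=uv_k^*$ with unit vectors $v_k$, one has $\tr(A_i^*A_j)=v_j^*v_i$ but $\tr(|A_i||A_j|)=|v_j^*v_i|^2$. A positive inner product $t$ therefore gains linearly in $\|S\|_2^2$ while costing only quadratically in $\|X\|_2^2$. The paper's construction runs as follows.
\begin{itemize}
\item Take $t=1/(\sqrt d+1)$ and realize $d$ equiangular unit vectors ($v_i^*v_j=t$ for $i\ne j$) as the columns of $V$ with $V^*V=G:=(1-t)I_d+tJ_d\ge0$.
\item Set $A_k=uv_k^*$ for $k\le d$ and $A_k=0$ for $k>d$.
\item Then $X=VV^*$ has the nonzero spectrum of $G$: $\sqrt d$ once and $1-t=\sqrt d/(\sqrt d+1)$ with multiplicity $d-1$. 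This is exactly the equality profile $b/a=1/(\sqrt d+1)$ of Lemma~\ref{lem:1inf-2-sharp} in dimension $d$.
\item The vector $\sum_k v_k$ is a top eigenvector of $X$, and $\|S\|_2^2=\|\sum_k v_k\|^2=d\sqrt d=\|X\|_1\|X\|_\infty$.
\item Finally $\|X\|_2^2=d(1+(d-1)t^2)$, so $\|S\|_2^2/\|X\|_2^2=\frac{1+(d-1)t}{1+(d-1)t^2}=\frac{1+\sqrt d}{2}$.
\end{itemize}
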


\begin{proof}
	Write $S:=\sum_{k=1}^m A_k$ and $X:=\sum_{k=1}^m |A_k|$.
	
	\smallskip
	\noindent\emph{Step 1: a new upper bound.}
	Take polar decompositions $A_k=U_k|A_k|$. Define the block matrices
\begin{align*}
		Z:&=\bigl[\,|A_1|^{\frac{1}{2}}\ \ |A_2|^{\frac{1}{2}}\ \ \cdots\ \ |A_m|^{\frac{1}{2}}\,\bigr]\in\mathbb{M}_{n,mn},\\
	W:&=\bigl[\,U_1|A_1|^{\frac{1}{2}}\ \ \cdots\ \ U_m|A_m|^{\frac{1}{2}}\,\bigr]\in\mathbb{M}_{n,mn}.
\end{align*}
Then $S=WZ^*$ and $ZZ^*=\sum_{k=1}^m |A_k|=X$. Using $\|AB\|_2\le \|A\|_2\|B\|_\infty$ for any $A,B\in \Mn$,
	we get
	\[
	\|S\|_2^2=\|WZ^*\|_2^2\ \le\ \|W\|_2^2\,\|Z\|_\infty^2.
	\]
	Notice that $\|W\|_2^2=\tr(WW^*)=\sum_{k=1}^m \tr(|A_k|)=\tr(X)=\|X\|_1$,
	and $\|Z\|_\infty^2=\|ZZ^*\|_\infty=\|X\|_\infty$. Hence
	\[
	\|S\|_2^2\ \le\ \|X\|_1\,\|X\|_\infty.
	\]
	Applying Lemma~\ref{lem:1inf-2-sharp} gives
	\[
	\|S\|_2^2 \ \le\ \frac{1+\sqrt{n}}{2}\,\|X\|_2^2,
	\qquad\text{i.e.}\qquad
	\|S\|_2 \ \le\ \sqrt{\frac{1+\sqrt{n}}{2}}\;\|X\|_2.
	\]
	Thus $c_2^{\mathrm{abs}}(m,n)\le \sqrt{\frac{1+\sqrt{n}}{2}}$ for all $m$.
	
	\smallskip
	\noindent\emph{Step 2: Tang-Zhang's upper bound.}
	Tang and Zhang \cite{TZ26} proved that for all $n$,
	\[
	c_2^{\mathrm{abs}}(m,n)\le \sqrt{\frac{1+\sqrt{m}}{2}}.
	\]
In particular, for $n\ge m$,
	$c_2^{\mathrm{abs}}(m,n)= \sqrt{\frac{1+\sqrt{m}}{2}}$.
	
	\smallskip
	Combining Steps 1 and 2 yields the uniform upper bound
	\[
	c_2^{\mathrm{abs}}(m,n)\ \le\ \sqrt{\frac{1+\sqrt{\min\{m,n\}}}{2}}.
	\]
	
	\smallskip
	\noindent\emph{Step 3: Sharpness (lower bound).}
	Let $d=\min\{m,n\}$ and set $t:=\frac{1}{\sqrt{d}+1}$. Consider the $d\times d$ matrix
	$G:=(1-t)I_d+tJ_d$, where $J_d$ is the all-ones matrix. Since
	$\sigma(G)=\{\,1-t\ \text{(mult.\ $d-1$)},\ 1+(d-1)t\ \text{(mult.\ $1$)}\,\}\subset(0,\infty)$,
	we have $G\ge 0$. Hence there exists a matrix $V\in\mathbb{M}_{d}$ with $V^*V=G$.
	Let $v_1,\dots,v_d\in\mathbb{C}^d$ be the columns of $V$. Then $\|v_k\|=1$ and
	$\langle v_i,v_j\rangle = t$ for $i\neq j$.
	
	Embed $\mathbb{C}^d$ into $\mathbb{C}^n$ (if $d<n$) by padding zeros, and fix a unit vector
	$u\in\mathbb{C}^n$ (e.g.\ $u=e_1$). Define, for $1\le k\le d$,
	\[
	A_k:=u v_k^*\in\Mn,
	\qquad\text{ and set }	\qquad
	A_{d+1}=\cdots=A_m:=0.
	\]
	Then $|A_k|=(A_k^*A_k)^{\frac{1}{2}}=(v_kv_k^*)^{\frac{1}{2}}=v_kv_k^*$, so
	$X=\sum_{k=1}^m |A_k|=\sum_{k=1}^d v_kv_k^*$. Moreover,
	$S=\sum_{k=1}^m A_k=u\bigl(\sum_{k=1}^d v_k\bigr)^*$, so
	\[
	\|S\|_2^2=\Bigl\|\sum_{k=1}^d v_k\Bigr\|^2
	= \sum_{k=1}^d \|v_k\|^2 + \sum_{i\neq j}\langle v_i,v_j\rangle
	= d + d(d-1)t = d\bigl(1+(d-1)t\bigr).
	\]
	Also,
	\[
	\|X\|_2^2=\tr(X^2)
	= \sum_{k=1}^d \tr\bigl((v_kv_k^*)^2\bigr)
	+ \sum_{i\neq j}\tr(v_iv_i^*v_jv_j^*)
	= d + \sum_{i\neq j}|\langle v_i,v_j\rangle|^2
	= d + d(d-1)t^2 = d\bigl(1+(d-1)t^2\bigr).
	\]
	Therefore
	\[
	\frac{\|S\|_2^2}{\|X\|_2^2}
	= \frac{1+(d-1)t}{1+(d-1)t^2}.
	\]
	With $t=\frac{1}{\sqrt{d}+1}$ one checks that
	$1+(d-1)t=\sqrt{d}$ and $1+(d-1)t^2=\frac{2\sqrt{d}}{\sqrt{d}+1}$, hence
	\[
	\frac{\|S\|_2^2}{\|X\|_2^2}=\frac{1+\sqrt{d}}{2},
	\qquad\text{so}\qquad
	\frac{\|S\|_2}{\|X\|_2}=\sqrt{\frac{1+\sqrt{d}}{2}}.
	\]
	This shows $c_2^{\mathrm{abs}}(m,n)\ge \sqrt{\frac{1+\sqrt{d}}{2}}$, matching the upper bound.
\end{proof}
Next, we give a dimension-dependent upper bound for general $p$ in Problem~\ref{prob:general-lee-problem}.
\begin{prop}\label{prop:cp-min}
	Let $1\le p\le \infty$ and $A_1,\dots,A_m\in\Mn$. Put $d:=\min\{m,n\}$. Then
	\[
	\Bigl\|\sum_{k=1}^m A_k\Bigr\|_p
	\ \le\
	d^{\frac12-\frac{1}{2p}}\;
	\Bigl\|\sum_{k=1}^m |A_k|\Bigr\|_p.
	\]
\end{prop}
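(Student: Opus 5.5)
We want $\|S\|_p\le d^{\frac12-\frac1{2p}}\|X\|_p$ with $S:=\sum_k A_k$, $X:=\sum_k|A_k|$ and $d=\min\{m,n\}$. The plan is to interpolate between $p=1$ and $p=\infty$. Both endpoints are already available: at $p=1$ the constant is $1$, and at $p=\infty$ it is $\sqrt d$ by Theorem~\ref{thm:Lee-optimal}. Rather than invoke abstract interpolation, which is awkward because the map $(A_k)\mapsto S$ is not linear in $X$, I will work directly with the factorization from the proof of Theorem~\ref{thm:c2-minmn}, namely $S=WZ^*$ with $ZZ^*=X$ and $WW^*=Y:=\sum_k|A_k^*|$. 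Equivalently, I will use the block-matrix factorization $S=Y^{1/2}KX^{1/2}$ with $\|K\|_\infty\le1$ from the proof of Theorem~\ref{thm:lee-optimal-n-less-m}.

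\emph{Step 1: a Hölder bound.} Lemma~\ref{lem:Holder-inequality} with exponents $2p,\infty,2p$ gives
\[
\|S\|_p\le \|Y^{1/2}\|_{2p}\,\|X^{1/2}\|_{2p}=\|Y\|_p^{1/2}\|X\|_p^{1/2}.
\]
It therefore suffices to show $\|Y\|_p\le d^{1-\frac1p}\|X\|_p$.

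\emph{Step 2: comparing $Y$ with $X$.} By the triangle inequality, $\|Y\|_p\le\sum_k\||A_k^*|\|_p=\sum_k\||A_k|\|_p$. Two estimates then give the claim.
\begin{itemize}[leftmargin=*]
\item \emph{Case $d=m$.} Each $|A_k|\le X$, so $\||A_k|\|_p\le\|X\|_p$. This yields $\|Y\|_p\le m\|X\|_p$, which is weaker than needed. A better route is convexity: $\|Y\|_p^p=\tr Y^p$, and by the Schatten triangle inequality combined with the power mean,
\[
\Big(\sum_k\||A_k|\|_p\Big)^p\le m^{p-1}\sum_k\tr|A_k|^p.
\]
For $p\ge1$, superadditivity of $t\mapsto t^p$ on positive operators gives $\sum_k\tr|A_k|^p\le\tr X^p$ (McCarthy). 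Hence $\|Y\|_p\le m^{1-1/p}\|X\|_p$.
\item \emph{Case $d=n$.} Use $\|Y\|_p\le\|Y\|_1=\tr Y=\tr X\le n^{1-1/p}\|X\|_p$, where the last step is Hölder for eigenvalues.
\end{itemize}
Together these give $\|Y\|_p\le d^{1-1/p}\|X\|_p$.

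\emph{Step 3: conclusion.} Inserting Step 2 into Step 1 gives $\|S\|_p\le d^{\frac12-\frac1{2p}}\|X\|_p$. The case $p=\infty$ is consistent with Theorem~\ref{thm:Lee-optimal}.

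The main obstacle is Step 2. The naive pointwise bound loses a full factor of $m$, so the argument hinges on the McCarthy-type superadditivity $\sum_k\tr|A_k|^p\le\tr(\sum_k|A_k|)^p$ for positive summands and $p\ge1$. I would verify this either by iterating the two-term case $\tr A^p+\tr B^p\le\tr(A+B)^p$, or via Lemma~\ref{lem:Bourin-Uchiyama}'s convex counterpart.
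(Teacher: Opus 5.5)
Your proof is correct, and it partly differs from the paper's. You share the paper's first step: the factorization $S=Y^{1/2}KX^{1/2}$ plus H\"older gives $\|S\|_p^2\le\|X\|_p\|Y\|_p$. Your dimension estimate $\|Y\|_p\le\tr Y=\tr X\le n^{1-\frac1p}\|X\|_p$ is also exactly the paper's.

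The difference is the count-dependent half. The paper does not prove it; it imports Tang and Zhang's bound $c_p^{\mathrm{abs}}(m,n)\le m^{\frac12-\frac1{2p}}$. You derive it from the same inequality by bounding $\|Y\|_p$ a second way:
\[
\|Y\|_p\le\sum_k\||A_k|\|_p\le m^{1-\frac1p}\Bigl(\sum_k\tr|A_k|^p\Bigr)^{1/p}\le m^{1-\frac1p}\|X\|_p .
\]
The last step is McCarthy's inequality (Lemma~\ref{lem:McCarthy}) iterated over the positive summands, and the whole chain is valid.

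What your route buys:
\begin{itemize}
\item The proposition becomes self-contained, using only tools already present in the paper.
\item Both constants visibly come from one estimate with two unconditional bounds on $\|Y\|_p/\|X\|_p$. Your ``cases'' $d=m$ and $d=n$ are really just taking the minimum of these two bounds.
\item Your bound $\|Y\|_p\le m^{1-\frac1p}\|X\|_p$ is attained by $A_k=E_{1k}$.
\item At $p=\infty$ your argument directly recovers Lee's factor $\sqrt m$ for the operator norm.
\end{itemize}
The paper's version is shorter on the page only because it outsources this half to the literature.
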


\begin{proof}
	Fix $1\le p<\infty$ and set
	\[
	S=\sum_{k=1}^m A_k,\qquad
	X=\sum_{k=1}^m |A_k|,\qquad
	Y=\sum_{k=1}^m |A_k^*|.
	\]
	By Lemma~\ref{lem:pos-block-MA}, for each $k$,
	\[
	\begin{pmatrix}
		|A_k^*| & A_k\\
		A_k^* & |A_k|
	\end{pmatrix}\ge 0.
	\]
	Summing over $k$ gives
	\[
	\begin{pmatrix}
		Y & S\\
		S^* & X
	\end{pmatrix}\ge 0.
	\]
	By Lemma~\ref{lem:contraction}, there exists a contraction $K$ such that
	\[
	S=Y^{\frac{1}{2}}KX^{\frac{1}{2}}.
	\]
	Using H\"{o}lder's inequality for Schatten norms (Lemma~\ref{lem:Holder-inequality}),
	\[
	\|S\|_p
	=\|Y^{\frac{1}{2}}KX^{\frac{1}{2}}\|_p
	\le \|Y^{\frac{1}{2}}\|_{2p}\,\|K\|_\infty\,\|X^{\frac{1}{2}}\|_{2p}
	\le \|Y^{\frac{1}{2}}\|_{2p}\,\|X^{\frac{1}{2}}\|_{2p}.
	\]
	Since $X,Y\ge 0$, we have $\|Y^{\frac{1}{2}}\|_{2p}^2=\|Y\|_p$ and
	$\|X^{\frac{1}{2}}\|_{2p}^2=\|X\|_p$, hence
	\[
	\|S\|_p^2\le \|X\|_p\,\|Y\|_p.
	\]
	Next, $\|Y\|_p\le \|Y\|_1=\tr(Y)=\tr(X)=\|X\|_1$ and
	$\|X\|_1\le n^{1-\frac1p}\|X\|_p$. Therefore
	\[
	\|S\|_p^2\le n^{1-\frac1p}\|X\|_p^2,
	\qquad\text{i.e.}\qquad
	\|S\|_p\le n^{\frac12-\frac{1}{2p}}\|X\|_p.
	\]
	Combining this with Tang-Zhang's bound $c_p^{\mathrm{abs}}(m,n)\le m^{\frac12-\frac{1}{2p}}$
	from \cite{TZ26} yields
	\[
	c_p^{\mathrm{abs}}(m,n)\le (\min\{m,n\})^{\frac12-\frac{1}{2p}}.
	\]
	The case $p=\infty$ follows by letting $p\to\infty$, or
	directly from the operator-norm case.
\end{proof}
	\section{Proofs of Theorems~\ref{thm:schatten-sum-vs-sym-p} and \ref{thm:schatten-sum-vs-qsym-p}}\label{sec:proofs-schatten-p}
The classic  McCarthy's inequality \cite{McC67} states that
\begin{lem}\label{lem:McCarthy}
	Let $A,B\in \Mn$ be positive semidefinite. Then for all $p\ge 1$,
	\[
	\tr (A^p + B^p) \le \tr (A+B)^p.
	\]
\end{lem}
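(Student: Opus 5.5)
Since $A,B\ge 0$ and $p\ge 1$, the inequality $\tr(A^p+B^p)\le \tr(A+B)^p$ is a trace superadditivity statement for the convex function $t\mapsto t^p$ with $f(0)=0$. I will deduce it from a monotonicity argument based on compressions. Set $C:=A+B\ge 0$. If $C=0$, there is nothing to prove. Otherwise, both $A$ and $B$ vanish on $\ker C$, so we may restrict to $\operatorname{ran} C$ and assume $C$ is invertible. Then
\[
X:=C^{-\frac12}AC^{-\frac12},\qquad Y:=C^{-\frac12}BC^{-\frac12}
\]
satisfy $X,Y\ge 0$ and $X+Y=I$, so $0\le X\le I$. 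Put $K:=A^{\frac12}C^{-\frac12}$. Then $K^*K=X\le I$, so $K$ is a contraction, and
\[
A=KCK^*.
\]
Likewise $B=LCL^*$ with $L:=B^{\frac12}C^{-\frac12}$ and $L^*L=Y$.

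The key step is the Hansen-type estimate
\[
\tr (KCK^*)^p\le \tr\bigl(K C^p K^*\bigr)\qquad(p\ge 1,\ \|K\|_\infty\le 1).
\]
I will prove it eigenvalue-wise. Write $C=\sum_i \mu_i e_ie_i^*$, and let $(w_j)$ be an orthonormal eigenbasis of $KCK^*$ with eigenvalues $\nu_j$, so that
\[
\nu_j=\sum_i \mu_i\,|\langle K^*w_j,e_i\rangle|^2 .
\]
The weights $c_{ji}:=|\langle K^*w_j,e_i\rangle|^2$ satisfy $\sum_i c_{ji}=\|K^*w_j\|^2\le 1$. Since $t\mapsto t^p$ is convex and vanishes at $0$, Jensen's inequality (with the missing mass $1-\sum_i c_{ji}$ placed at $0$) gives
\[
\nu_j^p\le \sum_i c_{ji}\mu_i^p .
\]
Summing over $j$ yields
\[
\tr(KCK^*)^p\le \sum_i \mu_i^p\sum_j c_{ji}=\tr(KC^pK^*).
\]

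Applying this to $A=KCK^*$ and $B=LCL^*$ and using cyclicity of the trace,
\[
\tr A^p+\tr B^p\le \tr(C^pK^*K)+\tr(C^pL^*L)=\tr\bigl(C^p(X+Y)\bigr)=\tr C^p .
\]
The one point requiring care is the identity $K^*K+L^*L=X+Y=I$. It is exactly what makes the two Jensen bounds recombine to $\tr C^p$, and it relies on the reduction to invertible $C$ made at the start; in the singular case the argument runs on $\operatorname{ran}C$.

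Alternatively, for $p\ge 1$ one can use $A\le C$ to get $A^{p-1}\le \ldots$, but this fails for $p>2$ because $t^{p-1}$ is then not operator monotone. The contraction/Jensen argument above avoids this issue for all $p\ge 1$.
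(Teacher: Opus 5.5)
Your proof is correct. It necessarily takes a different route from the paper, which gives no argument at all: the lemma is simply quoted as McCarthy's classical inequality \cite{McC67}.

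The details check out:
\begin{itemize}
\item $A,B\le C$ forces $\ker C\subseteq\ker A\cap\ker B$, so the reduction to invertible $C$ is legitimate.
\item $A=KCK^*$ and $B=LCL^*$ with $K^*K+L^*L=X+Y=I$.
\item In the Jensen step, $\sum_j c_{ji}=\|Ke_i\|^2$, so $\sum_i\mu_i^p\sum_j c_{ji}=\tr(C^pK^*K)$, as you claim.
\end{itemize}

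With your choice $K=A^{\frac12}C^{-\frac12}$, the key estimate $\tr(KCK^*)^p\le\tr(KC^pK^*)$ reads exactly $\tr A^p\le\tr(AC^{p-1})$. So your argument is in effect the route you dismiss in your last paragraph. The operator inequality $A^{p-1}\le C^{p-1}$, which holds only for $p\le 2$, is replaced by a trace inequality valid for all $p\ge 1$, which you prove by scalar Jensen on compressions.

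What your approach buys is self-containedness and generality. Verbatim, it proves $\tr f(A)+\tr f(B)\le\tr f(A+B)$ for every convex $f$ on $[0,\infty)$ with $f(0)\le 0$. This is the trace form of the convex counterpart of Lemma~\ref{lem:Bourin-Uchiyama}, which the paper also uses. The citation, of course, buys brevity.

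An equally short standard alternative avoids the invertibility reduction. Let $M=[A^{\frac12}\ B^{\frac12}]\in\mathbb{M}_{n,2n}$. Then $M^*M$ has diagonal blocks $A$ and $B$, and the same nonzero eigenvalues as $MM^*=A+B$. The pinching inequality $\tr f(A\oplus B)\le\tr f(M^*M)$ for convex $f$ then yields the same conclusion.
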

	\begin{proof}[Proof of Theorem~\ref{thm:schatten-sum-vs-sym-p}]
		Let $1\le p\le\infty$ and $A_1,\dots,A_m\in\mathbb M_n$. Set
		\[
		S:=\sum_{k=1}^m A_k,\qquad
		X:=\sum_{k=1}^m |A_k|,\qquad
		Y:=\sum_{k=1}^m |A_k^*|.
		\]
	Hence,
	$$\sum_{k=1}^m |A_k|_{\mathrm{sym}}=\frac{X+Y}{2}.$$

		For each $k$, define the Hermitian dilation
		\[
		H_k:=\begin{pmatrix}0&A_k\\ A_k^*&0\end{pmatrix}\in\mathbb M_{2n},
		\qquad
		H:=\sum_{k=1}^m H_k=\begin{pmatrix}0&S\\ S^*&0\end{pmatrix}.
		\]
		A direct computation yields
		\[
		|H_k|=\begin{pmatrix}|A_k^*|&0\\0&|A_k|\end{pmatrix},
		\qquad
		D:=\sum_{k=1}^m |H_k|
		=\begin{pmatrix}Y&0\\0&X\end{pmatrix}.
		\]
		
	For every Hermitian matrix $H_k$, by Lemma~\ref{lem:pos-block-MA}, we know
		\begin{equation*}
			\begin{pmatrix}|H_k|&H_k\\ H_k&|H_k|\end{pmatrix}\ge 0.
		\end{equation*}
	Summing over $k$ gives
		\[
		\begin{pmatrix}D&H\\ H&D\end{pmatrix}
		=\sum_{k=1}^m \begin{pmatrix}|H_k|&H_k\\ H_k&|H_k|\end{pmatrix}\ge 0.
		\]
	Thus by Lemma~\ref{lem:contraction}, $H=D^\frac{1}{2}KD^\frac{1}{2}$ for some contraction $K$.
		By using H\"{o}lder inequality (Lemma~\ref{lem:Holder-inequality}),
\begin{equation}\label{eq:H-D-p-Holder}
	\|H\|_p
=\|D^\frac{1}{2}KD^\frac{1}{2}\|_p
\le \|D^\frac{1}{2}\|_{2p}\,\|K\|_\infty\,\|D^\frac{1}{2}\|_{2p}
\le \|D^\frac{1}{2}\|_{2p}^2=\|D\|_{p}.
\end{equation}	
A direct computation shows that
$
|H|=\begin{pmatrix}|S^*|&0\\0&|S|\end{pmatrix},
$
hence 
\begin{equation}\label{eq:Schatten-p-H}
	\|H\|_p=2^{\frac{1}{p}}\|S\|_p=2^{\frac{1}{p}}\|\sum_{k=1}^m A_k\|_p.
\end{equation}

By McCarthy's inequality (Lemma~\ref{lem:McCarthy}),  we have
	\begin{align}\label{eq:Schatten-p-D}
		\|D\|_p&=(\tr X^p+\tr Y^p)^\frac{1}{p}\nonumber\\
		&\le \left( \tr (X+Y)^p\right)^\frac{1}{p}\nonumber\\
		&=\|X+Y\|_p\nonumber\\
		&=\left\| \sum_{k=1}^m |A_k|+\sum_{k=1}^m |A_k^*|\right\|_p\nonumber\\
		&=2\left\| \sum_{k=1}^m |A_k|_{\mathrm{sym}}\right\|_p.
	\end{align}
Combining \eqref{eq:H-D-p-Holder}, \eqref{eq:Schatten-p-H} and \eqref{eq:Schatten-p-D} gives
		\[
		\Bigl\|\sum_{k=1}^m A_k\Bigr\|_p
		\le 2^{1-\frac{1}{p}}\Bigl\|\sum_{k=1}^m |A_k|_{\mathrm{sym}}\Bigr\|_p.
		\]
		This proves the claim.
		
\medskip
\noindent\emph{Sharpness.}
Let $n=2$ and let $E_{12}\in\mathbb{M}_2$ be the matrix unit.
For each $m\ge1$, set $A_1=\cdots=A_m:=E_{12}$.
Then $\bigl\|\sum_{k=1}^m A_k\bigr\|_p=\|mE_{12}\|_p=m$ for every $1\le p\le\infty$.

Moreover, $|E_{12}|=\diag(0,1)$ and $|E_{12}^*|=\diag(1,0)$, hence
$|E_{12}|_{\mathrm{sym}}=(|E_{12}|+|E_{12}^*|)/2=I_2/2$.
Therefore
\[
\Bigl\|\sum_{k=1}^m |A_k|_{\mathrm{sym}}\Bigr\|_p
=\Bigl\|\frac{m}{2}I_2\Bigr\|_p
=
\begin{cases}
	m\,2^{\frac1p-1}, & 1\le p<\infty,\\[2pt]
	\frac{m}{2}, & p=\infty.
\end{cases}
\]
Consequently,
\[
\frac{\bigl\|\sum_{k=1}^m A_k\bigr\|_p}{\bigl\|\sum_{k=1}^m |A_k|_{\mathrm{sym}}\bigr\|_p}
=2^{1-\frac1p},
\]
so equality holds in Theorem~\ref{thm:schatten-sum-vs-sym-p} and the constant $2^{1-\frac1p}$ is sharp.
	\end{proof}
	\begin{lem}\label{lem:trace-cs}
		For all $A,B\in\Mn$,
		\begin{equation}\label{eq:trace-cs}
			|\tr(A^*B)|^2\ \le\ \tr(\absv{A}\,\absv{B})\,\tr(\absv{A^*}\,\absv{B^*}).
		\end{equation}
		Consequently,
		\begin{equation}\label{eq:trace-amgm}
			|\tr(A^*B)|\ \le\ \frac{\tr(\absv{A}\,\absv{B})+\tr(\absv{A^*}\,\absv{B^*})}{2}.
		\end{equation}
	\end{lem}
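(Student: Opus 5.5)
Write polar decompositions $A=U|A|$ and $B=V|B|$, so that $|A^*|=U|A|U^*$ and $|B^*|=V|B|V^*$. The plan is to split $A^*B$ into two factors, one carrying the "right" moduli $|A|,|B|$ and the other the "left" moduli $|A^*|,|B^*|$, and then apply the Cauchy--Schwarz inequality for the Hilbert--Schmidt inner product.

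Using $A^*=|A|U^*=U^*|A^*|$ and $B=V|B|=|B^*|V$, the most symmetric factorization is
\[
A^*B=|A|^{\frac12}\,\bigl(|A|^{\frac12}U^*|B^*|^{\frac12}\bigr)\,|B^*|^{\frac12}V .
\]
This form is awkward, so I will work instead with square roots of the moduli and the identities
\[
A^* = |A|^{\frac12}U^*|A^*|^{\frac12},\qquad B=|B^*|^{\frac12}V|B|^{\frac12}.
\]
These hold because $U|A|^{\frac12}U^*=|A^*|^{\frac12}$ on the range of $U$, and $U|A|^{\frac12}=|A^*|^{\frac12}U$ for any polar partial isometry; in the matrix case we may take $U,V$ unitary.

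By cyclicity,
\[
\tr(A^*B)=\tr\bigl(|A|^{\frac12}U^*|A^*|^{\frac12}|B^*|^{\frac12}V|B|^{\frac12}\bigr)
=\tr\bigl(\,(|B|^{\frac12}|A|^{\frac12})\,U^*\,(|A^*|^{\frac12}|B^*|^{\frac12})\,V\bigr).
\]
Set $X:=|A|^{\frac12}|B|^{\frac12}$ and $Y:=|A^*|^{\frac12}|B^*|^{\frac12}$. Then $\tr(A^*B)=\tr(X^*U^*YV)=\langle X,U^*YV\rangle_{HS}$. Cauchy--Schwarz, together with the unitary invariance of the Frobenius norm, gives
\[
|\tr(A^*B)|\le \|X\|_2\,\|Y\|_2 .
\]
Finally,
\[
\|X\|_2^2=\tr\bigl(|B|^{\frac12}|A||B|^{\frac12}\bigr)=\tr(|A||B|),\qquad \|Y\|_2^2=\tr(|A^*||B^*|).
\]
Squaring yields \eqref{eq:trace-cs}, and \eqref{eq:trace-amgm} then follows from the AM--GM inequality $\sqrt{ab}\le\frac{a+b}{2}$, since both traces are nonnegative (each is the trace of a product of two positive semidefinite matrices).

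The only delicate step is justifying the intertwining relation $U|A|^{\frac12}=|A^*|^{\frac12}U$, which yields the factorization of $A^*$ and $B$ through square roots. In the matrix setting I will pick $U$ unitary with $A=U|A|$, so that $|A^*|=U|A|U^*$. Then $|A^*|^{\frac12}=U|A|^{\frac12}U^*$ by uniqueness of positive square roots, which gives the relation directly. The same argument applies to $V$. Everything else is routine.
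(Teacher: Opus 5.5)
Your proof is correct and is essentially the paper's proof. Since $|A^*|^{1/2}=U|A|^{1/2}U^*$ and $|B^*|^{1/2}=V|B|^{1/2}V^*$, you have $U^*YV=|A|^{1/2}U^*V|B|^{1/2}$, so your Cauchy--Schwarz pairing of $X^*=|B|^{1/2}|A|^{1/2}$ with $U^*YV$ is exactly the paper's splitting $\tr(|A|U^*V|B|)=\tr\bigl((|A|^{1/2}U^*V|B|^{1/2})(|B|^{1/2}|A|^{1/2})\bigr)$, followed by the same Frobenius-norm computations and AM--GM.
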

	
	\begin{proof}
		Let $A=U\absv{A}$ and $B=V\absv{B}$ be polar decompositions.
		Then 
		\begin{align*}
			|\tr(A^*B)|&=|\tr(\absv{A}U^*V\absv{B})| \\
			&=|\tr\Big((\absv{A}^\frac{1}{2}U^*V\absv{B}^\frac{1}{2})(\absv{B}^\frac{1}{2}\absv{A}^\frac{1}{2})\Big)|\\
			&\le\left\| \absv{A}^\frac{1}{2}U^*V\absv{B}^\frac{1}{2}\right\| _2\left\| \absv{B}^\frac{1}{2}\absv{A}^\frac{1}{2}\right\| _2\text{ (By Cauchy-Schwarz inequality)}\\
			&=\left( \tr(\absv{A^*}\,\absv{B^*})\right)^\frac{1}{2} \left( \tr(\absv{A}\,\absv{B})\right)^\frac{1}{2} 
		\end{align*}
which is equivalent to \eqref{eq:trace-cs}.

		The bound \eqref{eq:trace-amgm} follows from \eqref{eq:trace-cs} by the scalar AM--GM inequality.
	\end{proof}
		Lieb's theorem \cite[Theorem~1]{Lie73} asserts that the map $(X,Y)\mapsto \tr(X^\frac{1}{2}Y^\frac{1}{2})$ is jointly concave on $\Mn^+\times\Mn^+$, where $\Mn^+$ is the positive cone on $\Mn$. That is,
	\begin{lem}\label{lem:lieb}
		Let $X_1,X_2,Y_1,Y_2\in\Mn$ be positive semidefinite. Then
		\begin{equation*}
			\tr\!\left(\left(\frac{X_1+X_2}{2}\right)^\frac{1}{2}\left(\frac{Y_1+Y_2}{2}\right)^\frac{1}{2}\right)
			\ \ge\ \frac{\tr(X_1^\frac{1}{2}Y_1^\frac{1}{2})+\tr(X_2^\frac{1}{2}Y_2^\frac{1}{2})}{2}.
		\end{equation*}
	\end{lem}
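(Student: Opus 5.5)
We plan to deduce the inequality from the joint concavity of the operator geometric mean, applied on the tensor product $\C^n\otimes\C^n$. This is Ando's route to Lieb's theorem.

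For positive definite $P,Q$ we use the operator geometric mean $P\#Q:=P^{\frac12}\bigl(P^{-\frac12}QP^{-\frac12}\bigr)^{\frac12}P^{\frac12}$. The first fact we need is the standard extremal characterization: $P\#Q$ is the largest Hermitian $Z$ for which
\[
\begin{pmatrix}P&Z\\ Z&Q\end{pmatrix}\ge 0 .
\]
We will verify this with Lemma~\ref{lem:contraction} and a Schur-complement argument. The block matrix is positive exactly when $ZP^{-1}Z\le Q$. Conjugating by $P^{-\frac12}$, this is equivalent to $W^2\le P^{-\frac12}QP^{-\frac12}$ with $W:=P^{-\frac12}ZP^{-\frac12}$. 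Since $t\mapsto t^{\frac12}$ is operator monotone, this forces $W\le \bigl(P^{-\frac12}QP^{-\frac12}\bigr)^{\frac12}$, with equality attained.

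Joint superadditivity of $\#$ follows directly from this characterization. If $Z_i=P_i\#Q_i$ for $i=1,2$, then summing the two positive block matrices shows that $Z_1+Z_2$ is admissible for the pair $(P_1+P_2,\,Q_1+Q_2)$. Hence
\[
(P_1+P_2)\#(Q_1+Q_2)\ \ge\ P_1\#Q_1+P_2\#Q_2 .
\]
Since $\#$ is positively homogeneous of degree one, dividing by $2$ gives midpoint concavity.

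Next we specialize to commuting arguments. Take $P_i:=X_i\otimes I$ and $Q_i:=I\otimes Y_i^{\top}$ on $\C^n\otimes\C^n$. Because $P_i$ and $Q_i$ commute, $P_i\#Q_i=(P_iQ_i)^{\frac12}=X_i^{\frac12}\otimes (Y_i^{\top})^{\frac12}$. The same identity holds for the averaged pair, because $\tfrac{P_1+P_2}{2}=\tfrac{X_1+X_2}{2}\otimes I$ and $\tfrac{Q_1+Q_2}{2}=I\otimes\bigl(\tfrac{Y_1+Y_2}{2}\bigr)^{\top}$ still commute. The concavity of $\#$ therefore yields the operator inequality
\[
\Bigl(\tfrac{X_1+X_2}{2}\Bigr)^{\frac12}\otimes\Bigl(\tfrac{Y_1+Y_2}{2}\Bigr)^{\top\,\frac12}\ \ge\ \tfrac12\Bigl(X_1^{\frac12}\otimes (Y_1^{\top})^{\frac12}+X_2^{\frac12}\otimes (Y_2^{\top})^{\frac12}\Bigr).
\]
We then compress this inequality to the vector $\omega:=\sum_j e_j\otimes e_j$. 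The identity $\langle \omega,(A\otimes B^{\top})\omega\rangle=\tr(AB)$, together with $(Y^{\top})^{\frac12}=(Y^{\frac12})^{\top}$, turns each term into $\tr(X^{\frac12}Y^{\frac12})$. This is exactly the claimed inequality.

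The positive definiteness assumed in the construction of $\#$ is removed at the end. We replace each $X_i,Y_i$ by $X_i+\varepsilon I$ and $Y_i+\varepsilon I$ and let $\varepsilon\to0$, using the continuity of $t\mapsto t^{\frac12}$ on the positive semidefinite cone.

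The main obstacle is the extremal characterization of $P\#Q$. Everything else is formal bookkeeping, but that step requires the operator monotonicity of the square root used in the right direction. The index conventions in the $\omega$-trace identity also need care, including the transpose on the second tensor factor.
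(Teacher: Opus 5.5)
Your proof is correct, and it takes a different route from the paper. The paper gives no argument for this lemma: it quotes Lieb's concavity theorem (joint concavity of $(X,Y)\mapsto\tr(X^{\frac12}Y^{\frac12})$, [Lie73, Theorem~1]) and reads off the midpoint case.

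What you wrote is Ando's tensor-product proof of Lieb's theorem, specialized to exponent $\tfrac12$. The steps all check out:
\begin{itemize}
\item The maximal characterization of $P\#Q$ via the Schur complement holds. Your ``forces'' hides the small step $W\le |W|=(W^2)^{\frac12}\le (P^{-\frac12}QP^{-\frac12})^{\frac12}$, which is fine.
\item Superadditivity follows by summing the block matrices, and degree-one homogeneity turns it into midpoint concavity.
\item For the commuting pair $X\otimes I$, $I\otimes Y^{\top}$ one has $P\#Q=P^{\frac12}Q^{\frac12}$, and $(Y^{\top})^{\frac12}=(Y^{\frac12})^{\top}$.
\item The compression identity $\langle\omega,(A\otimes B)\omega\rangle=\sum_{j,k}A_{jk}B_{jk}=\tr(AB^{\top})$ holds, which is why the transpose on the second factor is needed.
\item The $\varepsilon$-regularization is consistent, since the averages shift by the same $\varepsilon I$.
\end{itemize}

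The citation buys brevity and far more generality than needed: all exponents $0\le t\le 1$ and an arbitrary middle factor $K$. Your route buys a self-contained proof using only tools the paper already relies on elsewhere: block positivity as in Lemma~\ref{lem:contraction}, and operator monotonicity of $t\mapsto t^{\frac12}$. It also produces a stronger intermediate statement, an operator inequality on $\C^n\otimes\C^n$, before the trace is taken.
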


Next, we present a sharp inequality for the Frobenius norm. Although it is subsumed by Theorem~\ref{thm:schatten-sum-vs-qsym-p}, the argument in this special case is particularly short and elegant.
	
	\begin{thm}\label{thm:frob-lee}
		Let $A_1,\dots,A_m\in\Mn$. Then
		\begin{equation}		\label{eq:frob-qsym}
			\hsnorm{\sum_{k=1}^m A_k}
			\le\ \hsnorm{\sum_{k=1}^m \qsma{A_k}},
		\end{equation}
		Moreover, the constant $1$ is sharp.
	\end{thm}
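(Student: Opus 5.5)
The plan is to prove \eqref{eq:frob-qsym} by duality for the Frobenius norm: write $S:=\sum_k A_k$, $Q:=\sum_k \qsma{A_k}$, and bound $\hsnorm{S}^2=\tr(S^*S)=\sum_{k}\tr(S^*A_k)$. Each term $|\tr(S^*A_k)|$ will be controlled by the trace inequality
\[
|\tr(A^*B)|\le \tr\bigl(\qsma{A}\,\qsma{B}\bigr),
\]
which is the heart of the argument. Granting it, we get
\[
\hsnorm{S}^2\le \sum_k \tr(\qsma{S}\qsma{A_k})=\tr(\qsma{S}\,Q)\le \hsnorm{\qsma{S}}\,\hsnorm{Q}
\]
by Cauchy--Schwarz. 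Moreover $\hsnorm{\qsma{S}}^2=\tr\frac{|S|^2+|S^*|^2}{2}=\hsnorm{S}^2$. Dividing by $\hsnorm{S}$ then yields \eqref{eq:frob-qsym}.

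To prove the trace inequality, start from \eqref{eq:trace-amgm} in Lemma~\ref{lem:trace-cs}:
\[
|\tr(A^*B)|\le \tfrac12\bigl(\tr(|A||B|)+\tr(|A^*||B^*|)\bigr).
\]
Writing $|A|=(|A|^2)^{1/2}$ and so on, the right-hand side is $\frac12\bigl(\tr(X_1^{1/2}Y_1^{1/2})+\tr(X_2^{1/2}Y_2^{1/2})\bigr)$ with $X_1=|A|^2$, $X_2=|A^*|^2$, $Y_1=|B|^2$, $Y_2=|B^*|^2$. Lieb's joint concavity (Lemma~\ref{lem:lieb}) bounds this by
\[
\tr\Bigl(\bigl(\tfrac{|A|^2+|A^*|^2}{2}\bigr)^{1/2}\bigl(\tfrac{|B|^2+|B^*|^2}{2}\bigr)^{1/2}\Bigr)=\tr(\qsma{A}\qsma{B}).
\]
This step is the main obstacle conceptually, but with Lemma~\ref{lem:trace-cs} and Lemma~\ref{lem:lieb} already available it reduces to matching the pairings correctly: $|A|$ must be paired with $|B|$ and $|A^*|$ with $|B^*|$, which is exactly the pairing \eqref{eq:trace-amgm} provides. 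We also note that $\tr(\qsma{S}\qsma{A_k})\ge 0$ since it is the trace of a product of two positive semidefinite matrices, so summing the absolute-value bounds is legitimate.

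For sharpness, any single normal matrix (or $m$ equal copies of a Hermitian positive matrix) has $\qsma{A}=|A|$, giving equality. For instance, take $A_1=\cdots=A_m=I_n$: both sides equal $m\sqrt n$. So the constant $1$ cannot be lowered.
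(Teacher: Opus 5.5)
Your proof is correct. It uses the same key ingredient as the paper, the trace inequality $|\tr(A^*B)|\le \tr(\qsma{A}\qsma{B})$ obtained from \eqref{eq:trace-amgm} and Lieb's concavity (recorded in the paper as Remark~\ref{rem:trace-qsym}). You deploy it differently, however.

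The paper expands $\hsnorm{\sum_k A_k}^2=\sum_k\hsnorm{A_k}^2+2\sum_{i<j}\Re\,\tr(A_i^*A_j)$. It applies the trace inequality only to the cross pairs $(A_i,A_j)$ with $i\ne j$, and uses $\hsnorm{\qsma{A_k}}=\hsnorm{A_k}$ on the diagonal terms. It then reassembles everything into $\hsnorm{\sum_k\qsma{A_k}}^2$. No Cauchy--Schwarz step is needed, and one gets the explicit nonnegative gap $2\sum_{i<j}\bigl(\tr(\qsma{A_i}\qsma{A_j})-\Re\,\tr(A_i^*A_j)\bigr)$, which is handy for studying equality cases.

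You instead test every summand against the full sum $S$, so the trace inequality is applied to the pairs $(S,A_k)$. You then close with Cauchy--Schwarz and the identity $\hsnorm{\qsma{S}}=\hsnorm{S}$. Your route is exactly the $p=2$ case of the duality argument the paper later uses for Theorem~\ref{thm:schatten-sum-vs-qsym-p}, with test matrix $S/\hsnorm{S}$ and $d_2=1$. Its advantage is that it extends verbatim to Schatten $p$-norms once Cauchy--Schwarz is replaced by H\"older and the single-matrix constant $d_q$ is computed. The paper's expansion, by contrast, is tied to the Hilbert-space structure of $\hsnorm{\cdot}$.

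One trivial point: treat $S=0$ separately before dividing by $\hsnorm{S}$. Your sharpness example with positive semidefinite summands is the same as the paper's.
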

	
	\begin{proof}
 Clearly,
		\begin{equation}\label{eq:Pk-frob}
			\|\qsma{A_k}\|_2^2=\|A_k\|_2^2.
		\end{equation}
		For any $i\neq j$, by using Lemma~\ref{lem:trace-cs} and Lemma~\ref{lem:lieb} with
		$X_1=\absv{A_i}^2$, $X_2=\absv{A_i^*}^2$, $Y_1=\absv{A_j}^2$ and $Y_2=\absv{A_j^*}^2$, we have
	\begin{align*}
			|\tr(A_i^*A_j)|
		&\le \frac{\tr(\absv{A_i}\,\absv{A_j})+\tr(\absv{A_i^*}\,|A_j^*|)}{2}\\
&\le \tr\!\left(\left(\frac{|A_i|^2+\absv{A_i^*}^2}{2}\right)^\frac{1}{2}\left(\frac{\absv{A_j}^2+|A_j^*|^2}{2}\right)^\frac{1}{2}\right)\\
&=\tr\left(\qsma{A_i}\qsma{A_j}\right). 
	\end{align*}
	Hence,
	\begin{align*}
		\|\sum_{k=1}^m A_k\|_2^2
		&=\sum_{k=1}^m \|A_k\|_2^2+2\sum_{i<j}\Re\,\tr(A_i^*A_j) \\
		&\le\sum_{k=1}^m \|A_k\|_2^2+2\sum_{i<j}\left| \tr(A_i^*A_j)\right| \\
		&\le
		\sum_{k=1}^m \|\qsma{A_k}\|_2^2+2\sum_{i<j}\tr\left(\qsma{A_i}\qsma{A_j}\right)\\
		&=\Big\|\sum_{k=1}^m \qsma{A_k}\Big\|_2^2,
	\end{align*}
		where we used \eqref{eq:Pk-frob}. Taking square roots yields \eqref{eq:frob-qsym}.
					
		\end{proof}
		
\begin{rem}\label{rem:trace-qsym}
	The above argument yields the following elegant trace inequality:
\begin{equation}\label{eq:trace-key-qsym}
		|\tr(A^*B)|\ \le\ \tr\!\big(|A|_{\mathrm{qsym}}\,|B|_{\mathrm{qsym}}\big),
	\qquad \text{for all } A,B\in\mathbb M_n.
\end{equation}
\end{rem}
Aujla and Silva~\cite{AS03} established the following inequality for unitarily invariant norms associated with convex functions.
\begin{lem}[{\cite[Corollary~2.6]{AS03}}]\label{lem:Aujla-Silva}
	Let $A,B\in \Mn$ be positive semidefinite and $f$ is a nonnegative convex function defined on $[0,\infty)$. Then for all $\alpha\in [0,1]$,
	\[
	\left\|f(\alpha A+(1-\alpha)B) \right\| \le 	\left\|\alpha f( A)+(1-\alpha)f(B) \right\|.
	\]
	In particular, for the trace norm, we have
	\[
\tr	f(\alpha A+(1-\alpha)B)\le  \tr \left( \alpha f( A)+(1-\alpha)f(B) \right).
	\]
\end{lem}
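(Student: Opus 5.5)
The plan is to establish a weak majorization between the eigenvalues of the two positive semidefinite matrices
\[
C_f:=f(\alpha A+(1-\alpha)B)\qquad\text{and}\qquad M:=\alpha f(A)+(1-\alpha)f(B).
\]
From it I would deduce $\|C_f\|_{(k)}\le\|M\|_{(k)}$ for every Ky Fan $k$-norm, and then conclude for every unitarily invariant norm by Fan's dominance theorem \cite[p.~93]{Bha97}, exactly as in the proofs of Theorem~\ref{thm:lee-optimal-n-less-m} and Lemma~\ref{lem:eqdiag-dom}.

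Since $f\ge 0$, both $C_f$ and $M$ are positive semidefinite. Hence their Ky Fan $k$-norms are the sums of their $k$ largest eigenvalues.

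\textbf{Step 1: reduce to the top eigenvalues of $C_f$.} Let $C:=\alpha A+(1-\alpha)B$, with spectral decomposition $C=\sum_i \mu_i u_iu_i^*$. The eigenvalues of $C_f$ are the numbers $f(\mu_i)$, with the same eigenvectors $u_i$. Since $f$ need not be monotone, I do not order by $\mu_i$. Instead, for fixed $k$, I choose indices $i_1,\dots,i_k$ at which the values $f(\mu_i)$ are the $k$ largest. Then
\[
\|C_f\|_{(k)}=\sum_{j=1}^k f(\mu_{i_j}),\qquad \mu_{i_j}=\langle Cu_{i_j},u_{i_j}\rangle .
\]

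\textbf{Step 2: apply convexity twice to each term.} For a unit vector $u$ I would first use the scalar convexity of $f$:
\[
f(\langle Cu,u\rangle)=f\bigl(\alpha\langle Au,u\rangle+(1-\alpha)\langle Bu,u\rangle\bigr)\le \alpha f(\langle Au,u\rangle)+(1-\alpha)f(\langle Bu,u\rangle).
\]
I would then use the Jensen-type inequality $f(\langle Au,u\rangle)\le\langle f(A)u,u\rangle$, valid for convex $f$, Hermitian $A$ and unit $u$. To justify it, write $A=\sum_l a_l w_lw_l^*$ and note that $\langle Au,u\rangle=\sum_l a_l|\langle u,w_l\rangle|^2$ is a convex combination, because the weights $|\langle u,w_l\rangle|^2$ sum to $1$. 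Scalar Jensen then gives $f\bigl(\sum_l a_l|\langle u,w_l\rangle|^2\bigr)\le\sum_l f(a_l)|\langle u,w_l\rangle|^2=\langle f(A)u,u\rangle$. Combining the two inequalities yields $f(\mu_{i_j})\le\langle Mu_{i_j},u_{i_j}\rangle$.

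\textbf{Step 3: sum and conclude.} Summing over $j$ and using Ky Fan's maximum principle for the orthonormal family $u_{i_1},\dots,u_{i_k}$ gives
\[
\|C_f\|_{(k)}\le\sum_{j=1}^k\langle Mu_{i_j},u_{i_j}\rangle\le\sum_{j=1}^k\lambda_j(M)=\|M\|_{(k)}.
\]
Fan dominance then gives the norm inequality for every unitarily invariant norm. The trace statement is the case $k=n$, which is also the trace norm case since both matrices are positive semidefinite.

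The only delicate point I expect is Step 1: because $f$ may be nonmonotone, the eigenvector selection must follow the ordering of $f(\mu_i)$ rather than of $\mu_i$. Once that choice is made, everything else is the single Jensen estimate of Step 2, and nonnegativity of $f$ is used only to identify Ky Fan norms with sums of eigenvalues.
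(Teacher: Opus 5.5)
Your proof is correct. Choosing eigenvectors of $\alpha A+(1-\alpha)B$ by the ordering of the values $f(\mu_i)$, then applying scalar convexity, the Jensen inequality $f(\langle Au,u\rangle)\le\langle f(A)u,u\rangle$ and Ky Fan's maximum principle, gives the weak majorization $\lambda\bigl(f(\alpha A+(1-\alpha)B)\bigr)\prec_w\lambda\bigl(\alpha f(A)+(1-\alpha)f(B)\bigr)$. Nonnegativity of $f$ is exactly what lets Fan dominance turn this into the norm inequality.

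The paper gives no proof of this lemma; it simply quotes it from Aujla and Silva. Your argument is essentially the standard weak-majorization proof behind that cited result, so it supplies a self-contained justification rather than a different route.
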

	\begin{proof}[Proof of Theorem~\ref{thm:schatten-sum-vs-qsym-p}]  Set
	\[
		S:=\sum_{k=1}^m A_k,\qquad
	Q:=\sum_{k=1}^m |A_k|_{\mathrm{qsym}}.
	\]
		Let $q$ be the conjugate index of $p$, i.e.\ $\frac1p+\frac1{q}=1$.
		We use the duality formula for Schatten $p$-norms:
		\begin{equation}\label{eq:dual-schatten}
			\|S\|_p=\sup\Big\{\,|\tr(S^*X)|:\ X\in\mathbb M_n,\ \|X\|_{q}\le 1\,\Big\}.
		\end{equation}
		
		\medskip
		\noindent\emph{Step 1: Reduction to a single-matrix constant.}
		Fix $X$ with $\|X\|_{q}\le 1$. Then we have
\begin{align*}
	|\tr(S^*X)|&= |\sum_{k=1}^m \tr(A_k^*X)|\\
	&\le \sum_{k=1}^m |\tr(A_k^*X)|\\
	&\le \sum_{k=1}^m \tr\big(|A_k|_{\mathrm{qsym}}\,|X|_{\mathrm{qsym}}\big) \text{ (By \eqref{eq:trace-key-qsym})}\\
	&= \tr\big(Q\,|X|_{\mathrm{qsym}}\big)\\
	&\le \|Q\|_p\,\||X|_{\mathrm{qsym}}\|_{q} \text{ (By H\"{o}lder inequality)}.
\end{align*}
		Taking the supremum over $\|X\|_{q}\le 1$ and using \eqref{eq:dual-schatten} gives
		\begin{equation}\label{eq:cp-reduction}
			\|S\|_p\ \le\ d_{q}\,\|Q\|_p,
			\qquad
			d_q:=\sup_{X\neq 0}\frac{\||X|_{\mathrm{qsym}}\|_q}{\|X\|_q}.
		\end{equation}
		It remains to compute $d_q$.
		
		\medskip
		\noindent\emph{Step 2: Computing $d_q$ (phase transition at $q=2$).}
		Write
		\[
		|X|_{\mathrm{qsym}}^2=\frac{|X|^2+|X^*|^2}{2}=\frac{B+C}{2},
		\qquad B:=|X|^2,\quad C:=|X^*|^2.
		\]
		Then
		\begin{equation}\label{eq:dq-expand}
			\||X|_{\mathrm{qsym}}\|_q^q
			=\tr\!\left(\left(\frac{B+C}{2}\right)^{\frac{q}{2}}\right).
		\end{equation}
		
		\smallskip
		\noindent\emph{Case 1: $q\ge 2$.}
	Since the function $t\mapsto t^\frac{q}{2}$ is  convex on $[0,\infty)$,  Lemma~\ref{lem:Aujla-Silva} yields
	\begin{equation}\label{eq:B-C-trace-convex}
			\tr \left(\frac{B+C}{2}\right)^\frac{q}{2} \le 	\tr \frac{B^\frac{q}{2}+C^\frac{q}{2}}{2}=\|X\|_q^q.
	\end{equation}
	Combining \eqref{eq:dq-expand} and \eqref{eq:B-C-trace-convex} gives
		\[
		\||X|_{\mathrm{qsym}}\|_q^q
		\le 
		\|X\|_q^q,
		\]
		which implies  $d_q\le 1$ for $q\ge 2$. 
		
		Take $X$ to be  normal, then $|X|_{\mathrm{qsym}}=|X|$, this means in \eqref{eq:cp-reduction} $$d_q=1, \qquad 2\le q\le \infty.$$  Therefore
		\[
		\Bigl\lVert \sum_{k=1}^m A_k \Bigr\rVert_p
		\ \le\ \,
		\Bigl\lVert \sum_{k=1}^m \bigl|A_k\bigr|_{\mathrm{qsym}} \Bigr\rVert_p, \quad\quad\quad\,\, 1\le p\le 2.
		\]
		
		\medskip
		\noindent\emph{Sharpness.} For $1\le p\le 2$, take each $A_k$ is positive semidefinite, then $|A_k|_{\mathrm{qsym}}=A_k$,
		giving equality.

		\medskip
		\noindent\emph{Case 2: $1\le q\le 2$.}
		Since $t\mapsto t^\frac{q}{2}$ is concave on $[0,\infty)$.
	by using Lemma~\ref{lem:Bourin-Uchiyama}, we have
		\begin{equation}\label{eq:rotfeld}
			\tr\big((B+C)^\frac{q}{2}\big)\ \le\ \tr(B^\frac{q}{2})+\tr(C^\frac{q}{2})=2\|X\|_q^q.
		\end{equation}
		Combining \eqref{eq:dq-expand} and \eqref{eq:rotfeld} gives
		\[
		\||X|_{\mathrm{qsym}}\|_q\le 2^{\frac1q-\frac12}\|X\|_q,
		\qquad 1\le q\le 2.
		\]
		Therefore $d_q\le 2^{\frac1q-\frac12}$ for $1\le q\le 2$. 
  Take $X=E_{12}\in\mathbb M_2$. Then
		$|E_{12}|_{\mathrm{qsym}}=\frac{\sqrt2}{2}I_2$ and $\|E_{12}\|_q=1$, so
		\[
		\frac{\||E_{12}|_{\mathrm{qsym}}\|_q}{\|E_{12}\|_q}
		=\Big(2\Big(\frac{\sqrt2}{2}\Big)^q\Big)^{1/q}
		=2^{\frac1q-\frac12}.
		\]
		Hence in \eqref{eq:cp-reduction},
		\begin{equation*}
			d_q=2^{\frac1q-\frac12},\qquad 1\le q\le 2.
		\end{equation*}
	Thus,
		\[
		\Bigl\lVert \sum_{k=1}^m A_k \Bigr\rVert_p
		\ \le\ 2^{\frac{1}{2}-\frac{1}{p}}\,
		\Bigl\lVert \sum_{k=1}^m \bigl|A_k\bigr|_{\mathrm{qsym}} \Bigr\rVert_p, \quad 2\le p\le \infty.
		\]
		
		\medskip\noindent\emph{Sharpness}.
		For $2\le p\le\infty$, take $n=2$ and $A_1=\cdots=A_m=E_{12}$. Then $\Bigl\lVert \sum_k A_k \Bigr\rVert_p=m$ while
		$|E_{12}|_{\mathrm{qsym}}=\frac{\sqrt2}{2}I_2$, so
		\[
\Bigl\lVert \sum_{k=1}^m \bigl|A_k\bigr|_{\mathrm{qsym}} \Bigr\rVert_p
		=\left\|\frac{m\sqrt2}{2}I_2\right\|_p
		=m\left(2\left(\frac{\sqrt2}{2}\right)^p\right)^{1/p}
		=m\,2^{\frac1p-\frac12},
		\]
	 showing that the constant $2^{\frac{1}{2}-\frac{1}{p}}$ cannot be improved.
	\end{proof}
	\section{Partial results on Problems~\ref{prob:schatten-sym-vs-sym-p} and \ref{prob:schatten-qsym-vs-qsym-p}}\label{sec:partial-schatten-p}
First, we state some	endpoint values and general bounds for $c_p^{\mathrm{sym}}(m,n)$ in Problem~\ref{prob:schatten-sym-vs-sym-p}.
	\begin{thm}\label{thm:cp-endpoints-bounds}
		Let $A_1,\dots,A_m\in\mathbb M_n$, and let $1\le p\le\infty$. Let $c_p^{\mathrm{sym}}(m,n)$ to be the smallest
		constant such that
		\[
		\Bigl\lVert \Bigl|\sum_{k=1}^m A_k\Bigr|_{\mathrm{sym}} \Bigr\rVert_p
		\ \le\ c_p^{\mathrm{sym}}(m,n)\,
		\Bigl\lVert \sum_{k=1}^m |A_k|_{\mathrm{sym}} \Bigr\rVert_p.
		\]
		Then the following statements hold:
		\begin{enumerate}
	\item		$c_1^{\mathrm{sym}}(m,n)=1$;
	\item  $	c_\infty^{\mathrm{sym}}(m,n)=\sqrt2$ for $n\ge 3$;
	\item $c_p^{\mathrm{sym}}(m,n)\le \min\{2^{1-\frac{1}{p}}, \sqrt{2}\}$;
	\item $	c_p^{\mathrm{sym}}(m,n)\ge \left(\frac{2^{\frac{p}{2}}+2^{1-\frac{p}{2}}}{3}\right)^{\frac{1}{p}}$;
in particular, the right-hand side increases from $1$ at $p=2$ to $\sqrt2$ as $p\to\infty$.
		\end{enumerate}
	\end{thm}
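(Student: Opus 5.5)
The four items are handled separately. Throughout we write $S:=\sum_{k=1}^m A_k$.

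\emph{Item (1).} For $p=1$ the trace does all the work. Since $\tr|S^*|=\tr|S|$, we have $\bigl\|\,|S|_{\mathrm{sym}}\bigr\|_1=\tr|S|=\|S\|_1$. The triangle inequality for the trace norm gives $\|S\|_1\le\sum_k\|A_k\|_1$. Moreover $\sum_k\|A_k\|_1=\sum_k\tr|A_k|_{\mathrm{sym}}=\bigl\|\sum_k|A_k|_{\mathrm{sym}}\bigr\|_1$, because each $|A_k|_{\mathrm{sym}}\ge0$ and $\tr|A_k^*|=\tr|A_k|$. Hence $c_1^{\mathrm{sym}}(m,n)\le1$. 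Equality holds for positive semidefinite $A_k$, since then every modulus equals $A_k$, so $c_1^{\mathrm{sym}}(m,n)=1$.

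\emph{Items (2) and (3).} The operator-norm value follows from two earlier results. Theorem~\ref{thm:Bourin-Lee}, applied with the operator norm, gives the upper bound $\sqrt2$. Theorem~\ref{thm:sharp} provides $A,B\in\mathbb M_3$, padded by zero blocks for $n>3$ and completed with $A_3=\cdots=A_m=0$, attaining the ratio $\sqrt2$. Thus $c_\infty^{\mathrm{sym}}(m,n)=\sqrt2$ for $n\ge3$.

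For (3), Theorem~\ref{thm:Bourin-Lee} with the Schatten $p$-norm again gives the bound $\sqrt2$. For the other bound, we chain two earlier theorems:
\[
\bigl\|\,|S|_{\mathrm{sym}}\bigr\|_p\le\|\,|S|\,\|_p=\|S\|_p\le 2^{1-\frac1p}\Bigl\|\sum_{k=1}^m|A_k|_{\mathrm{sym}}\Bigr\|_p .
\]
The first inequality is the upper bound in Theorem~\ref{thm:abs-vs-sym}, and the second is Theorem~\ref{thm:schatten-sum-vs-sym-p}. Taking the minimum of the two bounds gives (3).

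\emph{Item (4).} For the lower bound we reuse the extremal pair from the proof of Theorem~\ref{thm:sharp}, where
\[
|A+B|_{\mathrm{sym}}=\diag\!\left(\tfrac{\sqrt2}{3},\tfrac{2\sqrt2}{3},\tfrac{\sqrt2}{3}\right),\qquad |A|_{\mathrm{sym}}+|B|_{\mathrm{sym}}=\tfrac23 I_3 .
\]
Computing both $p$-th powers of the Schatten norms, the ratio raised to the $p$-th power is
\[
\frac{(\sqrt2/3)^p\,(2+2^p)}{3\,(2/3)^p}=\frac{2^{\frac p2}\,(2+2^p)}{3\cdot 2^p}=\frac{2^{\frac p2}+2^{1-\frac p2}}{3}.
\]
Taking $p$-th roots gives the stated lower bound. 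Padding with zeros covers every $n\ge3$ and every $m\ge2$.

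It remains to check the monotonicity remark. Let $g(p)=2^{p/2}+2^{1-p/2}$. This function is symmetric about $p=1$ and increasing for $p>1$, and $g(2)=3$, so the right-hand side equals $1$ at $p=2$. As $p\to\infty$, the expression behaves like $(2^{p/2}/3)^{1/p}\to\sqrt2$. One must also confirm that the $p$-th root preserves monotonicity: the $p$-th power $g(p)/3$ is at least $1$ and increasing for $p\ge2$, so we check this by differentiating $\frac1p\log(g(p)/3)$.

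The only real obstacle is bookkeeping. We must ensure Theorem~\ref{thm:sharp} indeed applies for all $m$ (by zero summands), and we must keep the constraint $n\ge3$ in items (2) and (4). For $n=2$, item (4) is not claimed by this example, and the plan would state it for $n\ge3$ as the paper's context implies.
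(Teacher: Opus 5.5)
Your proposal is correct and follows the paper's proof essentially step for step: the trace identity $\|\,|S|_{\mathrm{sym}}\|_1=\|S\|_1$ plus the triangle inequality for (1); Theorems~\ref{thm:Bourin-Lee} and~\ref{thm:sharp} for (2); the chain through Theorem~\ref{thm:schatten-sum-vs-sym-p}, combined with Bourin--Lee, for (3); and the same $3\times3$ rank-one pair for (4), where, like the paper, you rightly obtain the lower bound only for $n\ge3$. The monotonicity remark, which you leave as a plan and the paper does not prove, goes through as you suggest: with $x=2^{p/2}$, the sign of $\frac{d}{dp}\bigl[\frac1p\log\frac{g(p)}{3}\bigr]$ is that of $F(x)=\frac{2x^2\ln x}{x^2+2}-\ln\frac{x^2+2}{3}$, and $F(1)=0$ while $F'(x)=\frac{8x\ln x}{(x^2+2)^2}>0$ for $x>1$.
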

	\begin{proof} \emph{Step 1: $c_1^{\mathrm{sym}}(m,n)=1$.}
For each $k$, we have
\[
\||A_k|_{\mathrm{sym}}\|_1=\|A_k\|_1.
\]
		Hence
		\[
		\Bigl\|\Bigl|\sum_{k=1}^m A_k\Bigr|_{\mathrm{sym}}\Bigr\|_1
		=\Bigl\|\sum_{k=1}^m A_k\Bigr\|_1
		\le \sum_{k=1}^m \|A_k\|_1
		=\sum_{k=1}^m \||A_k|_{\mathrm{sym}}\|_1
		=\Bigl\|\sum_{k=1}^m |A_k|_{\mathrm{sym}}\Bigr\|_1.
		\]
		Thus $c_1^{\mathrm{sym}}(m,n)\le1$. Sharpness follows by taking $A_1\neq0$ and $A_2=\cdots=A_m=0$.
		
		\medskip
		\noindent\emph{Step 2: $c_\infty^{\mathrm{sym}}(m,n)=\sqrt{2}$ for $n\ge 3$.}
		The bound $c_\infty^{\mathrm{sym}}(m,n)\le\sqrt2$ is a special case of Theorem~\ref{thm:Bourin-Lee}.
Sharpness follows by Theorem~\ref{thm:sharp}.
		
	\medskip
	\noindent\emph{Step 3. A universal upper bound.}  Notice that 
	\begin{align*}
		\Bigl\|\Bigl|\sum_{k=1}^m A_k\Bigr|_{\mathrm{sym}}\Bigr\|_p&= \frac{1}{2}\Bigl\|\Bigl|\sum_{k=1}^m A_k\Bigr|+\Bigl|\big(\sum_{k=1}^m A_k\big)^*\Bigr|\Bigr\|_p\\
		&\le 	\Bigl\|\sum_{k=1}^m A_k\Bigr\|_p \text{ (By triangle inequality for $\|\cdot\|_p$)}\\
		&\le 2^{1-\frac{1}{p}}\Bigl\|\sum_{k=1}^m |A_k|_{\mathrm{sym}}\Bigr\|_p\text{ (By Theorem~\ref{thm:schatten-sum-vs-sym-p})}.
	\end{align*}
		This gives $c_p^{\mathrm{sym}}(m,n)\le 2^{1-\frac{1}{p}}$. Combining it with Theorem~\ref{thm:Bourin-Lee} gives the claim.
		
		\medskip
		\noindent\emph{Step 4: a lower bound on $c_p^{\mathrm{sym}}(m,n)$ for $n\ge 3$ via an explicit $3\times3$ construction.} 
	Let $n=3$ and consider the unit vectors
		\[
		u=\frac1{\sqrt3}\begin{pmatrix}1\\1\\1\end{pmatrix},\quad
		v=\frac1{\sqrt3}\begin{pmatrix}1\\-1\\-1\end{pmatrix},\quad
		x=\frac1{\sqrt3}\begin{pmatrix}-1\\1\\-1\end{pmatrix},\quad
		y=\frac1{\sqrt3}\begin{pmatrix}1\\1\\-1\end{pmatrix},
		\]
		and define rank-one matrices $A:=uv^\top$ and $B:=xy^\top$ (real matrices).
		A direct computation shows
		\[
		|A|_{\mathrm{sym}}+|B|_{\mathrm{sym}}=\frac23\,I_3,
		\]
		while $|A+B|_{\mathrm{sym}}$ has eigenvalues
		\[
		\frac23\cdot\sqrt2,\quad \frac23\cdot\frac1{\sqrt2},\quad \frac23\cdot\frac1{\sqrt2}.
		\]
		Therefore, for every $p\ge1$,
	\[
	\frac{\left\|\,|A+B|_{\mathrm{sym}}\,\right\|_p}
	{\left\|\,|A|_{\mathrm{sym}}+|B|_{\mathrm{sym}}\,\right\|_p}
	=
	\frac{\frac{2}{3}\,\bigl(2^{\frac{p}{2}}+2^{1-\frac{p}{2}}\bigr)^{\frac{1}{p}}}
	{\frac{2}{3}\,3^{\frac{1}{p}}}
	=
	\left(\frac{2^{\frac{p}{2}}+2^{1-\frac{p}{2}}}{3}\right)^{\frac{1}{p}}.
	\]
	\end{proof}
For completeness, we record a concrete $2\times 2$ counterexample showing that the Frobenius-norm inequality
\[
\|\,|A+B|_{\mathrm{sym}}\|_2 \le \|\,|A|_{\mathrm{sym}}+|B|_{\mathrm{sym}}\|_2
\]
fails in general.

In particular, this provides an explicit $2\times 2$ witness that the optimal constant in
Problem~\ref{prob:schatten-sym-vs-sym-p} satisfies $c_2^{\mathrm{sym}}(m,n)>1$ for every $n\ge 2$ (by embedding the
$2\times2$ example into $\mathbb M_n$ as a direct summand).
	\begin{eg}\label{ex:c2gt1}
		Let
		\[
		A=\begin{pmatrix}0&-1\\[2pt]1&-4\end{pmatrix},
		\qquad
		B=\begin{pmatrix}-16&-7\\[2pt]9&4\end{pmatrix}.
		\]
		Then a direct numerical evaluation gives
		\[
		\frac{\bigl\||A+B|_{\mathrm{sym}}\bigr\|_2}{\bigl\||A|_{\mathrm{sym}}+|B|_{\mathrm{sym}}\bigr\|_2}
		\approx 1.0144\ >\ 1.
		\]
		Hence $c_2^{\mathrm{sym}}(m,2)>1$, and therefore $c_2^{\mathrm{sym}}(m,n)>1$ for every $n\ge2$ (by embedding $2\times2$
		matrices into $\mathbb M_n$ as a direct summand).
	\end{eg}
	
	Next, we state some	endpoint values and general bounds for $c_p^{\mathrm{qsym}}(m,n)$ in Problem~\ref{prob:schatten-qsym-vs-qsym-p}.
	
		\begin{thm}
		Let $A_1,\dots,A_m\in\mathbb M_n$, and let $1\le p\le\infty$. Let $c_p^{\mathrm{qsym}}(m,n)$ to be the smallest
		constant such that
		\[
		\Bigl\lVert \Bigl|\sum_{k=1}^m A_k\Bigr|_{\mathrm{qsym}} \Bigr\rVert_p
		\ \le\ c_p^{\mathrm{qsym}}(m,n)\,
		\Bigl\lVert \sum_{k=1}^m |A_k|_{\mathrm{qsym}} \Bigr\rVert_p.
		\]
		Then the following statements hold:
\begin{enumerate}
	\item $c_{1}^{\mathrm{qsym}}(m,n)=1$;
	\item $c_{2}^{\mathrm{qsym}}(m,n)\le \sqrt{\frac{1+\sqrt{\min\{m,2n\}}}{2}}$;
	\item $c_{\infty}^{\mathrm{qsym}}(m,n)\le \sqrt{\min\{m,2n\}}$;
	\item $c_{p}^{\mathrm{qsym}}(m,n)\le \bigl(\min\{m,2n\}\bigr)^{\frac12-\frac{1}{2p}}$ for all $1\le p\le\infty$.
\end{enumerate}
	\end{thm}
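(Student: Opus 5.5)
The plan is to transfer everything to the usual-modulus problem through the embedding $\Phi$ used in the proof of Theorem~\ref{thm:qsym-Lee}. For $A\in\Mn$ set $\Phi(A)=\frac1{\sqrt2}\begin{pmatrix}A&0\\A^*&0\end{pmatrix}\in\mathbb M_{2n}$. The two facts we rely on are the additivity of $\Phi$ and the identity $|\Phi(A)|=|A|_{\mathrm{qsym}}\oplus 0_n$ from \eqref{eq:Phi-abs}. Since Schatten norms ignore zero direct summands, we have
\[
\Bigl\|\Bigl|\sum_k A_k\Bigr|_{\mathrm{qsym}}\Bigr\|_p=\Bigl\|\sum_k\Phi(A_k)\Bigr\|_p
\quad\text{and}\quad
\Bigl\|\sum_k|A_k|_{\mathrm{qsym}}\Bigr\|_p=\Bigl\|\sum_k|\Phi(A_k)|\Bigr\|_p .
\]
Consequently $c_p^{\mathrm{qsym}}(m,n)\le c_p^{\mathrm{abs}}(m,2n)$ for every $1\le p\le\infty$. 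This reduction is the whole mechanism, and it works for every unitarily invariant norm, just as in the proof of Theorem~\ref{thm:qsym-Lee}.

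Items (2)–(4) then follow directly from the results of Section~\ref{sec:partial-results-general-lee} applied in dimension $2n$.
\begin{itemize}
\item Item (2) comes from Theorem~\ref{thm:c2-minmn} with $d=\min\{m,2n\}$.
\item Item (4) comes from Proposition~\ref{prop:cp-min}, again with $d=\min\{m,2n\}$.
\item Item (3) is the case $p=\infty$ of (4). It also follows directly from Theorem~\ref{thm:Lee-optimal} applied to the operator norm in $\mathbb M_{2n}$, which is exactly Theorem~\ref{thm:qsym-Lee} specialized to $\|\cdot\|_\infty$.
\end{itemize}

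For item (1), I would not use the embedding, although $d^{0}=1$ gives the upper bound anyway. The direct argument mirrors Step~1 of Theorem~\ref{thm:cp-endpoints-bounds}. We have $\||Z|_{\mathrm{qsym}}\|_1\ge \|Z\|_1$: this holds because $\tr|Z|_{\mathrm{qsym}}\ge\tr|Z|_{\mathrm{sym}}=\|Z\|_1$ by the lower bound in Theorem~\ref{thm:sym-vs-qsym}. Hence the upper bound $c_1^{\mathrm{qsym}}\le 1$ is obtained already via $\Phi$ and Proposition~\ref{prop:cp-min} at $p=1$. It is also consistent with Theorem~\ref{thm:schatten-sum-vs-qsym-p} at $p=1$, combined with the upper bound $\||S|_{\mathrm{qsym}}\|_1\le$ ? — but that route needs $\||S|_{\mathrm{qsym}}\|_1\le\|S\|_1$, which is false (for instance $E_{12}$ gives $\sqrt2$). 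So the embedding route is the one to use for the upper bound. For the lower bound $c_1^{\mathrm{qsym}}\ge1$, take $A_1\ne0$ and $A_2=\dots=A_m=0$, which gives equality.

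There is no real obstacle beyond checking \eqref{eq:Phi-abs}. We have $\Phi(A)^*\Phi(A)=\frac12(A^*A+AA^*)\oplus 0=|A|_{\mathrm{qsym}}^2\oplus0$, so its square root is $|A|_{\mathrm{qsym}}\oplus 0$. The remaining care is to keep track that the dimension parameter becomes $2n$, which is why $\min\{m,2n\}$ appears in items (2)–(4).
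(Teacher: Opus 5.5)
Your proposal is correct and follows the paper's proof. The paper also uses the embedding $\Phi(A)=\frac{1}{\sqrt2}\begin{pmatrix}A&0\\A^*&0\end{pmatrix}$ with $|\Phi(A)|=|A|_{\mathrm{qsym}}\oplus 0$ to get $c_p^{\mathrm{qsym}}(m,n)\le c_p^{\mathrm{abs}}(m,2n)$, and then reads off items (1)--(4) from the bounds of Section~\ref{sec:partial-results-general-lee} in dimension $2n$, with $A_2=\cdots=A_m=0$ giving $c_1^{\mathrm{qsym}}\ge 1$. In your item (1) paragraph, drop the detour through $\||Z|_{\mathrm{qsym}}\|_1\ge\|Z\|_1$: that inequality points the wrong way for an upper bound, and the embedding alone settles it, since $\tr|Z|_{\mathrm{qsym}}=\|\Phi(Z)\|_1$ is subadditive.
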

	\begin{proof}
			For $A\in\mathbb M_n$, define the $2n\times 2n$ matrix
		\[
		\Phi(A):=\frac1{\sqrt2}\begin{pmatrix}A&0\\ A^*&0\end{pmatrix}.
		\]
		A direct computation shows
		\begin{equation}\label{eq:Phi-abs-p}
			|\Phi(A)|=\begin{pmatrix}|A|_{\mathrm{qsym}}&0\\0&0\end{pmatrix}.
		\end{equation}
		Clearly, $	\Phi$ is additive, that is,
		\begin{equation}\label{eq:Phi-additive-p}
			\Phi(\sum_{k=1}^m A_k)=\sum_{k=1}^m \Phi(A_k).
		\end{equation}
		
		Let $c_p^{\mathrm{abs}}(m,n)$ be the optimal constant in Problem~\ref{prob:general-lee-problem}.
		Applying it to the matrices $\Phi(A_1),\dots$, $\Phi(A_m)\in\mathbb M_{2n}$ gives
		\begin{equation}\label{eq:Phi-Lee-inequality-p}
			\Bigl\|\sum_{k=1}^m \Phi(A_k)\Bigr\|_p
			\le c_p^{\mathrm{abs}}(m,2n) \Bigl\|\sum_{k=1}^m |\Phi(A_k)|\Bigr\|_p.
		\end{equation}
		Thus, 
		\begin{align*}
			\left\| \begin{pmatrix}|\sum_{k=1}^m A_k|_{\mathrm{qsym}}&0\\0&0\end{pmatrix}\right\|_p &=
			\bigl\||\Phi(\sum_{k=1}^m A_k)|\bigr\|_p&\text{(By \eqref{eq:Phi-abs-p})}\\
			&=	\bigl\|\Phi(\sum_{k=1}^m A_k)\bigr\|_p\\
			&=
			\bigl\|\sum_{k=1}^m \Phi(A_k)\bigr\|_p&\text{(By \eqref{eq:Phi-additive-p})}\\
			&\le c_p^{\mathrm{abs}}(m,2n)  \Bigl\|\sum_{k=1}^m |\Phi(A_k)|\Bigr\|_p&\text{(By \eqref{eq:Phi-Lee-inequality-p})}\\
			&=c_p^{\mathrm{abs}}(m,2n)  \Bigl\|\sum_{k=1}^m
			\begin{pmatrix}|A_k|_{\mathrm{qsym}}&0\\0&0\end{pmatrix}\Bigr\|_p &\text{(By \eqref{eq:Phi-abs-p})}\\
			&=c_p^{\mathrm{abs}}(m,2n)  \Bigl\|
			\begin{pmatrix}\sum_{k=1}^m |A_k|_{\mathrm{qsym}}&0\\0&0\end{pmatrix}\Bigr\|_p.
		\end{align*}

		Finally, for any unitarily invariant norm, adjoining a zero direct summand does not change the norm
		(since it only appends zero singular values). Therefore,
		\[
		\left\| \begin{pmatrix}|\sum_{k=1}^m A_k|_{\mathrm{qsym}}&0\\0&0\end{pmatrix}\right\|_p
		=\bigl\||\sum_{k=1}^m A_k|_{\mathrm{qsym}}\bigr\|_p,
		\qquad
		\Bigl\|
		\begin{pmatrix}\sum_{k=1}^m |A_k|_{\mathrm{qsym}}&0\\0&0\end{pmatrix}\Bigr\|_p
		=\Bigl\|\sum_{k=1}^m |A_k|_{\mathrm{qsym}}\Bigr\|_p.
		\]
		Combining these inequalities yields
		\[
		\Bigl\lVert \Bigl|\sum_{k=1}^m A_k\Bigr|_{\mathrm{qsym}} \Bigr\rVert_p
		\le c_p^{\mathrm{abs}}(m,2n) \,
		\Bigl\lVert \sum_{k=1}^m \bigl|A_k\bigr|_{\mathrm{qsym}} \Bigr\rVert_p.
		\]
		Thus, $c_p^{\mathrm{qsym}}(m,n)\le c_p^{\mathrm{abs}}(m,2n)$. Applying partial results on Problem~\ref{prob:general-lee-problem} yields our desired result.
	\end{proof}

	\section*{Declaration of competing interest}
	The author declares no competing interests.
	
	\section*{Data availability}
	No data was used for the research described in the article.
	
	\section*{Acknowledgments}
	Teng Zhang is supported by the China Scholarship Council, the Young Elite Scientists Sponsorship Program for PhD Students (China Association for Science and Technology), and the Fundamental Research Funds for the Central Universities at Xi'an Jiaotong University (Grant No.~xzy022024045).

\end{document}